\crefname{equation}{}{}
\crefname{observation}{Observation}{Observations}
\crefname{claim}{Claim}{Claims}
\numberwithin{equation}{section}
\theoremstyle{plain}
\newtheorem{theorem}{Theorem}[section]
\newtheorem{corollary}[theorem]{Corollary}
\newtheorem{proposition}[theorem]{Proposition}
\newtheorem{lemma}[theorem]{Lemma}
\newtheorem{claim}[theorem]{Claim}
\newtheorem{observation}[theorem]{Observation}
\theoremstyle{definition}
\newtheorem{remark}[theorem]{Remark}
\newtheorem{definition}[theorem]{Definition}
\newtheorem{def/prop}[theorem]{Definition/Proposition}
\newcommand*{\PP}{\mathbb{P}}  
\newcommand*{\veps}{\varepsilon}                                 
\newcommand*{\EE}{\mathbb{E}}                                     
\newcommand{\abs}[1]{\left\lvert#1\right\rvert}
\newcommand*{\FF}{\mathbb{F}}  
\newcommand*{\parity}{\textup{par}}
\newcommand*{\cJ}{\mathcal{J}}
\newcommand{\cP}{\mathcal P}
\newcommand{\cE}{\mathcal E}
\newcommand{\NN}{\mathbb N}
\newcommand{\cI}{\mathcal{I}}
\newcommand{\cQ}{\mathcal{Q}}
\newcommand{\ptrU}{\mathfrak{p}}
\newcommand{\ptrW}{\mathfrak{q}}
\renewcommand{\P}{\mathbb{P}}
\newenvironment{proof*}[1][\proofname]{
  
  \begin{proof}[#1]}{\end{proof}}
\newcommand{\lpr}[1]{\left(#1\right)}
\newcommand{\lbr}[1]{\left[#1\right]}
\newcommand{\lcr}[1]{\left\{#1\right\}}
\newcommand{\floor}[1]{\lfloor#1\rfloor}
\newcommand{\labs}[1]{\left|#1\right|}
\newcommand{\bin}{\{0,1\}}
\newcommand{\eps}{\varepsilon}
\newcommand{\supp}{\textup{supp}}
\newcommand{\lmid}{\,\middle\vert\,}
\newcommand\coin{
\mathrel{\ooalign{\hss$\bigcirc$\hss\cr\kern0.7ex\hbox{\scalebox{0.8}{$\$$}}}}}
\newcommand\thankssymb[1]{\textsuperscript{\@fnsymbol{#1}}}
\title{Even-degeneracy of a random graph}
\author[Ting-Wei Chao]{Ting-Wei Chao\thankssymb{1}}
\author[Dingding Dong]{Dingding Dong\thankssymb{2}}
\author[Zixuan Xu]{Zixuan Xu\thankssymb{1}}
\thanks{\thankssymb{1}Department of Mathematics, Massachusetts Institute of Technology, Cambridge, MA, USA. Email: {\tt \{twchao, zixuanxu\}@mit.edu}}
\thanks{\thankssymb{2}Department of Mathematics, Harvard University, Cambridge, MA, USA. Email: {\tt ddong@math.harvard.edu}}
\begin{document}

\begin{abstract}
A graph is even-degenerate if one can iteratively remove a vertex of even degree at each step until at most one edge remains. Recently, Janzer and Yip showed that the Erd\H{o}s--R\'{e}nyi random graph $G(n,1/2)$ is even-degenerate with high probability, and asked whether an analogous result holds for any general $G(n,p)$. In this paper, we answer this question for any constant $p\in (0,1)$ in affirmation by proving that $G(n,p)$ is even-degenerate with high probability.
\end{abstract}

\maketitle

\section{Introduction}

For a graph $G$ on $n$ vertices, we say that $G$ is \emph{even-degenerate} if there exists an ordering $v_1,\dots,v_n$ of its vertices such that for every $1\leq i\leq n-2$, $v_i$ has an even number of neighbors in the set $\{v_{i+1},\dots,v_n\}$. In other words, $G$ is even-degenerate if one can sequentially remove its vertices such that every step removes an even number of edges, until at most two vertices remain.

Note that the parity of the number of remaining edges in the graph never changes during this removal. This means that when the edge number $e(G)$ of $G$ is even, one can legally remove every vertex of the graph; but when $e(G)$ is odd, there must be a leftover edge that is not legally removable. Therefore the property of being even-degenerate does not require $e(G)$ to be even. 

In the case of $e(G)$ being even, an even-degenerate graph is in fact \emph{even-decomposable}, a graph property previously studied by Versteegen \cite{Ver25} and Janzer and Yip \cite{JY24} due to motivations coming from the study of graph codes initiated by Alon, Gujgiczer, K\"orner, Milojevi\'c, and Simonyi~\cite{AlonGKMS23,Alon24}. A graph $G$ is \emph{even-decomposable} if there exists a sequence of vertex subsets $V(G)=V_1\supseteq V_2\supseteq \dots \supseteq V_k=\varnothing$ such that for every $1\leq i\leq k-1$, $G[V_i]$ has an even number of edges and $V_i\setminus V_{i+1}$ is an independent set in $G$.

Motivated by applications in estimating the sizes of linear graph codes, Versteegen showed that for $G\sim G(n,1/2)$, the probability that $G$ is not even-decomposable is at most $e^{-\Omega(\sqrt{\log n})}$. The bounds are later improved by Janzer and Yip \cite{JY24} to $e^{-\Omega(n^2)}$. Janzer and Yip also determined the threshold $p(n)$ for which $G\sim G(n,p(n))$ is very likely to be even-decomposable. In particular, they showed the following.

\begin{theorem}[{\cite[Theorem 1.10 and Proposition 1.11]{JY24}}]
    There exist constants $c_1,c_2 > 0$ such that for any $p\le 1-\frac{c_1}{n}$ and $G\sim G(n,p)$,
    \[\PP[G \text{ is not even-decomposable}\mid e(G)\text{ is even}\,] = o(1).\]
    For any $p \ge 1-\frac{c_2}{n}$ and $G\sim G(n,p)$,
    \[\PP[G \text{ is even-decomposable}\,] = o(1).\]
\end{theorem}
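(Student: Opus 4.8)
The statement has two halves of rather different flavors; I treat them separately.

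\smallskip
\noindent\textbf{The range $p\ge 1-c_2/n$ (non-decomposability).} The plan is to produce an explicit obstruction from vertices of full degree. Call $v\in V(G)$ \emph{universal} if it is adjacent to every other vertex, and let $W$ be the set of universal vertices, $m:=|W|$. First comes a deterministic claim: if $G$ is even-decomposable then $m\le\lceil n/2\rceil$. Fix a decomposition $V=V_1\supseteq\cdots\supseteq V_k=\varnothing$, which we may assume has all containments strict. Each universal vertex $v$ lies in some part $V_i\setminus V_{i+1}$; this part is independent while $v$ is adjacent to everything, so $V_i\setminus V_{i+1}=\{v\}$, whence $e(G[V_{i+1}])=e(G[V_i])-(|V_i|-1)$, and since both are even, $|V_i|$ is odd. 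Distinct universal vertices occur at distinct steps, hence at distinct odd values $|V_i|\in\{1,3,5,\dots\}$, of which there are at most $\lceil n/2\rceil$. Second comes the probabilistic input: for any constant $c_2<\ln 2$ we have $m>\lceil n/2\rceil$ w.h.p.\ when $p\ge 1-c_2/n$. Indeed $v$ is universal iff it is isolated in the complement $\overline G\sim G(n,1-p)$, so $\EE[m]=np^{n-1}\ge n(1-c_2/n)^{n-1}=(1+o(1))e^{-c_2}n$, and a second-moment estimate gives $\operatorname{Var}(m)=O(n)$ (each pairwise covariance is $p^{2n-3}-p^{2n-2}=p^{2n-3}(1-p)=O(1/n)$, over $O(n^2)$ pairs), so Chebyshev yields $m=(1+o(1))e^{-c_2}n$ w.h.p. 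Since $e^{-c_2}>1/2$, this exceeds $\lceil n/2\rceil$ for large $n$, contradicting even-decomposability.

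\smallskip
\noindent\textbf{The range $p\le 1-c_1/n$ (decomposability).} The structural engine here is the observation that, if one repeatedly deletes a nonempty independent set with even edge-boundary (so the number of edges stays even; when $e(G)$ is odd one instead peels down to a single edge, with no essential change), the \emph{only} configuration at which one can get stuck is a complete graph $K_s$ with $s$ even: if a graph $H$ with $\ge 2$ edges admits no such deletion, then considering singletons forces all degrees odd, and any non-adjacent pair would have even boundary, so $H$ is complete, hence $K_s$ with $s$ even. Thus the peeling runs to completion as long as the current graph never becomes such an even complete graph. For $p$ bounded away from $1$ this is immediate in bulk, since w.h.p.\ $\omega(G)=O(\log n)$, so every induced subgraph on more than $\omega(G)$ vertices is non-complete and greedy peeling proceeds unobstructed until only $O(\log n)$ vertices remain. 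For the complementary subrange $p\to 1$ (where $\omega(G)=\Theta(n)$) the plan is instead to track the sparse complement $\overline G\sim G(n,1-p)$, which still has $\Omega(n)$ edges, and to delete independent sets (which are $\overline G$-cliques, hence small) without ever exhausting the edges of $\overline G$, so the current graph always retains a non-edge; distributing the edge-destroying deletions across the $\Theta(n)$ steps is the technical content of that subrange.

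\smallskip
\noindent\textbf{The main obstacle.} The genuinely hard part is closing out the last $O(\log n)$ vertices. A random graph \emph{does} contain induced copies of every fixed small graph, including the ``cursed'' ones --- $K_s$ with $s$ even, $K_s$ minus an edge for suitable $s$, and so on --- from which every legal peeling is forced down into an even complete graph, so one cannot argue that bad configurations are never present, only that they are never \emph{reached}. I would formalize this as a robustness statement for $G\sim G(n,p)$: w.h.p.\ from every reachable non-complete induced subgraph on more than a constant number of vertices there is an independent-set deletion, keeping the edge count even, to another non-complete (or edgeless) induced subgraph. Establishing this amounts to a careful case analysis organized by the structure of $\overline G$ on the current vertex set --- a single edge, two incident edges, two disjoint edges, a longer path, a larger matching --- together with anticoncentration of the degree-parity vector to kill the few genuinely degenerate patterns, all carried out uniformly down to a constant number of vertices. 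The constant-$p$ regime of the present paper is the cleanest instance of this, since there both $p$ and $1-p$ are $\Theta(1)$ and none of the parity statistics involved can degenerate.
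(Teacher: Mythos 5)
This statement is quoted from Janzer--Yip \cite{JY24} and is not proved anywhere in the present paper, so there is no in-paper proof to compare against; I can only assess your argument on its own terms. Your first half (the range $p\ge 1-c_2/n$) is essentially a complete and correct proof: the deterministic observation that a universal vertex must be removed alone, at a step where $|V_i|$ is odd, and that the strictly decreasing sizes $|V_i|$ give at most $\lceil n/2\rceil$ such steps, combined with the second-moment estimate showing that for $c_2<\ln 2$ the number of universal vertices exceeds $\lceil n/2\rceil$ w.h.p., does yield non-even-decomposability. (One caveat you inherit from the statement itself: under the definition given in this paper, $e(G[V_1])=e(G)$ must be even, so the decomposability assertion implicitly concerns graphs with an even number of edges; your ``peel down to a single edge when $e(G)$ is odd'' convention belongs to even-degeneracy, not to even-decomposability as defined here.)

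The decomposability half, however, has a genuine gap, and it sits exactly where the content of the cited theorem lies. Your observations (stuck configurations are even complete graphs; greedy peeling works while the current set has more than $\omega(G)$ vertices) only reduce the problem to (i) finishing from the last $O(\log n)$ vertices when $p$ is bounded away from $1$, and (ii) the entire near-critical regime $p=1-c/n$ with $c$ a large constant, where $\omega(G)=\Theta(n)$ and the threshold actually lives; for both you offer a plan rather than an argument. The proposed ``robustness statement'' is not obtainable for free: the number of reachable induced subgraphs is exponential and each is heavily conditioned on the peeling history, so a union bound is unavailable, and one needs exactly the kind of careful information-revealing bookkeeping that makes these arguments hard (as the present paper's machinery for even-degeneracy illustrates). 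Moreover, the analysis cannot stop ``at a constant number of vertices'': a constant-size remaining graph has probability bounded away from $0$ of being (or being forced into) an even clique, so greedy peeling with only local repairs fails with constant probability, and one must plan ahead (e.g., reserve vertices to repair parity at the end), which is a different proof structure from the one you sketch. As written, the proposal establishes only the easier, non-decomposability direction.
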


In the same work, motivated by a question of Alon, Janzer and Yip initiated the study of the probability of a random graph being even-degenerate, as even-degeneracy is a stronger and more natural property than even-decomposability.
They were able to determine the correct asymptotics for the probability of a random graph $G\sim G(n, 1/2)$ being even degenerate.

\begin{theorem}[{\cite[Theorem 1.13]{JY24}}]\label{thm:JY24}
    If $G\sim G(n,1/2)$, then $\PP[\text{$G$ is not even-degenerate}] = e^{-\Theta(n)}$.
\end{theorem}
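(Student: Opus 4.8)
The plan is to prove matching bounds $\PP[G\text{ not even-degenerate}]\ge e^{-O(n)}$ and $\PP[G\text{ not even-degenerate}]\le e^{-\Omega(n)}$. The first is the easy direction: it suffices to produce one obstruction event of probability $e^{-O(n)}$. Since the first vertex $v_1$ of any valid ordering must have $\deg_G(v_1)$ even, a graph all of whose vertices have odd degree is not even-degenerate; when $n$ is even the degree-parity vector of $G(n,1/2)$ is uniform on the hyperplane $\{x\in\FF_2^n:\sum_i x_i=0\}$, so this event has probability $2^{-(n-1)}$. When $n$ is odd not all degrees can be odd, so instead I use that if $v$ is isolated and every other vertex has odd degree then $v$ is the unique even-degree vertex, which forces $v_1=v$ and reduces even-degeneracy of $G$ to that of $G-v$; but $G-v$ is then an all-odd graph on an even number of vertices, hence not even-degenerate. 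This event has probability $2^{-(n-1)}\cdot 2^{-(n-2)}$. Either way we obtain $\PP[G\text{ not even-degenerate}]\ge e^{-O(n)}$.

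For the upper bound I would peel off even-degree vertices greedily while revealing the random edges lazily. Maintain the live set $W$ (initially $V$) and the set $D\subseteq W$ of vertices all of whose edges into $W$ have already been revealed; then $G[W\setminus D]$ and all as-yet-unneeded edges stay i.i.d.\ $\mathrm{Bernoulli}(1/2)$ and the only revealed edges touching $W$ are those incident to $D$. To choose the next vertex: first check whether some vertex of $D$ currently has even degree in $G[W]$ and, if so, remove it at no new cost; otherwise repeatedly pick a fresh $u\in W\setminus D$, reveal its remaining undisclosed edges within $W$ — a fresh binomial, hence even with conditional probability $\tfrac12$ — and either remove $u$ (if even) or move it into $D$ and continue. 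A round fails only if $D$ is currently all-odd and each of the $|W|-|D|$ fresh vertices also turns out odd, which has conditional probability at most $2^{-(|W|-|D|-O(1))}$.

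The point is to control $|D|$. Because removing a fresh even vertex flips the parity of roughly half of $D$, the set $D$ has strong negative drift whenever it is large, and a short Markov-chain estimate gives $|D|\le O(\sqrt n)$ throughout with probability $1-e^{-\Omega(n)}$. Hence each round fails with probability at most $2^{-(|W|-O(\sqrt n))}$, and summing over the rounds with $|W|\ge \eps n$ yields total failure $e^{-\Omega(n)}$. Moreover the only revealed edges inside the surviving set $W_0$ (of size $\eps n$) are those incident to its $O(\sqrt n)$ vertices of $D$, so the residual graph $\Gamma=G[W_0]$ is exactly a copy of $G(|W_0\setminus D|,1/2)$ on the fresh part together with an arbitrary "attachment" of the $O(\sqrt n)$ dirty vertices.

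The main obstacle is the endgame: showing such a residual $\Gamma$ — a linear-sized random graph with a sublinear dirty part — is even-degenerate with probability $1-e^{-\Omega(n)}$. Continuing the greedy naively only controls things down to $|W|\approx|D|$, which gives a merely polynomial bound, so a new idea is needed. I would attack this via a \emph{matching reduction} combined with absorption. With probability $1-e^{-\Omega(n)}$ the fresh part has a (near-)perfect matching $M$; passing to the auxiliary graph $G'$ on the pairs of $M$ with $P\sim_{G'}P'$ iff $e_G(P,P')$ is odd gives $G'\sim G(|M|,1/2)$, and since a matched pair $\{u,v\}$ with $u\sim v$ can have both its vertices removed legally precisely when one of $\deg u,\deg v$ is even — equivalently when $\deg u+\deg v$ is odd, i.e.\ $P$ has odd degree in $G'$ — the even-degeneracy of $G$ follows from the analogous "odd-degeneracy" of the half-sized random graph $G'$. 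Iterating this a bounded number of times reduces everything to a random graph of size $\delta n$; the remaining, genuinely delicate task is to absorb the $O(\sqrt n)$ dirty vertices using a small random reservoir set aside in advance, so that the endgame failure probability is driven down to $e^{-\Omega(n)}$ rather than $n^{-\Omega(1)}$. I expect this absorption/endgame step to be the technical heart of the argument.
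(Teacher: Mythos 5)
Your lower bound matches the paper's own sketch: all-odd degree sequence when $n$ is even, or an isolated vertex together with an all-odd remainder when $n$ is odd, giving $e^{-O(n)}$; this is exactly the construction the paper attributes to [JY24].

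Your upper bound, however, does not reach $e^{-\Omega(n)}$. First, you miss the structural fact that the paper explicitly identifies as the reason the [JY24] argument works and as the reason it does \emph{not} generalize beyond $p=1/2$: if one conditions $G\sim G(n,1/2)$ on the parities of all $n$ degrees and then deletes any single vertex $v$, the resulting graph $G\setminus\{v\}$ is distributed exactly as $G(n-1,1/2)$. This identity eliminates precisely the bookkeeping that your lazy-revelation scheme struggles with: there is no ``dirty set'' $D$ to control, because after every deletion the residual graph is again a clean random graph, and [JY24] obtain a direct recursion between $f(n)$ and $f(n-1)$ by analyzing the correlation among the even-degree vertices. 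In effect you are re-deriving (a version of) the partial-revelation machinery that the present paper develops for general $p$, where this shortcut is unavailable, but at $p=1/2$ that machinery is both unnecessary and, as you found, hard to close. Second, on its own terms the proposal has genuine gaps. The claim that $|D|\le O(\sqrt n)$ throughout with probability $1-e^{-\Omega(n)}$ is asserted via a ``negative drift'' that is not justified: after removing a fresh even vertex roughly half of $D$ becomes even, but removing one such dirty vertex deterministically flips the remaining dirty parities, so it is not clear how many of the $\sim d/2$ candidates you can actually peel off, and turning this into an exponential concentration bound requires a potential-function argument that you do not give. More seriously, you explicitly defer the endgame, absorbing the $O(\sqrt n)$ dirty vertices once $|W|$ has dropped to $\varepsilon n$, as ``the technical heart of the argument'' without a concrete plan; the matching reduction replaces even-degeneracy of $G$ by an unanalyzed ``odd-degeneracy'' of a half-size $G(\cdot,1/2)$, and the iteration still leaves the original dirty vertices to be handled. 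As written, the $e^{-\Omega(n)}$ upper bound is not established.
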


The lower bound side in \cref{thm:JY24} is  straightforward, because with probability $\geq (1/2)^{2n}$ either every vertex in $G\sim G(n,1/2)$ has odd degree (when $n$ is even), or $G\sim G(n,1/2)$ has all odd-degree vertices except for an isolated vertex (when $n$ is odd). The upper bound side is more involved, and no analogue of \cref{thm:JY24} is known for any other values of $p$ aside from $p = 1/2$. This is because the proof of \cref{thm:JY24} employs an inductive strategy, which critically uses the fact that $p$ is exactly $1/2$. More specifically, the proof relies on the observation that given every non-empty vertex subset $S\subseteq V$, the random graph $G[V\setminus S]$ conditioned on the parities of $\deg_G(v)$ for all vertices $v\in V$ is identically distributed as $G(V\setminus S, 1/2)$. Thus, the proof strategy in \cite{JY24} cannot be directly generalized to $G(n, p)$ for any $p \neq 1/2$.

In our work, we make substantial progress towards proving an analogous result to \cref{thm:JY24} for any constant $p\in (0,1)$.

\begin{theorem}\label{thm:main}
    For any constant $p \in (0,1)$ and any $\alpha > 0$,
    \[\PP_{G\sim G(n,p)}[G \text{ is not even-degenerate}] = e^{-\Omega(n^{1/2 - \alpha})}.\]
\end{theorem}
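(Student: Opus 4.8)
The plan is to reveal the random graph in two stages: first expose a small random "reservoir" set $R$ of $m = \Theta(n^{1/2-\alpha/2})$ vertices together with all edges incident to $R$, and then expose the edges inside $V \setminus R$, which conditionally is a fresh copy of $G(n-m, p)$. The strategy is to use the vertices of $R$ as "parity correctors": for a typical outcome of $G[V\setminus R]$, we want to peel off all of $V \setminus R$ one vertex at a time, and whenever the vertex we wish to remove currently has odd degree into the remaining graph, we instead first remove one carefully chosen vertex of $R$ whose presence flips that parity. The key point is that because $p$ is a constant bounded away from $0$ and $1$, the edges from $R$ to $V\setminus R$ are, after conditioning on everything in $V\setminus R$, still independent $\mathrm{Bernoulli}(p)$, so for any target vertex $v$ and any desired parity, a constant fraction of the still-unused vertices of $R$ flip $v$'s parity the way we want; we just need to make sure we never exhaust $R$.

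The steps, in order, would be: (1) Fix $m = \lceil C n^{1/2-\alpha/2}\rceil$ for a large constant $C$ and set $R$ aside; reveal $G[V\setminus R]$ and all $R$--$(V\setminus R)$ edges but keep the edges inside $R$ unrevealed until the very end. (2) Show that with probability $1 - e^{-\Omega(n^{1/2-\alpha})}$ over $G[V\setminus R]$, the induced graph on $V\setminus R$ is "generic" in a sense we can exploit — e.g., every vertex has degree $\Theta(n)$ into $V\setminus R$, and more importantly no large structured obstruction exists; the error term here must be made to beat the target bound, which is where the exponent $1/2-\alpha$ comes from. (3) Run the peeling: maintain the invariant that $R$ always has at least, say, $m/2$ unused vertices; to remove a vertex $v\in V\setminus R$ with current odd degree, pick an unused $u\in R$ adjacent to $v$ in $G$ with $\deg(u)$ into the current graph even (or adjust $u$ first by removing another reservoir vertex), remove $u$, then remove $v$; a union bound over the at most $n$ removal steps, using that each step fails to find a good $u$ only with exponentially small probability (constant-probability events repeated over $\Theta(m)$ candidates), shows the whole peeling succeeds with probability $1 - n\cdot e^{-\Omega(m)} = 1 - e^{-\Omega(n^{1/2-\alpha})}$. (4) Finally, once only $R$-vertices (or fewer) remain, reveal $G[R]$ and finish off $R$ directly — since $R$ is small and its internal graph is a fresh $G(m,p)$, with probability $e^{-\Omega(m)}$ it is even-degenerate by a crude argument, or one can simply absorb the last few vertices into the earlier analysis.

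The main obstacle I expect is making step (3) genuinely work as an \emph{online} process: the choices of which reservoir vertices to use are adaptive, so the conditioning gets delicate — after I remove some reservoir vertices their revealed adjacencies constrain the distribution of the rest. The clean way around this is to reveal $R$'s external edges lazily as well, or equivalently to argue that at each step, conditioned on the entire history, the relevant parity bit of a fresh unused reservoir vertex is an unbiased-enough coin (probability in $[p', 1-p']$ for some constant $p'$), so a constant fraction of the $\ge m/2$ unused vertices are usable; this requires carefully setting up the filtration so that "unused" reservoir vertices have their $R$--$(V\setminus R)$ edges still hidden. A secondary subtlety is bookkeeping the parity: removing a reservoir vertex $u$ to fix $v$'s parity also perturbs the parities of all other current vertices adjacent to $u$, so one should remove reservoir vertices in matched pairs, or handle at most two target vertices at a time, to keep the accounting under control — this is routine but must be done carefully to avoid a cascade. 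The exponent $n^{1/2-\alpha}$ rather than $n$ in the final bound is exactly the price of needing $\lvert R\rvert = m \ll n$ small enough that step (2)'s structural genericity holds with room to spare, yet large enough that $n \cdot e^{-\Omega(m)}$ is still tiny; optimizing this tradeoff is what pins down $m = n^{1/2-\Theta(\alpha)}$.
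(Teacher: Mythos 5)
There is a fatal budget mismatch in your plan that makes step (3) unworkable as stated. You want to peel all $n-m$ vertices of $V\setminus R$ one at a time, spending one reservoir vertex whenever the next vertex has odd current degree. But after reasonable conditioning the parity of a vertex's degree is essentially a fair coin (this is exactly the $1/2 \pm o(1)$ heuristic), so about half of the $n - m$ steps will request a correction: roughly $n/2$ reservoir vertices are needed. Your reservoir has only $m = \Theta(n^{1/2 - \alpha/2}) \ll n$ vertices, so the invariant ``at least $m/2$ unused vertices remain'' will be violated after $\Theta(m)$ steps, long before the peeling is done. The failure mode is not ``no good $u$ exists among many candidates'' (which would indeed be exponentially rare); it is deterministic exhaustion of $R$, and the union-bound estimate $n\cdot e^{-\Omega(m)}$ never accounts for it. The paper's $(U,W)$-removal avoids this precisely by taking the correction set $W$ to have size $\Theta(n)$, comparable to the set $U$ being peeled: after removing $|U| \approx n/2$ vertices one expects $|W|/2 \approx n/4$ of $W$ to be consumed, leaving $\approx n/4$ vertices, which necessitates a recursion rather than a one-shot argument.

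Step (4) has a related circularity: once you are down to $R$, you say $G[R]$ is a fresh $G(m,p)$ and is ``even-degenerate by a crude argument''—but there is no crude argument; that is exactly the theorem at a smaller scale, and for general $p$ it is precisely what is being proved. This forces the argument to be recursive, and a naive recursion $f(n) \le f(m) + (\text{error})$ with $m \ll n$ does not close because $f(m)$ is not a priori small. The paper handles this by defining $\alpha$-partially revealed $p$-graphs and proving the recurrence $1-f(n) \ge (1-e^{-\Theta(n^{1/2-\alpha})})(1 - \max f(n')^2)$ with $n' \approx n/4$; the squaring (from running the removal twice with roles of $B,C$ swapped and arguing near-independence of the two remaining subgraphs) is what lets the recursion bottom out. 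Finally, the exponent $1/2-\alpha$ in the paper does not come from the size of a small reservoir; it comes from partitioning the correction set $W$ into $s=\Theta(n^{1/2+\alpha})$ blocks of size $\Theta(n^{1/2-\alpha})$, cycling through them so that each block has had $\geq s$ fresh removals since it was last queried, which is what keeps the queried parities $e^{-\Theta(n^{1/2-\alpha})}$-close to uniform despite the adaptive conditioning you correctly flag as the delicate part.
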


In particular, \cref{thm:main} shows that for any constant $p\in (0,1)$, even-degenerate graphs are always dominant in $G(n,p)$. Using the same lower bound construction as in \cref{thm:JY24}, one can show a lower bound of $e^{-O(n)}$ on the above probability. Thus determining the correct asymptotics remains an interesting open question.

\medskip 

\paragraph{\textbf{Paper Organization}}

In \cref{sec:proof-strategy}, we highlight the main obstruction to generalizing the proof strategy for the case where $p = 1/2$ and give a high level overview of our proof. In \cref{sec:prelim}, we introduce some useful definitions and probabilistic lemmas. In \cref{sec:uw-removal,sec:algorithm,sec:upperbound-f}, we present the proof of \cref{thm:main}. We start by defining our removal procedure and analyzing its success probability in \cref{subsec:lr-removal}. Then in \cref{sec:algorithm}, we show how to obtain our main recurrence (\cref{thm:recurrence}) based on the removal procedure, where we delay the proof of a technical lemma to \cref{subsec:comb-lemma,subsec:comb-lemma-2}. Finally, we analyze the recurrence in \cref{thm:recurrence} and complete the proof of our main result \cref{thm:main} in \cref{sec:upperbound-f}.

\noindent \textbf{Notations}. For a positive integer $n$, we use $[n]$ to denote the set $\{1,\dots, n\}$. We abuse notation and use the convention that $a\pmod 2$ takes value in $\{0,1\}$ but $a\pmod s$ for $s > 2$ takes value in $\{1,\dots, s\}$ for convenience. When we write $b= a\pmod{s}$ it means that $b$ is the same as $a\pmod{s}$, but $b\equiv a\pmod{s}$ only means that $a$ and $b$ have the same remainder modulo $s$. For a graph $G = (V,E)$, we use $N_G(v)$ for $v\in V$ to denote the set of neighbors of $v$ in $G$. Similarly, for a subset $A\subseteq V$, we use $N_G(A)$ to denote the neighbors of $A$ in $G$.

\section{Proof strategy}\label{sec:proof-strategy}

To see why it is nontrivial to generalize the proof strategy in \cite{JY24} beyond $p\ne 1/2$, let us first briefly discuss how Janzer and Yip obtained the preliminary  upper bound $$\PP_{G\sim  G(n,1/2)}[G\text{ is not even-degenerate}]=o(1)$$
which is an intermediate step in their proof of \cref{thm:JY24}.
For $p = 1/2$, we have the nice property that given $G\sim G(n,1/2)$ conditioned on the parities of $\deg_G(x)$ for all $x\in V$, the graph $G\setminus \{v\}$ for any $v\in V$ is identically distributed as $G(n-1,1/2)$. So if one defines $f(n)$ as the probability of $G\sim G(n,1/2)$ being not even-degenerate, it is easy to obtain a recursive relation between $f(n)$ and $f(n-1)$ by exposing the parity of the degree of every vertex in $G$ and analyzing the correlation between removals of different vertices of even degree.

For other constants $p\in(0,1)$ (especially for $p$ close to $0$ or $1$), the above property is far from being true. If we expose the degree parities of every vertex in $G\sim G(n,p)$, then once we remove a vertex $v$ from $G$, the distribution of $G\setminus\{v\}$ is different from $G(n-1,p)$.

Despite the fact that such a recursion does not hold, the random graph $G(n,p)$ with $0<p<1$ still has some nice properties that we can utilize. For example, for each vertex $v\in V$, the probability that $v$ has even degree is
\[
\sum_{\substack{0\leq i\leq n\\i\text{ is even}}}\binom{n}{i}p^i(1-p)^{n-i}=\frac{1}{2}\pm o(1).
\]
This guarantees that we can almost always remove one vertex from $G$ when $n$ is large enough.

The hope is that as long as the remaining graph has enough randomness, we can find a vertex with even degree and remove it. To ensure this, we need to develop a careful removal procedure, which motivates the ``$(U,W)$-removal'' procedure defined in our proof.
We now move on to give a simplified overview of the $(U,W)$-removal; the full details can be found in \cref{def:lrremoval}.

\subsection{$(U,W)$-removal}

For $G\sim G(n,p)$, the $(U,W)$-removal on $G$ roughly proceeds as follows: we fix a partition of the vertex set $V(G)=B\sqcup C$ where the sizes of $B$ and $C$ are almost the same. We assign a fixed ordering $B=\{b_1,\dots, b_{|B|}\}$ to the vertices in $B$, and we partition $C$ into $s+1 = \Theta(n^{1/2-\alpha})$ sets $C_1,\dots, C_s,C_{\#}$ of equal sizes where $\alpha>0$ is small. Our goal is to remove all the vertices in $B$ in the assigned ordering, although we might also remove some vertices in $C$ along the way.

Observe that when we try to remove $b_i\in B$, as long as, say, a constant fraction of the vertices in $C$ are still present,  $b_i$ has even degree in the current graph with probability $1/2\pm o(1)$. If the degree of $b_i$ in the current graph is even, we can just remove $b_i$; if the degree is odd, we will try to find a vertex $u\in C$ so that we can remove both. Namely, we wish to find a vertex $u\in C$ such that
\begin{enumerate}[label=(\arabic*)]
    \item $b_iu$ is an edge,
    \item $u$ has even degree in the current graph. 
\end{enumerate}
After finding such $u$, we can first remove $u$ and then remove $b_i$. 

For every $b_i\in B$, if we happen to require such a vertex $u\in C$, we will only look for one in one part $C_j$ of the partition, and we loop through the parts $C_1,\dots, C_s$ every time we need such a vertex. The advantage of this is that every time we get back to looking for a vertex in $C_j$, we have already removed at least $s$ new vertices from $G$ so that the parities (2) we query are again close to being uniformly random.

Once we succeed in the $(U,W)$-removal (i.e., have successfully removed all vertices in $B$), we will recurse and apply the procedure on the remaining graph. Note that after the removal, it is likely that roughly $\abs{B}/2$ vertices in $C$ are removed. Thus, if $\abs{B}=\abs{C}=n/2$, then there will be roughly $n/4$ vertices left. Let $f(n)$ be the probability of $G(n,p)$ not being even-degenerate. This suggests that $f(n)\leq f(n/4)+o(1)$, assuming that the $(U,W)$-removal succeeds with high probability.

If we apply $(U,W)$-removal only sequentially, we may end up with a constant number of vertices remaining, so we cannot get an upper bound on $f(n)$ better than $f(K)$ for some constant $K$. To solve this problem, we also consider the same procedure where the roles of $B$ and $C$ are interchanged. If at least one of the two removal processes succeeds, then we can recurse on the remaining graph. Note that in the two removal processes, the sets of remaining vertices are disjoint, and thus we may expect that the remaining subgraphs are close to being independent. This suggests that $f(n)\leq f(n/4)^2+o(1)$ which is more amenable to proving a bound of the form $f(n)=o(1)$.

However, it is clear that after the removal procedure, what remains is not exactly a $p$-random graph. What we are left with is a subgraph with some partial information on the parities of certain edge subsets. We will show that these partial information do not affect the further removals too much.
The main technical difficulties of our proof arise from rigorously analyzing the information we expose at each step and showing that the sets we query have almost uniform parities.

We define the quantity we want to bound recursively as follows.

\begin{definition}[$\alpha$-partially revealed $p$-graph]\label{def:prg}
    For $0<\alpha < 1/2$ and $p\in (0,1)$, we say that a random graph $G = (V,E)$ is an \emph{$\alpha$-partially revealed $p$-graph} if there exists a set $A\subseteq V$ of size at most $n^{1-2\alpha}$, such that $G$ follows the distribution $G(n,p)$ on vertex set $V$ conditioned on the following information:
    \begin{enumerate}
        \item the subgraph $G[A]$,
        \item the parities of $\deg_G(a)$ for all $a\in A$, and
        \item the parity of $|E|$.
    \end{enumerate}
    We call $A$ the \emph{revealed part} of $G$.
\end{definition}

\begin{definition}
    Let $f_\alpha(n)$ be the maximum of the probability that $G$ is not even-degenerate over all $\alpha$-partially revealed $p$-graphs $G$ with $n$ vertices.
\end{definition} 

We will show the following recursive upper bound on $f_\alpha(n)$. Solving the recurrence in \cref{thm:recurrence} directly gives \cref{thm:main} and we will deduce \cref{thm:main} in \cref{sec:upperbound-f}.
\begin{theorem}\label{thm:recurrence}
    Let $0 < \alpha < 1/2$ and $p\in (0,1)$ be constants. There exists a positive integer $N_0$ depending on $p$ and $\alpha$ such that the following holds. For any $n\geq N_0$, we have
    \[1-f_\alpha(n)\geq (1-e^{-\Theta(n^{1/2-\alpha})})  \lpr{1-\max_{n'\in [n/4-o(n),n/4+o(n)]}f_\alpha(n')^2}.\]
\end{theorem}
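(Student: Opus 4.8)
The plan is to run the $(U,W)$-removal procedure twice on $G$---once with a partition $V = B \sqcup C$ and once with the roles interchanged, i.e.\ with partition $V = C \sqcup B$---where $|B|, |C| \approx n/2$ and, crucially, the revealed part $A$ of $G$ is distributed between $B$ and $C$ as evenly as possible (so each of $B$, $C$ contains at most $\tfrac12 n^{1-2\alpha}$ revealed vertices). First I would invoke the analysis of the $(U,W)$-removal from \cref{subsec:lr-removal} to conclude that each of the two runs succeeds with probability $1 - e^{-\Theta(n^{1/2-\alpha})}$; here ``success'' means that all vertices of $B$ (resp.\ all of $C$) get legally removed, and one shows that at the end of a successful run roughly $|B|/2 = n/4 \pm o(n)$ vertices survive, with the surviving graph being an $\alpha$-partially revealed $p$-graph on $n' \in [n/4 - o(n), n/4 + o(n)]$ vertices (the revealed part grows, but stays below the $(n')^{1-2\alpha}$ budget, since only $\Theta(n^{1/2-\alpha}) = o((n')^{1-2\alpha})$ new parities get exposed and the surviving part of the old $A$ is small). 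Once either run succeeds and yields a surviving graph $G'$ that is even-degenerate, $G$ itself is even-degenerate: first use the even-degenerate ordering of $G'$, then peel off the $(U,W)$-removal steps in reverse (the procedure was designed precisely so that each removal step deletes an even number of edges), and finally the vertices removed by that run, also in reverse.

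The heart of the argument is to show that the two runs, although performed on the same graph $G$, are close to independent, so that the probability that \emph{both} of their surviving graphs fail to be even-degenerate is at most roughly $f_\alpha(n')^2 + o(1)$. The key observation is that the first run (removing $B$) only ever queries the graph structure on $C \cup (\text{already-removed vertices})$ and parities of degrees of vertices being removed; in particular, upon success it exposes information only about edges incident to $B$, and the surviving subgraph lies inside $C$. Symmetrically the second run's surviving subgraph lies inside $B$. Thus if we condition on the (small amount of) information shared by both runs---essentially $G[A]$, the parities in the definition of the partially revealed graph, and the few edge-parities that both procedures happen to read---the random bits governing $G[C]$ (controlling run~1's output) and those governing $G[B]$ (controlling run~2's output) are disjoint, hence conditionally independent. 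One then bounds
\[
\PP[G \text{ not even-deg.}] \le \PP[\text{run 1 fails}] + \PP[\text{run 2 fails}] + \PP[G_1' \text{ not even-deg.} \text{ and } G_2' \text{ not even-deg.}],
\]
where $G_i'$ is the surviving graph of run $i$; the first two terms are $e^{-\Theta(n^{1/2-\alpha})}$, and for the last term we use conditional independence plus the definition of $f_\alpha$ to get $\le f_\alpha(n')^2$ for the appropriate $n'$, after absorbing a negligible correction into the $e^{-\Theta(n^{1/2-\alpha})}$ factor. Rearranging gives exactly the stated bound $1 - f_\alpha(n) \ge (1 - e^{-\Theta(n^{1/2-\alpha})})(1 - \max_{n'} f_\alpha(n')^2)$.

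The main obstacle---and where most of the technical work lives---is making the ``close to independent'' claim rigorous. Two subtleties need care. First, the two runs are not literally run on disjoint edge sets: when run~1 removes a vertex $u \in C$ to pair with some $b_i$, it inspects edges inside $C$, and this is exactly the randomness that run~2 would later use; one must argue that the \emph{amount} of such shared exposure is only $\Theta(n^{1/2-\alpha})$ bits (one per $(U,W)$-pairing event), small enough that conditioning on it still leaves each $G_i'$ distributed as a (genuine, budget-respecting) $\alpha$-partially revealed $p$-graph and leaves the two conditionally independent. Second, one must verify that after conditioning, the surviving graph $G_i'$ really does meet \cref{def:prg}: the conditioning must be expressible as fixing $G[A']$, the degree-parities on $A'$, and the parity of the total edge count, for some $A'$ of size $\le (n')^{1-2\alpha}$---this is where the bookkeeping of precisely which parities the procedure reveals (deferred in the paper to \cref{sec:algorithm} and the combinatorial lemmas of \cref{subsec:comb-lemma,subsec:comb-lemma-2}) is essential. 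I expect the cleanest route is to set up a coupling / filtration argument: expose the shared data first, then run the two procedures on the conditionally independent remaining randomness, and check at each step that the distributions stay within the partially-revealed-$p$-graph family with room to spare in the size budget.
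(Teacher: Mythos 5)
Your high-level strategy matches the paper's: run the $(U,W)$-removal twice with $B$ and $C$ interchanged, invoke \cref{prop:process-success-prob} for the success probabilities, argue that the two surviving graphs $G[V_C]$ and $G[V_B]$ are (close to) independent partially-revealed $p$-graphs, and combine to get the recursion. However, there is a genuine gap in how you sketch the ``rigorization'' of the independence step.

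You assert that the surviving graph $G_i'$ literally meets \cref{def:prg} --- that the conditioning imposed by the removal process ``must be expressible as fixing $G[A']$, the degree-parities on $A'$, and the parity of the total edge count, for some $A'$ of size $\le (n')^{1-2\alpha}$.'' This is false, and the paper never claims it. The $(B,C)$-removal reveals far more than $\Theta(n^{1/2-\alpha})$ parities of edge-subsets: every time a $b_i$ has odd current degree (which happens $\Theta(n)$ times), the procedure reveals $\parity(w, V\setminus R)$ for \emph{every} $w$ in the current block $W_j$, i.e.\ $\Theta(n^{1/2-\alpha})$ stars, for a total of $\Theta(n^{3/2-\alpha})$ parity queries. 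These queries cut across $\binom{C}{2}$ in complicated overlapping ways and absolutely cannot all be packed into a revealed set $A'$ of size $(n')^{1-2\alpha}$. Consequently $G[V_C]$ is \emph{not} an $\alpha$-partially revealed $p$-graph on the nose; what the paper proves (\cref{prop:close-prg}, via \cref{lem:B_Affected}) is that it is $e^{-\Theta(n^{1/2-\alpha})}$-\emph{close} to one, and establishing this closeness is the heart of the proof.

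The mechanism you're missing is the following: one explicitly reveals additional information (the collection $\Gamma$ of \cref{def:gamma}, including $G\setminus(G[B]\cup G[C])$ and certain boundary subgraphs $G[\Sigma_B], G[\Sigma_C]$) so that $G[V_B]$ and $G[V_C]$ become \emph{exactly} conditionally independent. Then one shows that the residual unrevealed parity constraints (the families $\mathcal P, \mathcal Q_0, \mathcal Q_1, \mathcal Q_2$ of \cref{def:PQQQ}) can be ``hidden'' by the randomness of the two bipartite pieces $\Sigma_1=S(B^P,T_B^P)$ and $\Sigma_2=S(B^Q,T_B^Q)$, using the fix-parity uniformity result for $p$-random bipartite graphs (\cref{lem:bipartite}) together with \cref{lem:evensumepsUniform}. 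This is why the paper carves out the sets $B^P, B^Q, T_B^P, T_B^Q$ and builds $A_B$ out of $B_\#\cup D_B^P\cup D_B^Q$: they are sized so that the bipartite pieces have enough entropy to absorb the leaked parities, while $|A_B|=\Theta(n^{1-2\alpha})$ (not $\Theta(n^{1/2-\alpha})$ as you estimate) just barely fits the budget $(n')^{1-2\alpha}=(n/4\pm o(n))^{1-2\alpha}$. Without this absorption argument, the ``filtration / coupling'' you propose does not close: the conditional distribution of $G[V_C]$ given the run-1 transcript is simply not in the allowed family.
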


\section{Preliminaries}\label{sec:prelim}

In this section, we introduce some definitions that will be useful in the following proofs. We first recall the definition of the support of a random variable.

\begin{definition}[Support]
Let $X\in \Omega$ be a random variable. The \emph{support of $X$} is defined as $\supp(X):= \{x\in \Omega \mid \PP[X = x]> 0\}$.
Similarly, for an event $\cE$, we define $\supp(X\mid \cE):= \{x\in \Omega \mid \PP[X = x\mid \cE]> 0\}$.
\end{definition}

Now we introduce the notion of $\eps$-closeness to describe two random variables that are close to each other in distribution pointwise.

\begin{definition}[$\eps$-close]\label{def:eps-close}
    Let $X$ and $X'$ be two random variables and $\cE,\cE'$ be two events. We say that $X$ is \emph{$\eps$-close} to $X'$ if $\supp(X) = \supp(X')$ and for every $x\in \supp(X)$, we have
    \[1-\eps \le \frac{\PP[X' = x]}{\PP[X = x]}\le (1-\eps)^{-1}.\]
    Similarly, we say that $(X\mid\cE)$ is \emph{$\eps$-close} to $(X'\mid\cE')$ if $\supp(X\mid\cE) = \supp(X'\mid\cE')$ and for every $x\in \supp(X\mid\cE)$, we have
    \[1-\eps \le \frac{\PP[X' = x\mid\cE']}{\PP[X = x\mid\cE]}\le (1-\eps)^{-1}.\]
If $Y$ and $Y'$ are two other random variables, we say that $(X\mid Y)$ is \emph{$\eps$-close} to $(X'\mid Y')$ if $(X\mid Y=y)$ is $\eps$-close to $(X'\mid Y'=y')$ for all $(y,y')\in \supp(Y,Y')$. We write $(X\mid Y)=(X'\mid Y')$ if $(X\mid Y)$ is $0$-close to $(X'\mid Y')$.
\end{definition}

For convenience, we say that a random variable  $X$ is $\eps$-affected by an event or another random variable if conditioning on the latter only changes the distribution of $X$ by a multiplicative $(1 - \eps)$-factor. 

\begin{definition}[$\eps$-affected]
    Let $X$ be a random variable and $\cE$ be an event. We say that $X$ is \emph{$\veps$-affected} by $\cE$ if $(X\mid \cE)$ is $\veps$-close to $X$. Similarly, for a random variable $Y$ , we say that $X$ is \emph{$\veps$-affected} by $Y$ if $(X\mid Y)$ is $\veps$-close to $X$.
\end{definition}

We now define the notion of $\eps$-uniformity for a random variable over $\bin^r$.

\begin{definition}[$\eps$-uniform]\label{def:eps-uniform}
     Let $\vec{X} = (X_1,\dots, X_r)$ be a random variable over $\bin^r$. Fix $\eps > 0$, then we say that $\vec{X}$ is \emph{$\eps$-uniform} if for all $\vec{s} \in \bin^r$,
    \[\PP[\vec{X} = \vec{s}] \in \left[(1-\veps)\frac{1}{2^r}, \, (1-\eps)^{-1}\frac{1}{2^r}\right].\]
    For a random variable $Y$, we say that $(\vec X\mid Y)$ is $\eps$-uniform if $(\vec X\mid Y=y)$ is $\eps$-uniform for all $y\in\supp(Y)$.
\end{definition}

\begin{definition}[fix-parity uniform]
    Let $\vec{X} = (X_1,\dots, X_r)$ be a random variable over $\bin^r$. We say that $\vec{X}$ is \emph{fix-parity uniform} if there exists $s\in\{0,1\}$ such that
    \begin{align*}
        \PP[\vec{X} = (s_1,\dots,s_r)] =\begin{cases}
            \frac{1}{2^{r-1}} &\text{ if $s_1+\dots+s_r\equiv s\pmod{2}$,}\\
        0&\text{ if $s_1+\dots+s_r\equiv 1-s \pmod{2}$.}
        \end{cases}
    \end{align*}
\end{definition}

\begin{definition}[fix-parity $\eps$-uniform]Let $\vec{X} = (X_1,\dots, X_r)$ be a random variable over $\bin^r$.
    Fix $\eps > 0$. We say that $\vec{X}$ is \emph{fix-parity $\eps$-uniform} if $\vec{X}$ is $\veps$-close to a fix-parity uniform distribution  over $\bin^r$.
Similarly, if $Y$ is another random variable, we say that $(\vec X\mid Y)$ is fix-parity $\eps$-uniform if $(\vec X\mid Y=y)$ is fix-parity $\eps$-uniform for all $y\in\supp(Y)$.
\end{definition}

Now we introduce some useful probabilistic lemmas about $\{0,1\}$-valued random variables.

\begin{lemma}\label{lem:parity-close-to-uniform}
Let $p\in (0,1)$, $\eta\in \NN$ and let $X_1,\dots, X_{\eta}$ be independent Bernoulli random variables with probability $p$. Let $p^* = \min\{p, 1-p\}\in (0,1/2]$. Let $Y = \sum_{i=1}^\eta X_i\pmod 2$ be the parity of the sum of $X_1,\dots, X_{\eta}$. Then $Y$ is $e^{-2\eta p^*}$-uniform. 
\end{lemma}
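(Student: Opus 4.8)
The plan is to compute the probability that $Y = 0$ (equivalently $Y = 1$) exactly via a standard generating-function / Fourier identity, and then bound the deviation from $1/2$ by $\tfrac{1}{2}(1-2p)^{\eta}$. Concretely, the first step is to use the identity $\PP[Y = 0] - \PP[Y = 1] = \EE\left[(-1)^{\sum_i X_i}\right] = \prod_{i=1}^{\eta}\EE[(-1)^{X_i}] = (1-2p)^{\eta}$, where the middle equality uses independence of the $X_i$. Since $\PP[Y=0] + \PP[Y=1] = 1$, this immediately gives $\PP[Y = 0] = \tfrac{1}{2}(1 + (1-2p)^{\eta})$ and $\PP[Y=1] = \tfrac{1}{2}(1 - (1-2p)^{\eta})$.

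Next I would control $|1-2p|^{\eta}$ in terms of $p^* = \min\{p,1-p\}$. One has $|1-2p| = 1 - 2p^*$ (check both cases $p \le 1/2$ and $p > 1/2$), and then use the elementary inequality $1 - x \le e^{-x}$ with $x = 2p^*$ to get $|1-2p|^{\eta} = (1-2p^*)^{\eta} \le e^{-2\eta p^*}$. Combining, both $\PP[Y=0]$ and $\PP[Y=1]$ lie in the interval $\left[\tfrac{1}{2}(1 - e^{-2\eta p^*}),\, \tfrac{1}{2}(1 + e^{-2\eta p^*})\right]$.

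Finally I need to translate this additive bound into the multiplicative form required by the definition of $e^{-2\eta p^*}$-uniformity (here $r = 1$, so $\bin^r = \{0,1\}$ and $2^r = 2$). Writing $\eps = e^{-2\eta p^*}$, the lower endpoint is exactly $(1-\eps)\cdot \tfrac{1}{2}$, which matches the required lower bound directly. For the upper endpoint I need $\tfrac{1}{2}(1+\eps) \le (1-\eps)^{-1}\tfrac{1}{2}$, i.e. $(1+\eps)(1-\eps) \le 1$, i.e. $1 - \eps^2 \le 1$, which always holds; the hypothesis $e^{-2\eta p^*} < 1$ (so $\eps \in (0,1)$) ensures $(1-\eps)^{-1}$ is well-defined and positive and that the support of $Y$ is all of $\{0,1\}$, as needed for $\eps$-uniformity. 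This completes the argument.

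There is essentially no hard step here — the only mild subtlety is being careful with the two cases in $|1-2p| = 1-2p^*$ and verifying that the additive-to-multiplicative conversion goes through on both ends. I expect the write-up to be just a few lines.
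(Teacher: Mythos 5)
Your proof is correct and follows essentially the same route as the paper: compute $\EE[(-1)^{\sum X_i}]=\prod_i\EE[(-1)^{X_i}]=(1-2p)^\eta$ by independence, bound its magnitude by $(1-2p^*)^\eta\le e^{-2\eta p^*}$, and convert the additive deviation from $1/2$ into the multiplicative bound required by $\eps$-uniformity via $(1+\eps)(1-\eps)\le 1$. You spell out a couple of micro-steps (the two-case check $|1-2p|=1-2p^*$ and the exact formulas for $\PP[Y=0],\PP[Y=1]$) that the paper leaves implicit, but the argument is the same.
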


\begin{proof}
    Since $X_1,\dots, X_{\eta}$ are independent random variables, we have
    \[
    \abs{\EE[(-1)^{Y}]}=\left|\EE\lbr{(-1)^{\sum_{i=1}^\eta X_i}}\right|=\left|\prod_{i=1}^\eta\EE\lbr{(-1)^{X_i}}\right| =(1-2p^*)^\eta \le e^{-2 \eta p^*}.
    \]
    Since $Y$ takes value in $\{0,1\}$, we have
    \[\PP[Y = 0], \, \PP[Y = 1] \in \lbr{\frac{1}{2}\lpr{1-e^{-2 \eta p^*}},\,  \frac{1}{2}\lpr{1+e^{-2\eta p^*}}}.\] 
    Since $1+e^{-2\eta p^*}\leq (1-e^{-2 \eta p^*})^{-1}$, we can conclude that $Y$ is $e^{-2 \eta p^*}$-uniform.
    \end{proof}

We will use the following definition of $\eta$-layered set sequences to characterize the sequence of information we revealed during our removal process.

\begin{definition}[$\eta$-layered set sequence]
    Let  $(A_1,\dots, A_r)$ be an ordered sequence of sets, and $\eta\geq 0$. We say that $(A_1,\dots, A_r)$ is \emph{$\eta$-layered} if for all $1\leq j\leq r-1$, we have
    \[\abs{A_j\setminus (A_{j+1}\cup\dots\cup A_{r})}\geq \eta.\]
\end{definition}

The following lemma formalizes the main property of $\eta$-layered set sequences we will use, which is that the parities of Bernoulli random variables indexed by elements in these sets are very close to being uniformly distributed.

\begin{lemma}\label{lemma:AlmostUniform}
    Let $p\in(0,1)$ and $X_1,\dots,X_t$ be independent Bernoulli random variables with probability $p$. Let $p^*=\min\{p,1-p\}\in(0,1/2]$.
    Let $(A_1,\dots, A_r)$ be an $\eta$-layered set sequence, with $A_j\subseteq [t]$ for every $j$. For every $1\leq j\leq r$, define the random variable
    \[Y_j = \sum_{i\in A_j}X_i  \pmod 2,\]
     i.e., the parity of $\sum_{i\in A_j} X_i$. Suppose $re^{-2\eta p^*}<1$. Then $(Y_1,\dots,Y_r)$ is $re^{-2\eta p^*}$-uniform.
\end{lemma}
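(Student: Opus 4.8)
The plan is to prove $\eps$-uniformity directly via the Fourier/character bound, following the same strategy as in the proof of \cref{lem:parity-close-to-uniform}, but now controlling all $2^r - 1$ nontrivial characters simultaneously. For a nonzero sign pattern $\vec{c} = (c_1,\dots,c_r) \in \{0,1\}^r$, consider the character $\chi_{\vec c}(\vec Y) = (-1)^{c_1 Y_1 + \dots + c_r Y_r}$. We have $c_1 Y_1 + \dots + c_r Y_r \equiv \sum_{i \in S} X_i \pmod 2$ where $S = S(\vec c)$ is the symmetric difference $\triangle_{j : c_j = 1} A_j$. The key step is to show that $|S|$ is large whenever $\vec c \neq \vec 0$: indeed, if $k$ is the largest index with $c_k = 1$, then the $\eta$-layered hypothesis says $A_k$ has at least $\eta$ elements lying in none of $A_{k+1}, \dots, A_r$, and since all those later sets are absent from the symmetric difference (their $c_j = 0$), each such element survives in $S$. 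Hence $|S| \geq \eta$ for every nonzero $\vec c$.

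Once $|S(\vec c)| \geq \eta$ is established, the computation $\big|\EE[(-1)^{\sum_{i \in S}X_i}]\big| = |1-2p|^{|S|} \le (1-2p^*)^{\eta} \le e^{-2\eta p^*}$ bounds every nontrivial character. Then I would invoke the standard inversion: for any $\vec s \in \{0,1\}^r$,
\[
\PP[\vec Y = \vec s] = \frac{1}{2^r}\sum_{\vec c \in \{0,1\}^r} (-1)^{\vec c \cdot \vec s}\, \EE[\chi_{\vec c}(\vec Y)],
\]
so that $\big|\PP[\vec Y = \vec s] - 2^{-r}\big| \le 2^{-r}\sum_{\vec c \neq \vec 0}\big|\EE[\chi_{\vec c}(\vec Y)]\big| \le 2^{-r}(2^r - 1)e^{-2\eta p^*} < 2^{-r} \cdot r e^{-2\eta p^*}$. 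This places $\PP[\vec Y = \vec s]$ in the interval $[(1 - re^{-2\eta p^*})2^{-r}, (1 + re^{-2\eta p^*})2^{-r}]$, and since $re^{-2\eta p^*} < 1$ we have $1 + re^{-2\eta p^*} \le (1 - re^{-2\eta p^*})^{-1}$, giving the claimed $re^{-2\eta p^*}$-uniformity. (The crude bound $2^r - 1 < r$ is false for $r \ge 2$; instead I would just keep the factor $2^r-1$ or, more simply, note the total-variation-type bound and feed it through — actually the clean way is the union bound $\sum_{\vec c \neq \vec 0} e^{-2\eta p^*} = (2^r-1)e^{-2\eta p^*}$, and if the lemma really wants the constant $r$ one should instead argue more carefully; I will follow whatever normalization the paper fixes and present the character-sum bound, noting that the stated hypothesis $re^{-2\eta p^*}<1$ is what is needed.)

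The main obstacle, and really the only nontrivial point, is the combinatorial claim that the symmetric difference $S(\vec c) = \triangle_{j: c_j = 1} A_j$ has size at least $\eta$ for every nonzero $\vec c$. The argument is: let $k = \max\{j : c_j = 1\}$. By the $\eta$-layered property, the set $A_k \setminus (A_{k+1} \cup \dots \cup A_r)$ has at least $\eta$ elements. Fix such an element $x$. Then $x \in A_k$, and for every $j > k$ we have $x \notin A_j$; moreover $c_j = 0$ for all $j > k$ by maximality of $k$. Therefore, among the sets $\{A_j : c_j = 1\}$, the element $x$ belongs to $A_k$ and possibly to some $A_j$ with $j < k$ — but crucially, whether $x$ lies in an odd or even number of those sets, I need to argue $x$ lands in the symmetric difference. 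This is where one must be slightly careful: $x$ need not be in $S(\vec c)$ if it happens to lie in an even number of the selected $A_j$'s with $j \le k$. The fix is to instead choose $x \in A_k \setminus (A_{k+1}\cup\dots\cup A_r)$ — wait, that still doesn't control membership in earlier $A_j$.

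Let me reconsider: the correct statement uses that $\eta$-layered gives $|A_j \setminus (A_{j+1}\cup\dots\cup A_r)| \ge \eta$, and one should process indices from the top. Actually the clean claim is about the \emph{multiset} parity: I want $|S(\vec c)| \ge \eta$ where now the relevant elements are those in $A_k$ but in none of $A_{k+1},\dots,A_r$ — these contribute to $S(\vec c)$ with parity depending only on their membership in $A_j$ for $j \le k$. Hmm, this genuinely requires the stronger observation that we can track a \emph{single} designated set. The right approach: I would instead prove, by a small downward induction on the largest active index, that $S(\vec c) \supseteq (A_k \setminus \bigcup_{j \neq k, c_j = 1} A_j)$ is not quite it either. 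The honest resolution is that we do NOT get $|S(\vec c)|\ge\eta$ for arbitrary $\vec c$ from $\eta$-layered alone in the naive way; rather, the paper's definition of $\eta$-layered is tailored so that $A_k \setminus (A_{k+1} \cup \cdots \cup A_r)$ is disjoint from all $A_j$ with $j > k$, and since $c_j = 0$ for $j > k$, every element of this set that additionally avoids $A_1,\dots,A_{k-1}$ lies in $S(\vec c)$ with odd multiplicity $1$. But elements need not avoid the earlier sets. I therefore expect the actual proof to either (a) strengthen to: for the \emph{top} index $k$, the set $A_k\setminus(A_{k+1}\cup\dots\cup A_r)$ has $\ge\eta$ elements, each appearing in $S(\vec c)$ iff it appears in an even/odd number of $\{A_j : j < k, c_j = 1\}$ — and then a separate averaging/pairing argument shows at least... no. The cleanest correct route, which I would adopt, is: condition on $X_i$ for all $i \notin A_k\setminus(A_{k+1}\cup\dots\cup A_r)$; then $\sum_{i\in S(\vec c)} X_i \pmod 2$ equals a fixed constant plus $\sum_{i \in T} X_i$ where $T = \{x \in A_k\setminus(A_{k+1}\cup\dots\cup A_r) : x \text{ in odd \# of } A_j, j<k, c_j=1\}$, and... this still may be small. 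I will defer to the paper's own handling here — the essential and hardest step is precisely this combinatorial lemma bounding $|S(\vec c)|$ or equivalently bounding the character, and everything else is the routine Fourier inversion above.
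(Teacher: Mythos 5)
Your Fourier/character-sum approach is genuinely different from the paper's proof, which instead argues by induction on $r$: condition on the event $\mathcal{E} = \{Y_j = s_j \text{ for } 2\le j\le r\}$, invoke the inductive hypothesis on the $\eta$-layered tail $(A_2,\dots,A_r)$, and observe that the variables $\{X_i : i \in A_1 \setminus (A_2\cup\dots\cup A_r)\}$ --- of which there are at least $\eta$ --- remain fresh iid Bernoullis independent of $\mathcal{E}$, so one application of \cref{lem:parity-close-to-uniform} handles $Y_1\mid\mathcal{E}$. Before getting to the real problem: the combinatorial step you got stuck on is actually fine once you take $k=\min\{j:c_j=1\}$ rather than $\max$. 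For any $x\in A_k\setminus(A_{k+1}\cup\dots\cup A_r)$ one has $x\in A_k$, $x\notin A_j$ for $j>k$ by the layered condition, and $c_j=0$ for $j<k$ by minimality, so $x$ lies in exactly one selected set and hence in $S(\vec c)$; thus $|S(\vec c)|\geq\eta$ for every nonzero $\vec c$.

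The genuine gap is the quantitative one you flagged and then waved away. Fourier inversion with the union bound gives $|\PP[\vec Y=\vec s]-2^{-r}|\le 2^{-r}(2^r-1)e^{-2\eta p^*}$, i.e.\ $(2^r-1)e^{-2\eta p^*}$-uniformity, whereas the lemma asserts $r\,e^{-2\eta p^*}$-uniformity. These are not interchangeable: in the paper's applications (see the proofs of \cref{claim:parS1-close-to-uniform} and \cref{claim:G'-close-to-uniform} via \cref{cor:transformed}), the number of sets $r$ is polynomial in $n$ while $\eta=\Theta(n^{1/2-\alpha})$, so $r\,e^{-2\eta p^*}=e^{-\Theta(n^{1/2-\alpha})}$ is small and useful, but $(2^r-1)e^{-2\eta p^*}$ is astronomically large and the conclusion becomes vacuous. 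Getting the additive loss $r$ rather than the multiplicative blow-up $2^r$ is exactly what the paper's conditioning induction achieves (one additive error $e^{-2\eta p^*}$ per peeled-off coordinate), and a single per-character bound of $e^{-2\eta p^*}$ over $2^r-1$ characters cannot reproduce it. So as written your proposal proves a strictly weaker statement than the lemma and would not support the downstream argument.
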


\begin{proof}
We prove by induction on $r$. 
For $r=1$, since $|A_1|\ge \eta$, the statement follows from \cref{lem:parity-close-to-uniform}. Inductively, suppose $r\geq 2$ and the lemma is true for all $1\leq r' \leq r-1$. Let $(A_1,\dots, A_r)$ be an $\eta$-layered sequence with $A_j\subseteq [t]$ for every $j$. Consider any $s_1,\dots, s_r\in \{0,1\}$, and let $\cE$ denote the event that $Y_j = s_j$ for all $j\in \{2,\dots, r\}$. Since $(A_2,\dots, A_r)$ is $\eta$-layered, we know from the inductive hypothesis that
\[
\PP[\cE]\in \lbr{\lpr{1-(r-1)e^{-2 \eta p^*}}\frac{1}{2^{r-1}},\,  \lpr{1-(r-1)e^{-2\eta p^*}}^{-1}\frac{1}{2^{r-1}}}.
\]
Thus, with $T := A_2\cup \dots \cup A_r$, we have
    \begin{align*}
        \PP\lbr{Y_1 = s_1,\dots, Y_r = s_r}
       & = \PP\lbr{Y_1 = s_1,\, \cE}\\
        &= \PP\lbr{Y_1 = s_1 \mid \cE}\cdot \PP[\cE]\\
        &= \PP\lbr{\sum_{i\in A_1\setminus T}X_i = \sum_{i\in A_1\cap T}X_i + s_1 \pmod 2\,\middle\vert\,  \cE}\cdot \PP[\cE]\\
        &\ge \frac{1}{2}\lpr{1-e^{-2\eta p^*}} \lpr{1-(r-1)e^{-2\eta p^*}}\frac{1}{2^{r-1}}\\
        &\ge  \lpr{1-re^{-2\eta p^*}}\frac{1}{2^r}.
    \end{align*}
    The second to last inequality follows from \cref{lem:parity-close-to-uniform}, as  $\{X_i :i\in A_1\setminus T\}$ is independent from $\mathcal E$ and is a set of at least $\eta$  i.i.d. Bernoulli variables with probability $p$. By the same argument, we also know that  $\PP\lbr{Y_1 = s_1,\dots, Y_r = s_r}\leq \lpr{1-re^{-2\eta p^*}}^{-1}\frac{1}{2^r}$. Thus $(Y_1,\dots,Y_r)$ is $re^{-2\eta p^*}$-uniform.
\end{proof}

    When we apply \cref{lemma:AlmostUniform} in our problem, each of $X_1,\dots,X_t$ will be an indicator variable of whether the edge $uv$ is present in $G$ for a pair of vertices $u$ and $v$; the sets $A_1,\dots,A_r$ will be sets of vertex pairs in $\binom{V}{2}$.

Sometimes we would like to show a result like \cref{lemma:AlmostUniform} where the ordered set sequence $(A'_1,\dots,A'_r)$ is not $\eta$-layered at the beginning. The following observation shows that we may first apply an invertible linear transformation $\FF_2^r\to\FF_2^r$ to the sets by viewing each of them as a vector in $\FF_2^r$, and apply \cref{lemma:AlmostUniform} to the resulting collection of sets.

\begin{observation}\label{obs:LinearTrans}
    Suppose $X_1,\dots,X_t\in\{0,1\}$.
    Let $A_1,\dots,A_r\subseteq [t]$. For every $1\leq j\leq r$, let 
    \[Y_j = \sum_{i\in A_j}X_i  \pmod 2.\]
    Let $T:\FF_2^r\rightarrow\FF_2^r$ be an invertible linear transformation and let $A'_1,\dots,A'_r$ be the sets given by 
    \[(\mathbbm{1}_{A'_1}(i),\dots,\mathbbm{1}_{A'_r}(i))=T(\mathbbm{1}_{A_1}(i),\dots,\mathbbm{1}_{A_r}(i))\]
    for all $i\in [t]$. For every $1\leq j\leq r$, let
    \[Y'_j = \sum_{i\in A'_j}X_i  \pmod 2.\]
    Then for all $s_1,\dots,s_r\in\{0,1\}$, the event $(Y_1,\dots,Y_r)=(s_1,\dots,s_r)$ is the same as the event $(Y'_1,\dots,Y'_r)=T(s_1,\dots,s_r)$.
\end{observation}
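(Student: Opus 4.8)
The plan is to prove the stronger pointwise identity $(Y'_1,\dots,Y'_r) = T(Y_1,\dots,Y_r)$, valid for every fixed assignment of values to $X_1,\dots,X_t\in\{0,1\}$, and then to deduce the statement about events from the fact that $T$ is a bijection of $\FF_2^r$.

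First I would package the indicators into vectors over $\FF_2$. For each $i\in[t]$ set $\vec a(i):=(\mathbbm 1_{A_1}(i),\dots,\mathbbm 1_{A_r}(i))\in\FF_2^r$, so that by the definition of $A'_1,\dots,A'_r$ we have $\vec a'(i):=(\mathbbm 1_{A'_1}(i),\dots,\mathbbm 1_{A'_r}(i))=T(\vec a(i))$ for every $i$. Reading off the definition of $Y_j$, in $\FF_2$ we have $Y_j=\sum_{i\in A_j}X_i=\sum_{i=1}^t\mathbbm 1_{A_j}(i)\,X_i$, and therefore, viewing the $X_i$ as scalars in $\FF_2$,
\[(Y_1,\dots,Y_r)=\sum_{i=1}^t X_i\,\vec a(i)\qquad\text{and likewise}\qquad (Y'_1,\dots,Y'_r)=\sum_{i=1}^t X_i\,\vec a'(i).\]

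Next I would use $\FF_2$-linearity of $T$: since each $X_i\in\FF_2$,
\[(Y'_1,\dots,Y'_r)=\sum_{i=1}^t X_i\,\vec a'(i)=\sum_{i=1}^t X_i\,T(\vec a(i))=T\!\left(\sum_{i=1}^t X_i\,\vec a(i)\right)=T(Y_1,\dots,Y_r).\]
Finally, since $T$ is invertible it is in particular injective, so for any $(s_1,\dots,s_r)\in\FF_2^r$ the equality $(Y_1,\dots,Y_r)=(s_1,\dots,s_r)$ holds if and only if $T(Y_1,\dots,Y_r)=T(s_1,\dots,s_r)$, i.e.\ if and only if $(Y'_1,\dots,Y'_r)=T(s_1,\dots,s_r)$. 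This is exactly the claimed identification of events.

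I do not expect any real obstacle: the content is just that $(Y'_1,\dots,Y'_r)$ is obtained by pushing the fixed linear map $T$ through the $\FF_2$-linear dependence of $(Y_1,\dots,Y_r)$ on the $X_i$. The only points requiring a bit of care are (a) carrying out all arithmetic in $\FF_2$, so that the sums defining the $Y_j$ and $Y'_j$ are genuinely linear in the $X_i$, and (b) invoking invertibility of $T$ at the very end — without it one only gets the implication $(Y_1,\dots,Y_r)=(s_1,\dots,s_r)\Rightarrow(Y'_1,\dots,Y'_r)=T(s_1,\dots,s_r)$ rather than an equality of events.
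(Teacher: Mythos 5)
Your proof is correct, and since the paper states this as an observation without supplying an argument, the computation you give — packaging the indicators into vectors, establishing the pointwise identity $(Y'_1,\dots,Y'_r)=T(Y_1,\dots,Y_r)$ by $\FF_2$-linearity, and then invoking injectivity of $T$ — is exactly the intended (and essentially the only) route.
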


Motivated by this observation, we have the following definition and corollary.
\begin{definition}[$\eta$-transformed set sequence]\label{def:linear-transform}
    We say that an ordered set sequence $(A'_1,\dots,A'_r)$ is $\eta$-transformed if there exists an $\eta$-layered set sequence $(A_1,\dots,A_r)$ and invertible linear transformation $T:\FF_2^r\to\FF_2^r$ such that 
    \[(\mathbbm{1}_{A'_1}(i),\dots,\mathbbm{1}_{A'_r}(i))=T(\mathbbm{1}_{A_1}(i),\dots,\mathbbm{1}_{A_r}(i))\]
    holds for all $i$.
\end{definition}
\begin{corollary}\label{cor:transformed}
    Under the same setting as in \cref{lemma:AlmostUniform}, if $(A_1,\dots,A_r)$ is $\eta$-transformed instead of $\eta$-layered, we can still conclude that $(Y_1,\dots,Y_r)$ is $re^{-2\eta p^*}$-uniform.
\end{corollary}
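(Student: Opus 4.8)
The plan is to reduce \cref{cor:transformed} directly to \cref{lemma:AlmostUniform} via \cref{obs:LinearTrans}. Since $(A_1,\dots,A_r)$ is $\eta$-transformed, by \cref{def:linear-transform} there is an $\eta$-layered sequence $(\tilde A_1,\dots,\tilde A_r)$ and an invertible linear map $T\colon\FF_2^r\to\FF_2^r$ with $(\mathbbm{1}_{A_1}(i),\dots,\mathbbm{1}_{A_r}(i)) = T(\mathbbm{1}_{\tilde A_1}(i),\dots,\mathbbm{1}_{\tilde A_r}(i))$ for every $i\in[t]$. (I am relabeling so that the ``nice'' layered sequence is $\tilde A_j$ and the given one is $A_j$; in \cref{obs:LinearTrans} the roles of primed and unprimed are swapped, but only the fact that $T$ is a bijection on $\FF_2^r$ matters.) Define $\tilde Y_j = \sum_{i\in\tilde A_j}X_i\pmod 2$ and, as in the statement, $Y_j = \sum_{i\in A_j}X_i\pmod 2$.

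First I would apply \cref{lemma:AlmostUniform} to the $\eta$-layered sequence $(\tilde A_1,\dots,\tilde A_r)$: since $re^{-2\eta p^*}<1$ by hypothesis, the random vector $(\tilde Y_1,\dots,\tilde Y_r)$ is $re^{-2\eta p^*}$-uniform, i.e.\ for every $\vec t\in\bin^r$ we have $\PP[(\tilde Y_1,\dots,\tilde Y_r)=\vec t]\in[(1-re^{-2\eta p^*})2^{-r},(1-re^{-2\eta p^*})^{-1}2^{-r}]$. Next, by \cref{obs:LinearTrans} applied with the layered sequence as the ``unprimed'' sets and $(A_1,\dots,A_r)$ as the ``primed'' sets, the event $(Y_1,\dots,Y_r)=T(\vec t)$ coincides with the event $(\tilde Y_1,\dots,\tilde Y_r)=\vec t$. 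Hence $\PP[(Y_1,\dots,Y_r)=T(\vec t)] = \PP[(\tilde Y_1,\dots,\tilde Y_r)=\vec t]$ lies in the same interval for every $\vec t$.

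Finally, since $T$ is a bijection of $\bin^r=\FF_2^r$, as $\vec t$ ranges over $\bin^r$ the vector $T(\vec t)$ ranges over all of $\bin^r$; equivalently, writing an arbitrary $\vec s\in\bin^r$ as $\vec s = T(T^{-1}\vec s)$, we get $\PP[(Y_1,\dots,Y_r)=\vec s]\in[(1-re^{-2\eta p^*})2^{-r},(1-re^{-2\eta p^*})^{-1}2^{-r}]$. This is exactly the definition of $(Y_1,\dots,Y_r)$ being $re^{-2\eta p^*}$-uniform, which is the claim. There is no real obstacle here: the corollary is essentially a bookkeeping consequence of the observation, the only points needing a moment's care being (i) matching up which sequence in \cref{obs:LinearTrans} plays which role, and (ii) noting that a uniformity bound is invariant under permuting the coordinate labels by an invertible linear (in particular bijective) map on $\FF_2^r$. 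If desired, one could also inline the one-line proof of \cref{obs:LinearTrans} itself, which just says that $\sum_{i\in A_j}X_i$ is the $j$-th coordinate of $T$ applied to the vector $(\sum_{i\in\tilde A_1}X_i,\dots,\sum_{i\in\tilde A_r}X_i)$ over $\FF_2$, by linearity of the indicator decomposition and of $T$.
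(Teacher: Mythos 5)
Your proof is correct and is exactly the intended argument: the paper states \cref{cor:transformed} without proof precisely because it is an immediate consequence of \cref{lemma:AlmostUniform} together with \cref{obs:LinearTrans}, and your write-up spells out that deduction (including the minor bookkeeping of swapping primed/unprimed roles and using that $T$ is a bijection on $\FF_2^r$). Nothing further is needed.
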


The following three lemmas will be used to prove a number of results in \cref{sec:uw-removal,sec:algorithm}. These results imply that, after revealing some partial information in the removal procedure, we still have enough randomness in the remaining graph. In particular, since we need to reveal the degree parities of certain vertices during the procedure, we will use \cref{lem:Y-epsUniform,lem:evensumepsUniform} to show that the resulting distribution of the graph is close to the original distribution. In the main proof, these two lemmas will be applied with the Bernoulli random variables $X_1,\dots,X_t$ being the indicator variables of vertex pairs in $\binom{V}{2}$ (or potential edges as defined later) and the parity vectors $\vec Y,\vec Z$ being the degree parities of vertices.

\begin{lemma}\label{lem:Y-epsUniform}
    Let $X_1,\dots,X_t$ be Bernoulli random variables (not necessarily independent or having the same law). Let $1\le k<t$ and $T:=\{k+1,\dots,t\}$.
    Suppose the random variables \linebreak$\vec{X} :=(X_1,\dots,X_{k})$ and $\vec X':=(X_{k+1},\dots,X_t)$ are independent.
    
    For subsets $A_1,\dots,A_r\subseteq [t]$, define the random variables $\vec Y=(Y_1,\dots,Y_r)\in\bin^r$ and \linebreak$\vec Z=(Z_1,\dots,Z_r)\in \bin^r$ by
    \[
    Y_j=\sum_{i\in A_j}X_i\pmod{2},\quad Z_j=\sum_{i\in A_j\cap T}X_i\pmod{2}\qquad \text{for $j=1, \dots, r$.}
    \]
    If $\vec{Z}$ is $\eps$-uniform, then we have the following:
    \begin{enumerate}
        \item $\vec{Y}$ is $\eps$-uniform.  
        \item If $\eps< 1/2$, then $\vec X$ is $2\veps$-affected by $\vec Y$.
    \end{enumerate}
\end{lemma}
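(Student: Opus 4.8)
\textbf{Proof proposal for \cref{lem:Y-epsUniform}.}

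The plan is to reduce both statements to the $\eps$-uniformity of $\vec{Z}$ by conditioning on $\vec{X}=(X_1,\dots,X_k)$ and exploiting the independence of $\vec{X}$ and $\vec{X}'$. The key observation is that for each $j$ we can split $Y_j = \bigl(\sum_{i\in A_j\setminus T}X_i\bigr) + Z_j \pmod 2$, where the first summand depends only on $\vec X$ and the second only on $\vec X'$. So fixing $\vec X = \vec x$, the vector $\vec Y$ is just $\vec Z$ shifted by a deterministic (given $\vec x$) vector $\vec c(\vec x)\in\bin^r$; since translation by a fixed vector is a bijection on $\bin^r$, the conditional distribution $(\vec Y\mid \vec X=\vec x)$ is $\eps$-uniform whenever $\vec Z$ is (note $\vec Z$ is independent of $\vec X$, so $(\vec Z\mid \vec X=\vec x)=\vec Z$, still $\eps$-uniform).

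For part (1), I would then average over $\vec x$: $\PP[\vec Y=\vec s] = \sum_{\vec x}\PP[\vec X=\vec x]\,\PP[\vec Z = \vec s - \vec c(\vec x)]$, and each inner probability lies in $[(1-\eps)2^{-r},(1-\eps)^{-1}2^{-r}]$, so the convex combination does too. Hence $\vec Y$ is $\eps$-uniform. For part (2), I want to bound $\PP[\vec X=\vec x\mid \vec Y=\vec s]/\PP[\vec X=\vec x]$. By Bayes, this ratio equals $\PP[\vec Y=\vec s\mid\vec X=\vec x]/\PP[\vec Y=\vec s]$. The numerator is $\PP[\vec Z=\vec s-\vec c(\vec x)]\in[(1-\eps)2^{-r},(1-\eps)^{-1}2^{-r}]$ and the denominator is in the same interval by part (1), so the ratio lies in $[(1-\eps)^2,(1-\eps)^{-2}]$. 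Finally I would check that $(1-\eps)^2\ge 1-2\eps$ (true always) and $(1-\eps)^{-2}\le (1-2\eps)^{-1}$ (this needs $\eps<1/2$, since it rearranges to $1-2\eps\le(1-\eps)^2=1-2\eps+\eps^2$, which is immediate, but we need $1-2\eps>0$ for the reciprocal inequality to be meaningful/correctly oriented), giving that $\vec X$ is $2\eps$-affected by $\vec Y$.

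I do not expect any step to be a genuine obstacle; the main care is bookkeeping. The one subtlety worth stating explicitly is that $(\vec Z\mid\vec X=\vec x)$ really is $\eps$-uniform for every $\vec x$ in the support of $\vec X$ — this is where independence of $\vec X$ and $\vec X'$ is used, since $\vec Z$ is a function of $\vec X'$ alone. A second point is making sure the translation vector $\vec c(\vec x)$ is well-defined: $c_j(\vec x) = \sum_{i\in A_j\setminus T}x_i\pmod 2$ depends only on coordinates $i\le k$, which is exactly what $\vec X$ records. With these two remarks in place, parts (1) and (2) are short computations as sketched above.
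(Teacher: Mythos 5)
Your proposal is correct and matches the paper's proof essentially step for step: the same splitting $Y_j = \sum_{i\in A_j\setminus T}X_i + Z_j$, the same use of independence of $\vec X$ and $\vec Z$, the same averaging for part (1), and the same Bayes-ratio computation yielding $(1-\eps)^2 \ge 1-2\eps$ for part (2). The only cosmetic difference is that you phrase the dependence on $\vec x$ as a translation by $\vec c(\vec x)$ while the paper writes out the shifted target $\vec z(\vec x,\vec y)$ explicitly; these are the same thing.
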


\begin{proof} Since $\vec Z$ is a function of $\vec X'$, and $\vec X$ and $\vec X'$ are independent, we know that $\vec X$ and $\vec Z$ are independent. Fix $\vec x\in\{0,1\}^k$ and $\vec y\in\{0,1\}^r$. Observe that given $\vec X=\vec x$, the statement that $\vec Y=\vec y$ is equivalent to the statement that $\vec Z=\vec z$, where $\vec z=\vec z(\vec x,\vec y)\in\{0,1\}^r$ is the vector defined by 
\[
z_j:=y_j-\sum_{i\in A_j\cap [k]}x_i\pmod 2 \qquad \forall j\in[r].
\]
Thus for every $\vec y\in\{0,1\}^r$, we have
    \begin{align*}
        \PP[\vec{Y} = \vec{y}] &= \sum_{\vec{x} \in \bin^k}\PP[\vec{Y} = \vec{y} \mid \vec{X} = \vec{x}]\cdot \PP[\vec{X} = \vec{x}]=\sum_{\vec{x}\in \bin^k}\PP[\vec{Z} = \vec z \mid \vec{X} = \vec{x}]\cdot \PP[\vec{X} = \vec{x}]\\
        &= \sum_{\vec{x}\in \bin^k}\PP[\vec{Z} = \vec{z}]\PP[\vec{X} = \vec{x}]\ge (1-\eps)\frac{1}{2^r}\sum_{\vec{x} \in \bin^k}\PP[\vec{X} = \vec{x}] = (1-\eps)\frac{1}{2^r},
    \end{align*}
    where in the second to last step we used the assumption that $\vec{Z}$ is $\eps$-uniform. By the same argument, we have the upper bound  $\PP[\vec{Y} = \vec{y}]\leq (1-\eps)^{-1}\frac{1}{2^r}$ as well. Since this holds for all $\vec y\in\{0,1\}^r$, we get that  $Y_1,\dots, Y_r$ are $\eps$-uniform. This proves (1).

We now prove (2). For every $\vec x\in\bin^k$ and $\vec y\in\bin^r$, with $\vec z=\vec z(\vec x,\vec y)$ defined above, we have
    \begin{align*}
        \PP[\vec{X} = \vec{x} \mid \vec{Y} = \vec{y}] = \frac{\PP[\vec{X} = \vec{x}, \vec{Y}= \vec{y}]}{\PP[\vec{Y} = \vec{y}]} = \frac{\PP[\vec{X} = \vec{x}, \vec{Z} = \vec{z}]}{\PP[\vec{Y} = \vec{y}]}=\frac{\PP[\vec{X} = \vec{x}]\cdot \PP[ \vec{Z} = \vec{z}]}{\PP[\vec{Y} = \vec{y}]}.
    \end{align*}
    By (1), $\vec{Y}$ is $\eps$-uniform, so we can conclude that 
    \[\frac{\PP[\vec{X} = \vec{x}]\cdot \PP[ \vec{Z} = \vec{z}]}{\PP[\vec{Y} = \vec{y}]}\ge \frac{(1-\eps)2^{-r}}{(1-\eps)^{-1}2^{-r}}\PP[\vec{X} = \vec{x}]\ge (1-2\eps)\PP[\vec{X} = \vec{x}].\]
    The upper bound follows symmetrically.
\end{proof}

We also need the following variant of \cref{lem:Y-epsUniform} for fix-parity distributions.

\begin{lemma}\label{lem:evensumepsUniform}
    Let $X_1,\dots,X_t$ be Bernoulli random variables (not necessarily independent or having the same law). 
    Let $1\le k<t$ and $T:=\{k+1,\dots,t\}$.
    Suppose the random variables \linebreak$\vec{X} :=(X_1,\dots,X_{k})$ and $\vec X':=(X_{k+1},\dots,X_t)$ are independent.
    
    For subsets $A_1,\dots,A_r\subseteq [t]$, define the random variables $\vec Y=(Y_1,\dots,Y_r)\in\bin^r$ and \linebreak$\vec Z=(Z_1,\dots,Z_r)\in \bin^r$ by
    \[
    Y_j=\sum_{i\in A_j}X_i\pmod{2},\quad Z_j=\sum_{i\in A_j\cap T}X_j\pmod{2}\qquad \text{for $j=1, \dots, r$.}
    \]
    Finally, let $W=\sum_{j=1}^r Y_j\pmod{2}$.
    Suppose $\vec{Z}$ is fix-parity $\eps$-uniform, with $\veps< 1/2$ and parity $s=\sum_{j=1}^rZ_j\pmod{2}$. Then $(\vec{X}\mid \vec{Y})$ is $2\veps$-close to $(\vec{X}\mid W)$.
\end{lemma}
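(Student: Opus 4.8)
The plan is to mimic the computation in the proof of Lemma \ref{lem:Y-epsUniform}, but carefully track the extra conditioning that a fix-parity (rather than fully uniform) distribution of $\vec Z$ imposes. Fix $\vec x\in\bin^k$ and $\vec y\in\bin^r$, and as before set $\vec z=\vec z(\vec x,\vec y)$ by $z_j=y_j-\sum_{i\in A_j\cap[k]}x_i\pmod 2$. The first observation is that the map $\vec y\mapsto \vec z$ is, for each fixed $\vec x$, an affine bijection of $\bin^r$ that preserves the total parity up to a fixed shift: writing $c(\vec x):=\sum_{j=1}^r\sum_{i\in A_j\cap[k]}x_i\pmod 2$, we have $\sum_j z_j = \sum_j y_j - c(\vec x)\pmod 2$, i.e. $W=\sum_j Y_j$ and $\sum_j Z_j$ differ by the constant $c(\vec x)$ once $\vec X=\vec x$ is fixed. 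Since $\vec Z$ is fix-parity $\eps$-uniform with support-parity $s$, this means that given $\vec X=\vec x$, the random vector $\vec Y$ is supported exactly on $\{\vec y:\sum_j y_j = s+c(\vec x)\}$, i.e. $W=s+c(\vec x)\pmod 2$ is determined by $\vec x$, and on its support $(\vec Y\mid\vec X=\vec x)$ is $\eps$-close to uniform on that affine subspace.

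Next I would run the same conditional-probability manipulation as in Lemma \ref{lem:Y-epsUniform}(2). Using $\vec X\perp \vec Z$ (because $\vec Z$ is a function of $\vec X'$, which is independent of $\vec X$) and the equivalence ``$\vec Y=\vec y$ given $\vec X=\vec x$'' $\iff$ ``$\vec Z=\vec z$'', write
\[
\PP[\vec X=\vec x\mid \vec Y=\vec y]=\frac{\PP[\vec X=\vec x]\,\PP[\vec Z=\vec z]}{\PP[\vec Y=\vec y]},
\]
valid whenever $\vec y$ is in the support, i.e. whenever $\sum_j y_j=s+c(\vec x)$. Now I need the analogue ``$\vec Y$ is fix-parity $\eps$-close to uniform'' to control the denominator: summing over $\vec x$,
\[
\PP[\vec Y=\vec y]=\sum_{\vec x:\,c(\vec x)=\sum_j y_j - s}\PP[\vec Z=\vec z(\vec x,\vec y)]\,\PP[\vec X=\vec x],
\]
and since each $\PP[\vec Z=\vec z(\vec x,\vec y)]\in[(1-\eps)2^{-(r-1)},(1-\eps)^{-1}2^{-(r-1)}]$ on this index set (all such $\vec z$ have total parity $s$, hence lie in the support of the fix-parity distribution), we get $\PP[\vec Y=\vec y]\in[(1-\eps)2^{-(r-1)},(1-\eps)^{-1}2^{-(r-1)}]\cdot\PP[W=\sum_jy_j]$, because $\{\vec x:c(\vec x)=\sum_jy_j-s\}$ is exactly the event $\{W=\sum_jy_j\}$. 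Plugging this back, the factors of $2^{-(r-1)}$ and the event $\{W=\sum_jy_j\}=\{\vec X\in\text{(that set)}\}$ cancel against numerator structure, yielding
\[
\PP[\vec X=\vec x\mid \vec Y=\vec y]\in\Big[(1-\eps)^2,\,(1-\eps)^{-2}\Big]\cdot\frac{\PP[\vec X=\vec x]}{\PP[W=\textstyle\sum_j y_j]}=\big[(1-\eps)^2,(1-\eps)^{-2}\big]\cdot\PP[\vec X=\vec x\mid W=\textstyle\sum_j y_j].
\]
Since $(1-\eps)^2\ge 1-2\eps$ and $(1-\eps)^{-2}\le (1-2\eps)^{-1}$ for $\eps<1/2$, this is exactly the assertion that $(\vec X\mid \vec Y=\vec y)$ is $2\eps$-close to $(\vec X\mid W=\sum_j y_j)$; and since every $\vec y$ in the support of $\vec Y$ satisfies $W=\sum_jy_j$ deterministically (from the first paragraph), this is precisely ``$(\vec X\mid\vec Y)$ is $2\eps$-close to $(\vec X\mid W)$''. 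A small point to verify at the start is that $\supp(\vec X\mid \vec Y=\vec y)=\supp(\vec X\mid W=\sum_j y_j)$, which follows because $\PP[\vec X=\vec x,\vec Y=\vec y]>0\iff \PP[\vec X=\vec x]>0$ and $c(\vec x)=\sum_jy_j-s$ and $\PP[\vec Z=\vec z(\vec x,\vec y)]>0$, the last being automatic since $\vec z(\vec x,\vec y)$ has the right total parity $s$.

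The main obstacle I anticipate is purely bookkeeping: making sure the parity-shift constant $c(\vec x)$ is handled consistently so that the event $\{W=\sum_jy_j\}$ really coincides with the index set $\{\vec x:c(\vec x)=\sum_jy_j-s\}$ over which the denominator sum ranges, and that $s=\sum_j Z_j$ is used with the correct orientation (the lemma fixes $s$ as the support-parity of the fix-parity $\eps$-uniform $\vec Z$). There is no probabilistic difficulty beyond what Lemma \ref{lem:Y-epsUniform} already contains — the independence $\vec X\perp\vec Z$ does all the heavy lifting — so the proof is a one-screen computation once the affine-subspace picture is set up. I would present it by first stating the deterministic fact that $W$ is a function of $\vec X$ (namely $W=s+c(\vec X)$), then the two displays above, then the conversion $(1-\eps)^{\pm2}\mapsto(1-2\eps)^{\pm1}$.
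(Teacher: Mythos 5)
Your proof is correct, but it takes a genuinely different route from the paper. The paper first observes that $W=\sum_j\sum_{i\in A_j\setminus T}X_i+s$ is a deterministic function of $\vec X$ (so $(\vec X\mid W)$ stays independent of $\vec X'$), and then applies Lemma~\ref{lem:Y-epsUniform}(2) to the $(r-1)$-tuple $(Y_1,\dots,Y_{r-1})$ with $(\vec X\mid W)$ playing the role of $\vec X$, using the fact that dropping one coordinate from a fix-parity $\eps$-uniform $\vec Z$ gives an $\eps$-uniform $(Z_1,\dots,Z_{r-1})$; the conclusion $(\vec X\mid\vec Y)=(\vec X\mid(Y_1,\dots,Y_{r-1}),W)$ being $2\eps$-close to $(\vec X\mid W)$ falls out immediately. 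You instead redo the Bayes-type computation from the proof of Lemma~\ref{lem:Y-epsUniform}(2) from scratch, carefully tracking the parity constraint $\sum_j y_j=s+c(\vec x)$ that the fix-parity hypothesis imposes on the support, and bounding $\PP[\vec Y=\vec y]$ by a $(1-\eps)^{\pm1}\cdot 2^{-(r-1)}\cdot\PP[W=\sum_jy_j]$ factor rather than $(1-\eps)^{\pm1}\cdot 2^{-r}$. Both approaches yield the clean ratio bound $(1-\eps)^{\pm2}$ and hence $2\eps$-closeness. What the paper's reduction buys is brevity and re-use of an already-established lemma; what your direct computation buys is transparency about where the fix-parity structure enters (it literally shows the denominator picks up the extra factor $\PP[W=\sum_j y_j]$ that then cancels against the definition of $\PP[\vec X=\vec x\mid W]$). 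The one thing I'd tighten in a final write-up is the phrasing around the identity for $\PP[\vec X=\vec x\mid\vec Y=\vec y]$: the condition $\sum_j y_j=s+c(\vec x)$ governs membership of $\vec x$ in $\supp(\vec X\mid \vec Y=\vec y)$, not membership of $\vec y$ in $\supp(\vec Y)$; the identity itself holds for all $\vec y\in\supp(\vec Y)$, with both sides equal to $0$ when the parity condition fails.
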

\begin{proof}
    Note that 
    \[W\equiv\sum_{j=1}^r\sum_{i\in A_j\setminus T}X_i+\sum_{j=1}^rZ_j\equiv\sum_{j=1}^r\sum_{i\in A_j\setminus T}X_i+s\pmod{2}.\]
    Note that $s$ is deterministic, so $W$ only depends on $\vec{X}$. Thus the concatenated vector $(\vec{X}, W)$ is independent from $\vec{X}'$. 
    
    For each $w\in\{0,1\}$, we apply \cref{lem:Y-epsUniform} with $(Y_1,\dots,Y_{r-1})$ in place of $\vec{Y}$, $(\vec{X}\mid W=w)$ in place of $\vec{X}$, and $\vec X'$ as it is. We already showed that $(\vec{X}\mid W=w)$ and $\vec X'$ are independent. Furthermore, since $\vec Z=(Z_1,\dots,Z_r)$ is fix-parity $\eps$-uniform, it follows that $(Z_1,\dots,Z_{r-1})$ is $\eps$-uniform.
    Thus, we may conclude from \cref{lem:Y-epsUniform} that $(\vec{X}\mid W=w)$ is $2\veps$-affected by $(Y_1,\dots,Y_{r-1})$. This shows that $((\vec X\mid W=w)\mid Y_1,\dots,Y_{r-1})$ is $2\veps$-close to $(\vec{X}\mid W=w)$. From the definition of $\varepsilon$-closeness, this implies that $(\vec X\mid W, Y_1,\dots,Y_{r-1})$ is $2\veps$-close to $(\vec{X}\mid W)$. Since $\vec Y$ and $(W,Y_1,\dots,Y_{r-1})$ uniquely determine each other, we get that $(\vec{X}\mid \vec Y)$ is $2\veps$-close to $(\vec{X}\mid W)$.
\end{proof}

For convenience, we define the following notion for describing sets with almost equal sizes.
\begin{definition}[Balanced sets]
Let $A_1,\dots,A_r$ be finite sets. We say that $A_1,\dots,A_r$ are balanced if $\abs{A_i},\abs{A_j}$ differ by at most $1$ for all $i,j\in [r]$.
\end{definition}

Finally, we have the following lemma that shows the degree parities of vertices in a $p$-random bipartite graph are close to being fix-parity uniform.

\begin{lemma}\label{lem:bipartite}
    Fix $p\in (0,1)$ and set $p^*=\min(p,1-p)$. There exists $\eta_0=\eta_0(p)$ such that the following holds. For any sufficiently large integer $\eta>\eta_0$, let $G=(V,E)$ be a random bipartite graph with bipartition $V = A\sqcup B$ such that $\eta\leq |A|, |B|\leq e^{\eta(p^*)^2/10}$, and every edge between $A, B$ is included independently with probability $p$. For $v\in V$ and $C\subseteq V$, let $\parity(v,C)\in\{0,1\}$ denote the parity of the number of edges of $G$ between $v$ and $C$. Then $((\parity(a, B))_{a\in A}, (\parity(b,A))_{b\in B}) \in\{0,1\}^{|A|+|B|}$ is fix-parity $e^{-\eta(p^*)^2/30}$-uniform.
\end{lemma}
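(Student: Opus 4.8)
The plan is to compute the Fourier transform of the random parity vector over $\FF_2^{A\sqcup B}$ directly, rather than to force this statement into the $\eta$-layered/$\eta$-transformed framework. Write $n_A=|A|$, $n_B=|B|$, let $x_{ab}=\mathbbm{1}[ab\in E]$ for $a\in A$, $b\in B$ (so the $x_{ab}$ are i.i.d.\ Bernoulli($p$)), and let $P=((\parity(a,B))_{a\in A},(\parity(b,A))_{b\in B})\in\{0,1\}^{A\sqcup B}$ be the vector in question. The first point is that $\langle \vec 1, P\rangle \equiv 2\,e(G)\equiv 0 \pmod 2$, so $P$ is supported on even-weight vectors; the fix-parity uniform distribution we will compare $P$ with is the uniform distribution $u$ on even-weight vectors of $\{0,1\}^{A\sqcup B}$.

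For a frequency $T\in\{0,1\}^{A\sqcup B}$, split $T=(T_a)_{a\in A}\sqcup(T_b)_{b\in B}$; using $\parity(a,B)=\sum_b x_{ab}$ and $\parity(b,A)=\sum_a x_{ab}$, one computes in $\FF_2$ that $\langle T,P\rangle=\sum_{a,b:\,T_a\ne T_b}x_{ab}$, whence by independence
\[\EE\big[(-1)^{\langle T,P\rangle}\big]=(1-2p)^{\,n(T)},\qquad n(T):=k(n_B-\ell)+(n_A-k)\ell,\]
where $k=|\{a:T_a=1\}|$, $\ell=|\{b:T_b=1\}|$; in particular $n(T)$ depends on $T$ only through $(k,\ell)$, and $n(T)=0$ exactly for $T\in\{\vec 0,\vec 1\}$. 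By Fourier inversion, for even-weight $s$ we get $\PP[P=s]=2^{-(n_A+n_B-1)}\big(1+\tfrac12\Delta(s)\big)$, where $\Delta(s):=\sum_{T\notin\{\vec 0,\vec 1\}}(-1)^{\langle T,s\rangle}(1-2p)^{n(T)}$, and $\PP[P=s]=0$ for odd-weight $s$. Thus it suffices to prove $|\Delta(s)|\le\eps:=e^{-\eta(p^*)^2/30}$ for all $s$, since this gives $\tfrac{\PP[u=s]}{\PP[P=s]}=(1+\tfrac12\Delta(s))^{-1}\in[1-\eps,(1-\eps)^{-1}]$, i.e.\ $P$ is $\eps$-close to $u$.

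To bound $|\Delta(s)|$, set $q:=|1-2p|=1-2p^*$ and group the frequencies by $(k,\ell)$:
\[|\Delta(s)|\le\sum_{(k,\ell)\notin\{(0,0),(n_A,n_B)\}}\binom{n_A}{k}\binom{n_B}{\ell}\,q^{\,n(k,\ell)}.\]
The heart of the argument is the lower bound $n(k,\ell)\ge\tfrac12\big(a\,n_B+b\,n_A\big)$ with $a=\min(k,n_A-k)$, $b=\min(\ell,n_B-\ell)$; this follows from a short case analysis, reducing to $k\le n_A/2$ via the symmetry $n(k,\ell)=n(n_A-k,n_B-\ell)$, handling $\ell\le n_B/2$ directly and $\ell>n_B/2$ via the identity $n(k,\ell)=(n_A-k)n_B-(n_A-2k)(n_B-\ell)\ge n_A n_B/2$. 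Combined with $\binom{n_A}{k}\le n_A^{\,a}$ and $\binom{n_B}{\ell}\le n_B^{\,b}$, each summand is at most $x^{\,a+b}$ where $x:=\max\!\big(n_Aq^{\,n_B/2},\,n_Bq^{\,n_A/2}\big)$. Since $q^{\,n_B/2}\le e^{-p^*n_B}\le e^{-p^*\eta}$, $n_A\le e^{\eta(p^*)^2/10}$, and $(p^*)^2/10\le p^*/20$ because $p^*\le\tfrac12$, we get $x\le e^{-\frac{19}{20}\eta p^*}<\tfrac12$ once $\eta>\eta_0(p)$. Each value of $(a,b)$ with $a+b\ge1$ is realized by at most $4$ pairs $(k,\ell)$, while the two remaining "corner" frequencies $(0,n_B)$ and $(n_A,0)$ have $n(k,\ell)=n_A n_B$ and contribute at most $2q^{\,n_A n_B}\le 2e^{-2p^*\eta^2}$; hence $|\Delta(s)|\le 2e^{-2p^*\eta^2}+4\big((1-x)^{-2}-1\big)\le 34\,e^{-\frac{19}{20}\eta p^*}$, which is $\le e^{-\eta(p^*)^2/30}=\eps$ for $\eta>\eta_0(p)$ because $\tfrac{19}{20}p^*-\tfrac{(p^*)^2}{30}\ge\tfrac{p^*}{2}$. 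This completes the proof.

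The step I expect to be the main obstacle is precisely the estimate on $n(k,\ell)$: the naive bound $n(T)\ge\eta$ for $T\notin\{\vec 0,\vec 1\}$ is far too weak here, because there are $2^{n_A+n_B}$ frequencies and $n_A+n_B$ can be as large as $2e^{\eta(p^*)^2/10}$, so one genuinely needs the "volume-type" bound $n(k,\ell)\gtrsim a\,n_B+b\,n_A$ in order to beat the entropy term $\log\!\binom{n_A}{k}\binom{n_B}{\ell}\le a\log n_A+b\log n_B$ frequency-by-frequency and obtain a convergent geometric series. Everything else is bookkeeping, but one should be careful with the degenerate regimes $a=0$ or $b=0$ (i.e.\ $k\in\{0,n_A\}$ or $\ell\in\{0,n_B\}$) in the case analysis for $n(k,\ell)$, and with choosing $\eta_0(p)$ large enough to simultaneously make $x\le\tfrac12$, control $(1+\tfrac12\Delta(s))^{-1}$, and absorb the constant $34$.
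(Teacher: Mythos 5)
Your proof is correct and takes a genuinely different route from the paper. The paper proceeds in two steps: first it handles the unbalanced case $5|B|/p^*\le|A|$ (Lemma~\ref{lem:bip_unbalanced}) by conditioning on $\cP_A=(\parity(a,B))_{a\in A}$, showing via a union bound that consecutive $b_i,b_{i+1}$ are very likely to have distinct neighborhoods in a fixed half of $A$, and then using an explicit \emph{switching bijection} (flipping one edge of a discrepancy vertex) to show that, conditioned on that event, $\cP_B$ is \emph{exactly} fix-parity uniform given $\cP_A$; the general balanced case is then reduced to the unbalanced one by partitioning $B$ into $\lfloor 6/p^*\rfloor$ blocks. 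You instead compute the exact Fourier spectrum of the parity vector $P$: the observation that $\widehat P(T)=(1-2p)^{n(T)}$ with $n(T)=k(n_B-\ell)+(n_A-k)\ell$, vanishing only at $T\in\{\vec 0,\vec 1\}$, makes the fix-parity structure transparent, and the entire estimate reduces to bounding the tail of the Fourier mass. The crux — which you correctly identified and correctly proved — is the lower bound $n(k,\ell)\ge\frac12(an_B+bn_A)$ with $a=\min(k,n_A-k)$, $b=\min(\ell,n_B-\ell)$; this "volume-type" bound dominates the $\binom{n_A}{k}\binom{n_B}{\ell}\le n_A^a n_B^b$ entropy term frequency-by-frequency and yields a convergent geometric series, which the crude bound $n(T)\ge\eta$ could not do. Your approach avoids the unbalanced/balanced dichotomy entirely, gives an exact formula for $\PP[P=s]$ rather than a conditional identity plus an error event, and is arguably cleaner; the paper's switching argument, on the other hand, is more combinatorially explicit and fits the $\eps$-close/$\eps$-affected machinery built up in Section~\ref{sec:prelim}, which is re-used elsewhere. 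Both yield a bound of the form $e^{-c(p)\eta}$, stronger than the $e^{-\eta(p^*)^2/30}$ stated, so the constants are not tight in either proof.
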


To prove \cref{lem:bipartite}, first note that $\sum_{a\in A}\parity(a,B)+\sum_{b\in B}\parity(b,A)$ is always even, so it suffices to check that with $c:=(p^*)^2/30$, we have
\[\PP\left[((\parity(a, B))_{a\in A}, (\parity(b,A))_{b\in B}) =s\right]\in\left[(1-e^{-c\eta})\cdot \frac{1}{2^{\abs{A}+\abs{B}-1}},(1-e^{-c\eta})^{-1}\cdot \frac{1}{2^{\abs{A}+\abs{B}-1}}\right]\]
for all $s = ((s_a)_{a\in A}, (s_b)_{b\in B})\in\{0,1\}^{|A|+|B|}$ with $\sum_{a\in A}s_a+\sum_{b\in B}s_b=0\pmod{2}$.

We first prove the case where the sizes of $A,B$ are unbalanced.

\begin{lemma}\label{lem:bip_unbalanced}
     Assume the setup of \cref{lem:bipartite} except instead of $\eta\leq |A|, |B|\leq e^{\eta(p^*)^2/10}$, we suppose that  $$5|B|/p^*\leq \abs{A}\leq e^{\eta(p^*)^2/10} \quad \text{and }\quad \abs{B}\geq \eta p^*/10.$$ Then $((\parity(a, B))_{a\in A}, (\parity(b,A))_{b\in B}) $ is fix-parity $e^{-\eta(p^*)^2/20}$-uniform.
\end{lemma}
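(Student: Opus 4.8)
The plan is to prove Lemma~\ref{lem:bip_unbalanced} by exposing the edge set in two stages: first reveal the full bipartite graph between $A$ and $B$ to determine all the parities $\parity(b,A)$ for $b\in B$, and then argue that, conditioned on this, the vector $(\parity(a,B))_{a\in A}$ is close to uniform on all of $\{0,1\}^{|A|}$ (with no parity constraint of its own, since the single global parity constraint is already "used up" once we know $|B|\ge 1$ of the $\parity(b,A)$ values). Actually, since the edges incident to different vertices $a\in A$ are disjoint, once we condition on anything that is a function of the graph, the cleaner route is: the parities $\parity(a,B)$ for $a\in A$ are, \emph{unconditionally}, mutually independent, and each is $e^{-2|B|p^*}$-uniform by Lemma~\ref{lem:parity-close-to-uniform} applied to the $|B|$ independent Bernoulli$(p)$ edge-indicators from $a$ to $B$. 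So $(\parity(a,B))_{a\in A}$ is $|A|e^{-2|B|p^*}$-uniform on $\{0,1\}^{|A|}$ by independence (or directly by Lemma~\ref{lemma:AlmostUniform}). The hypothesis $|B|\ge \eta p^*/10$ makes $e^{-2|B|p^*}\le e^{-\eta(p^*)^2/5}$, and since $|A|\le e^{\eta(p^*)^2/10}$, the product $|A|e^{-2|B|p^*}$ is at most $e^{-\eta(p^*)^2/10}\le \tfrac12 e^{-\eta(p^*)^2/20}$, comfortably within the target bound.

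**Next**, I would transfer this to the conditional statement we actually want. Write $\vec{Q}_A=(\parity(a,B))_{a\in A}$ and $\vec{Q}_B=(\parity(b,A))_{b\in B}$. The target fix-parity $\eps$-uniformity of $(\vec{Q}_A,\vec{Q}_B)$ says: for every $s\in\{0,1\}^{|A|+|B|}$ with even coordinate-sum, $\PP[(\vec Q_A,\vec Q_B)=s]$ lies within a $(1-\eps)^{\pm1}$ factor of $2^{-(|A|+|B|-1)}$. Fix such an $s=(s_A,s_B)$. I would bound $\PP[(\vec Q_A,\vec Q_B)=s]$ as $\PP[\vec Q_B=s_B]\cdot \PP[\vec Q_A=s_A\mid \vec Q_B=s_B]$. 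For the first factor, note $\vec Q_B$ is itself fix-parity $e^{-2|A|p^*}$-uniform — but wait, that exponent is too small if $|A|$ is large; instead I should use that $\vec Q_B$ is a function of the edges, and bound $\PP[\vec Q_B=s_B]$ directly via a Fourier/character argument on $\{0,1\}^{A\times B}$: $\PP[\vec Q_B=s_B]=2^{-|B|}\sum_{\chi\in\{0,1\}^B}(-1)^{\langle\chi,s_B\rangle}\prod_{a\in A}(1-2p)^{?}$... Concretely, $\EE\big[(-1)^{\langle\chi,\vec Q_B\rangle}\big]=\prod_{a\in A}\EE\big[(-1)^{\sum_{b:\chi_b=1}e_{ab}}\big]=(1-2p^*)^{|A|\cdot|\supp\chi|}$ up to sign handling, so all nonzero-$\chi$ terms are at most $(1-2p^*)^{|A|}\le e^{-2|A|p^*}$ in absolute value, giving $\PP[\vec Q_B=s_B]\in 2^{-|B|}(1\pm 2^{|B|}e^{-2|A|p^*})$. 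Here the unbalanced hypothesis $|A|\ge 5|B|/p^*$ is exactly what we need: $2^{|B|}e^{-2|A|p^*}\le 2^{|B|}e^{-10|B|}\le e^{-|B|}$, negligible. Then, by the first paragraph, $\PP[\vec Q_A=s_A\mid \vec Q_B=s_B]$ is within a $(1\pm 2|A|e^{-2|B|p^*})$ factor of $2^{-|A|}$ provided we check $\vec Q_A$ is only mildly affected by $\vec Q_B$ — this follows from Lemma~\ref{lem:Y-epsUniform}(2) with the edges from $A$ playing the role of $\vec X$ and the edges between... hmm, here $\vec Q_B$ depends on all edges, so I'd instead argue directly: $(\vec Q_A\mid \vec Q_B=s_B)$ — conditioning on $\vec Q_B$ imposes $|B|$ linear constraints on the $|A|\cdot|B|$ edge variables, and within each "column" (edges from a fixed $b$ to $A$) it fixes one parity, which still leaves $\vec Q_A$ as a sum over $\ge |B|$ free Bernoullis per vertex $a$... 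I need to be a little careful, but Lemma~\ref{lem:evensumepsUniform} or a direct conditional version of Lemma~\ref{lemma:AlmostUniform} handles it. Multiplying the two factors and absorbing both error terms into $e^{-\eta(p^*)^2/20}$ (using $\eta$ large, $|A|\le e^{\eta(p^*)^2/10}$, $|B|\ge \eta p^*/10$) completes the bound; the lower bound is symmetric.

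**The main obstacle** I anticipate is the bookkeeping in the conditional step $\PP[\vec Q_A=s_A\mid \vec Q_B=s_B]$: the events $\{\vec Q_B=s_B\}$ and $\{\vec Q_A=s_A\}$ are not independent (both are linear functions of the same edge variables), and one must verify that conditioning on the $|B|$ parities $\vec Q_B=s_B$ leaves enough entropy in each $\parity(a,B)$ to keep it near-uniform. The clean way to dispatch this is to reuse Observation~\ref{obs:LinearTrans}/Corollary~\ref{cor:transformed}: the $|A|+|B|$ relevant linear forms on the edge-variable space $\FF_2^{A\times B}$, namely $\{\parity(a,B)\}_a\cup\{\parity(b,A)\}_b$, span a subspace of dimension $|A|+|B|-1$ (the single relation being the global parity), and one can exhibit an invertible transformation of a basis of $|A|+|B|-1$ of these forms into an $\eta'$-layered sequence with $\eta'\gtrsim \min(|A|,|B|)\gtrsim \eta p^*/10$ — since each $\parity(b,A)$ involves $|A|$ private edges not appearing in the forms processed after it, and similarly each $\parity(a,B)$ involves $|B|$ edges. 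Applying Corollary~\ref{cor:transformed} then yields $(r)e^{-2\eta' p^*}$-uniformity of the full vector of $|A|+|B|-1$ forms in one shot, where $r=|A|+|B|-1\le 2e^{\eta(p^*)^2/10}$ and $e^{-2\eta'p^*}\le e^{-\eta(p^*)^2/5}$, and the product is $\le e^{-\eta(p^*)^2/20}$ as required. This even avoids the two-stage split above. I would double-check the layering constants and the $\eta_0$ threshold needed so that $re^{-2\eta'p^*}<1/2$, but that is routine.
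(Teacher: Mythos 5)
Your proposal, as ultimately written, hinges on the claim in the last paragraph that the forms $\{\parity(a,B)\}_{a\in A}\cup\{\parity(b,A)\}_{b\in B}$ (minus one) can be transformed by an invertible $T:\FF_2^{|A|+|B|-1}\to\FF_2^{|A|+|B|-1}$ into an $\eta'$-layered sequence with $\eta'\gtrsim\min(|A|,|B|)$, so that \cref{cor:transformed} applies in one shot. This cannot work: it is not merely that the "private" constants are hard to nail down, but that \emph{no} invertible transformation of this family can be layered to level more than $1$. Indeed, the span of these forms is the cut space of the complete bipartite graph on $A\sqcup B$, and if $(A_1,\dots,A_r)$ is any basis of that space, an edge $e$ lying in $A_1$ but in none of $A_2,\dots,A_r$ forces the coordinate functional $\psi_e:v\mapsto v_e$ (restricted to the cut space) to equal the dual basis vector of $A_1$; but on the cut space $\{(x_a+y_b)_{ab}:x\in\FF_2^A,\,y\in\FF_2^B\}$ the functionals $\psi_e$ are pairwise distinct (taking $y=0$ recovers $a$, taking $x=0$ recovers $b$), so at most one such $e$ can exist. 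Hence $|A_1\setminus(A_2\cup\dots\cup A_r)|\le 1$, and Definition~\ref{def:linear-transform} gives you nothing here. This is exactly the feature that distinguishes the present lemma from \cref{lem:UWlayered}, where the stars in $\mathcal Q$ genuinely do have large private pieces.

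Your earlier two-stage Fourier route is not wrong in spirit — the estimate $\PP[\vec Q_B=s_B]=2^{-|B|}(1\pm O(e^{-|B|}))$ using $|A|\ge 5|B|/p^*$ is fine — but the conditional step is the real content and you do not complete it. The appeal to \cref{lem:evensumepsUniform} does not directly apply (there is no independent block $\vec X'$ on which the conditioned parities $\vec Q_B$ restrict to a fix-parity uniform vector: every edge appears in exactly one of the $B$-stars, so the natural restriction of $\vec Q_B$ to any proper edge subset has deterministic zero coordinates and is not uniform). Carrying the Fourier computation through the conditioning \emph{is} possible, but it requires a delicate split over the weight $|\sigma|$ of the characters: for $\sigma$ near $\mathbf 1/2$ the factor $(q^{|\sigma|}+q^{|A|-|\sigma|})^{|B|}$ picks up a $2^{|B|}$ that must be beaten by $q^{\Theta(|A||B|)}$, and this needs $\eta$ large relative to $1/(p^*)^2$ rather than falling out for free. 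Since you flag this step as "I'd double-check... but that is routine" and then substitute the broken layering argument, the proof as stated has a genuine gap.

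For comparison, the paper avoids all of this by a combinatorial switching argument: it first reveals $\cP_A=(\parity(a,B))_{a\in A}$ (which is near-uniform because the stars $a\times B$ are disjoint), then introduces the events $\cE_i$ that consecutive $b_i,b_{i+1}$ have distinct neighborhoods in one half of $A$, shows these hold with overwhelming probability conditional on $\cP_A$, and finally exhibits an explicit involution on bipartite graphs (swap one discriminating edge pair) proving that conditional on $\cP_A$ and all $\cE_i$, the vector $(\parity(b,A))_{b\in B}$ is \emph{exactly} fix-parity uniform. That bijection replaces the Fourier/character estimate entirely, and the $|A|\gg|B|$ hypothesis enters only to make $\PP[\overline{\cE}_i]\le (1-p^*)^{|A|/2}$ negligible. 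If you want to salvage your approach, you must either complete the conditional Fourier estimate with the weight split described above, or import some version of the paper's switching argument.
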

\begin{proof}
    Suppose $\eta$ is sufficiently large so that $m:=|B|\geq \eta p^*/10\geq  2$. In the following, we will also assume $\eta$ is large enough so that some inequalities hold. Observe that, since the sets $a\times B$ for all $a\in A$ are pairwise disjoint, by \cref{lemma:AlmostUniform}, $(\parity(a, B))_{a\in A}$ is $|A|e^{-2m p^*}$-uniform. Let $\cP_A := (\parity(a, B))_ {a\in A}$ and $\cP_B := (\parity(b, A))_ {b\in B}$.
    
    Fix any ordering $b_1,\dots, b_{m}$ of the vertices in $B$.
    For convenience, let $P_i :=  \parity(b_i, A)$ for every $i\in[m]$ and let $N_S(v)$ denote the set of neighbors of $v$ in $S$. 
    Consider a balanced partition $A = A_0\sqcup A_1$. For every $ i\in [m-1]$, let $\cE_i$ denote the event that $N_{A_{j}}(b_{i}) \ne N_{A_{j}}(b_{i+1})$ where $j=i\pmod{2}$; let $\overline{\cE}_i$ denote its complement event $N_{A_{j}}(b_{i}) = N_{A_{j}}(b_{i+1})$ (that is, $b_i$ and $b_{i+1}$ have identical neighborhoods in $A_j$). We first prove that it is unlikely for any $\overline{\cE}_i$ to occur.

    \begin{claim}\label{claim:bipartite-pr-oneequal}
   For every $s_A\in\{0,1\}^{|A|}$, we have
        \[\PP[\exists i\in [m-1] \text{ such that } \overline{\cE}_i \mid \cP_A=s_A] \le 2m e^{-\abs{A} p^*/3}.\]
    \end{claim}
    
\begin{proof*}
We first show that for every $i\in [m-1]$, whether $\cE_i$ is true is $2|A|e^{-2(m-2) p^*}$-affected by $\cP_A$.
    Again, since the sets $a\times (B\setminus \{b_{i}, b_{i+1}\})$ for all $a\in A$ are pairwise disjoint, we know from \cref{lemma:AlmostUniform} that $(\parity(a, B\setminus \{b_{i}, b_{i+1}\})_{a\in A}$ is $|A|e^{-2(m-2) p^*}$-uniform.
    Thus we can apply \cref{lem:Y-epsUniform} where
    \begin{enumerate}
        \item $\vec X$ is the subgraph of $G$ on $A\times \{b_{i},b_{i+1}\}$,
        \item $\vec X'$ is the subgraph of $G$ on $A\times (B\setminus\{b_{i},b_{i+1}\})$,
        \item the sets $A_1,\dots, A_r$ are $a\times B$ for all $a\in A$, such that $\vec Y=(\parity(a, B))_{a\in A}=\mathcal P_A$ and $\vec Z=(\parity(a, B\setminus \{b_{i}, b_{i+1}\}))_{a\in A}$.
    \end{enumerate}
    By \cref{lem:Y-epsUniform}, the subgraph of $G$ on $A\times \{b_{i},b_{i+1}\}$ is $2|A|e^{-2(m-2) p^*}$-affected by $\mathcal P_A$. Since whether $\cE_i$ holds is completely determined by this subgraph, it is $2|A|e^{-2(m-2) p^*}$-affected by $\mathcal P_A$ as well.
Thus for all $i\in [m-1]$ and $j=i\pmod{2}$, we have
    \begin{align*}
        \PP[\overline{\cE}_i \mid \cP_A=s_A] &\le \lpr{1-2|A|e^{-2(m-2) p^*}}^{-1}\PP[\overline{\cE}_i].
    \end{align*}
    Since
    \begin{align*}
        \PP[\overline{\cE}_i]=\PP[N_{A_j}(b_i)=N_{A_j}(b_{i+1})]&\leq \max_{0\le x\le |A_{j}|}p^x(1-p)^{|A_{j}| - x}=(1-p^*)^{|A_j|}\leq  e^{-\abs{A} p^*/3},
    \end{align*}
   we get that
   \begin{align*}
       \PP[\overline{\cE}_i \mid \cP_A=s_A] &\le \lpr{1-2|A|e^{-2(m-2) p^*}}^{-1}e^{-\abs{A} p^*/3}\leq 2e^{-\abs{A} p^*/3}.
   \end{align*}
   Here we used the fact that $\abs{A}\leq  e^{\eta(p^*)^2/10}\leq e^{2(m-2)p^*}/4$, as $\eta$ is large enough. A union bound over all $i\in [m-1]$ finishes the proof.
\end{proof*}

    Next, we will show that, conditioning on $\cP_A$ and $\cE_1,\dots,\cE_{m-1}$, the parities $(P_i)_{i\in|B|}$ are fix-parity uniform.
    \begin{claim}\label{claim:bipartite-pair}
        For any $s_A:=(s_a)_{a\in A}\in\{0,1\}^{|A|}$ and  $s_1,\dots, s_m\in \{0,1\}$ satisfying $\sum_{i=1}^ms_i=\sum_{a\in A}s_a$, we have
        \[\PP[P_i = s_i \,\, \forall i\in [m] \mid \cE_1,\dots, \cE_{m-1}, \cP_A=s_A] = \frac{1}{2^{m-1}}.\]
    \end{claim}

    \begin{proof*}
    It suffices to show that for each $i\in [m-1]$, there is a bijection $f_i$ between bipartite graphs on $A\sqcup B$ satisfying $\cE_1,\dots,\cE_{m-1}$ such that the following three conditions hold:
        \begin{enumerate}
            \item $f_i$ preserves the total number of edges;
            \item $f_i$ preserves $\cP_A=(\parity(a,B))_{a\in A}$ and $(\parity(b,A))_{b\in B\setminus\{ b_i,b_{i+1}\}}$;
            \item $f_i$ flips both $\parity(b_i,A)$ and $\parity(b_{i+1},A)$.
        \end{enumerate}

        We define the bijection $f_i$ as follows. Let $j=i\pmod{2}$ and fix an ordering $A_{j}=\{a_1,\dots,a_{\abs{A_j}}\}$. Assume $H$ is a bipartite graph on $A\sqcup B$ satisfying $\cE_1,\dots,\cE_{m-1}$. 
        Since $H$ satisfies $\cE_i$, we can pick a vertex $a_k$ such that exactly one of $b_ia_k$ and $b_{i+1}a_k$ is an edge in $H$. We pick $k_H$ to be the smallest such $k$.
        Define $f_i(H)$ to be the graph obtained from $H$ by switching $b_ia_{k_H}$ and $b_{i+1}a_{k_H}$. It is easy to verify that $f_i$ satisfies the above three conditions. In particular, $f_i$ is a bijection since $f_i(f_i(H))=H$ for all $H$, which follows from the fact that $k_H=k_{f_i(H)}$.
    \end{proof*}
    
    Now we proceed to proving \cref{lem:bip_unbalanced}. For any $s=((s_a)_{a\in A}, (s_b)_{b\in B})\in \{0,1\}^{|A|+|B|}$ with $\sum_{a\in A}s_a+\sum_{b\in B}s_b\equiv 0\pmod{2}$, letting $s_A=(s_a)_{a\in A}$ and $s_B=(s_b)_{b\in B}$, we have
    \begin{align*}
            \PP[\cP_A=s_A, \,\cP_B=s_B ]=\PP[\cP_B=s_B  \mid \cP_A=s_A]\cdot\PP[\cP_A=s_A].
        \end{align*}
        Since $\cP_A$ is $\abs{A}e^{-2m p^*}$-uniform, we have
        \[\PP[\cP_A=s_A]\in \lbr{(1-\abs{A}e^{-2m p^*})\frac{1}{2^{\abs{A}}},(1-\abs{A}e^{-2m p^*})^{-1}\frac{1}{2^{\abs{A}}}}.\]
        Also by \cref{claim:bipartite-pr-oneequal} and \cref{claim:bipartite-pair}, the probability $\PP[\cP_B=s_B \mid \cP_A=s_A]$ is at least 
        \begin{align*}
            &\PP[\cP_B=s_B, \cE_1,\dots,\cE_{m-1} \mid\cP_A=s_A]\\
            &= \PP[\cP_B=s_B\mid  \cE_1,\dots,\cE_{m-1}, \cP_A=s_A]\cdot\PP[ \cE_1,\dots,\cE_{m-1} \mid\cP_A=s_A]\\
            &=\frac{1}{2^{m-1}}\PP[ \cE_1,\dots,\cE_{m-1} \mid\cP_A=s_A]
            \geq (1-2me^{-\abs{A} p^*/3})\frac{1}{2^{m-1}}
        \end{align*}
        and is at most 
        \begin{align*}
            &\PP[\cP_B=s_B, \cE_1,\dots,\cE_{m-1} \mid\cP_A=s_A]+\PP[\exists i\in [m-1] \text{ such that } \overline{\cE}_i \mid \cP_A=s_A]\\
            &= \frac{1}{2^{m-1}}\PP[ \cE_1,\dots,\cE_{m-1} \mid\cP_A=s_A]+(1-\PP[ \cE_1,\dots,\cE_{m-1} \mid\cP_A=s_A])\\
            &\leq (1-2me^{-\abs{A} p^*/3})\frac{1}{2^{m-1}}+ 2me^{-\abs{A} p^*/3}.
        \end{align*}
        Since $m=\abs{B}\leq \abs{A}p^*/5$, we know that when $m$ is large enough, we have
        \[(1-2me^{-\abs{A} p^*/3})\frac{1}{2^{m-1}}\geq (1-e^{-\abs{A} p^*/6})\frac{1}{2^{m-1}}\]
        and
        \begin{align*}
            (1-2me^{-\abs{A} p^*/3})\frac{1}{2^{m-1}}+2me^{-\abs{A} p^*/3}
            &=(1-2me^{-\abs{A} p^*/3}+2me^{-\abs{A} p^*/3+(m-1)\ln 2})\frac{1}{2^{m-1}}\\
            &\leq (1-e^{-\abs{A} p^*/6})^{-1}\frac{1}{2^{m-1}}.
        \end{align*}
        Thus, we showed that $\PP[\cP_A=s_A, \,\cP_B=s_B ] $ is in 
        \[\lbr{(1-e^{-\abs{A} p^*/6})(1-\abs{A}e^{-2mp^*})\frac{1}{2^{\abs{A}+m-1}}, (1-e^{-\abs{A} p^*/6})^{-1}(1-\abs{A}e^{-2mp^*})^{-1}\frac{1}{2^{\abs{A}+m-1}}}.\] 
        Since $(1-e^{-\abs{A} p^*/6})(1-\abs{A}e^{-2mp^*})\geq 1-e^{-\eta(p^*)^2/20}$ when $\eta$ is large enough, we conclude that $(\parity(a, B), \parity(b,A))_{a\in A, b\in B}$ is fix-parity $e^{-\eta(p^*)^2/20}$-uniform.
\end{proof}

\begin{proof}[Proof of \cref{lem:bipartite}]
Now, we may prove the general case for \cref{lem:bipartite}. We may assume without loss of generality that $\eta\leq\abs{B}\leq\abs{A}\leq e^{\eta(p^*)^2/10}$. Partition $B$ into $k=\lfloor6/p^*\rfloor$ balanced sets $B_1,\dots,B_k$. Then we may apply \cref{lem:bip_unbalanced} to the bipartite graph between $A$ and $B_i$ for each $i\in [k]$ and conclude that $((\parity(a, B_i)_{a\in A}, (\parity(b,A))_{b\in B_i})$ is fix-parity $e^{-\eta(p^*)^2/20}$-uniform. Since the subgraphs of $G$ restricted on $A\times B_1,\dots,A\times B_k$ are independent, we know that for all $s^{(1)}_A,\dots,s^{(k)}_{A}\in\{0,1\}^{|A|}$ and $s_{B_1}\in\{0,1\}^{|B_1|} ,\dots,s_{B_k}\in\{0,1\}^{|B_k|}$ such that every $s^{(i)}_A$ and $s_{B_i}$ have the same parity sum, we have 
\begin{align*}
    &\PP[(\parity(a, B_i)_{a\in A}=s^{(i)}_A,\, (\parity(b,A))_{b\in B_i}=s_{B_i} \text{ for all }i\in[k]]\\
    &=\prod_{i=1}^k \PP[(\parity(a, B_i)_{a\in A}=s^{(i)}_A,\, (\parity(b,A))_{b\in B_i}=s_{B_i}]\\
    &\in \left[(1-e^{-\eta(p^*)^2/20})^k\frac{1}{2^{k|A|+\sum_{i=1}^k |B_i|-k}}, (1-e^{-\eta(p^*)^2/20})^{-k}\frac{1}{2^{k|A|+\sum_{i=1}^k |B_i|-k}}\right].
\end{align*}
Observe that for all $s_A\in\{0,1\}^{|A|}$ and $s_B=(s_{B_i})_{i\in[k]}\in\{0,1\}^{|B|}$ with the same parity sum, there are $2^{(|A|-1)(k-1)}$ ways to choose $s_A^{(1)},\dots,s_A^{(k)}\in\{0,1\}^{|A|}$ such that $s_A=\sum_{i=1}^k s_A^{(i)}$, and every $s_A^{(i)}$ and $s_{B_i}$ have the same parity sum. Indeed, one can pick $s_A^{(1)},\dots,s_A^{(k-1)}$ arbitrarily as long as the parity condition holds, and these uniquely fix $s_A^{(k)}=s_A-\sum_{i=1}^{k-1} s_A^{(i)}$.

Thus, we get that for every $s_A$ and $s_B$, we have
\begin{align*}
    &\PP[(\parity(a, B)_{a\in A}=s_A,\, (\parity(b,A))_{b\in B}=s_{B}]\\
    &=\sum_{(s_A^{(i)})_{i\in[k]}}\PP[(\parity(a, B_i)_{a\in A}=s^{(i)}_A,\, (\parity(b,A))_{b\in B_i}=s_{B_i} \text{ for all }i\in[k]]\\
    &\in
    \left[2^{(|A|-1)(k-1)}(1-e^{-\eta(p^*)^2/20})^k\frac{1}{2^{k|A|+\sum_{i=1}^k |B_i|-k}}, 2^{(|A|-1)(k-1)}(1-e^{-\eta(p^*)^2/20})^{-k}\frac{1}{2^{k|A|+\sum_{i=1}^k |B_i|-k}}\right]\\
    &=\left[(1-e^{-\eta(p^*)^2/20})^k\frac{1}{2^{|A|+|B|-1}}, (1-e^{-\eta(p^*)^2/20})^{-k}\frac{1}{2^{|A|+|B|-1}}\right].
\end{align*}
Thus, we get that $((\parity(a, B))_{a\in A}, (\parity(b,A))_{b\in B})$ is fix-parity $e^{-\eta(p^*)^2/30}$-uniform, as
\[(1-e^{-\eta(p^*)^2/20})^k\geq 1-ke^{-\eta(p^*)^2/20}\geq 1-e^{-\eta(p^*)^2/30},\] 
when $\eta$ is large enough.
\end{proof}

\section{$(U,W)$-removal}\label{sec:uw-removal}

The following \cref{sec:uw-removal} and \cref{sec:algorithm} will be devoted to proving \cref{thm:recurrence}. To prove \cref{thm:recurrence}, we show that given any $\alpha$-partially revealed $p$-graph $G$, with high probability we can sequentially remove $m:=3n/4\pm o(n)$ vertices $v_1,\dots,v_{m}$ from the graph $G$ so that an even number of edges is removed every time a vertex is removed.
Furthermore, the procedure used to choose these vertices $v_1,\dots,v_{m}$ does not reveal too much information about $G$. This means that the remaining graph after the removal is close to an $\alpha$-partially revealed $p$-graph on which we can apply induction.

As mentioned in the \cref{sec:proof-strategy}, the above process is what we will call the $(U,W)$-removal. We remark that to get the $f_\alpha(n)\leq e^{-\Theta(n^{1/2-\alpha})}$ upper bound, we do need a quadratic term $\max_{n'\in [n/4-o(n),n/4+o(n)]}f_\alpha(n')^2$ on the right hand side of \cref{thm:recurrence}. This in practice means that during the induction, we need to perform $(U,W)$-removal on $G$ twice so that the probability that neither of them succeeds is very low.

\subsection{Notations for random graphs}
We first introduce the notations we will use in \cref{sec:algorithm,sec:uw-removal}.
From now on, we fix $0 < \alpha < 1/2$ and $p\in (0,1)$, so for convenience we drop $\alpha$ from our notations (e.g., we say $G$ is a partially revealed $p$-graph to mean that $G$ is an $\alpha$-partially revealed $p$-graph). We also use $f(n)$ to denote $f_\alpha(n)$ as the context is clear from now on.

Let $G = (V,E)$ be a random graph on $n$ vertices. We will use the following notations. We call the 2-sets in $\binom{V}{2}$ \emph{potential edges} to indicate that $\{u,v\}$ may or may not be an edge in $G$. Therefore, $\binom{A}{2}$ is the collection of all potential edges within $A\subseteq V$. Also, we use $S(A,B)=\{\{a,b\}\mid a\in A,b\in B,a\neq b\}$ to denote all the potential edges between $A,B\subseteq V$. If $A=\{v\}$, we write $S(v,B)$ instead of $S(\{v\},B)$ to denote the star of potential edges centered at $v$. If $B=V(H)$ for a graph $H$, we write $S(A,H)$ instead of $S(A,V(H))$. 

For a set of potential edges $S\subseteq {\binom{V}{2}}$, we use $\parity_G(S)\in\{0,1\}$ to denote the parity of the number of edges in $S$, i.e., the parity of $\abs{E(G)\cap S}$. When $S=S(A,B)$, we write $\parity_G(A,B)$ instead of $\parity_G(S(A,B))$. Similarly, when $S=\binom{A}{2}$, we write $\parity_G(A)$ instead of $\parity_G\lpr{\binom{A}{2}}$. For any $S\in \binom{V}{2}$, when we say that we reveal the information of $G[S]$, it means that we condition on any feasible assignment of edges and non-edges to all the potential edges of $S$. Similar as before, when $S=\binom{A}{2}$, we write $G[A]$ instead of $G[S]$; when  $S=S(A,B)$, we write $G[A,B]$  instead of $G[S]$. Note that in this definition, the notation $G[A]$ agrees with the usual definition where $G[A]$ is the induced subgraph on $A$. Also, when we say that we reveal the information of $\parity_G(S)$, it means that we condition on any feasible assignment of $\parity_G(S)$.

Let $\mathcal{I}\subseteq 2^{\binom{V}{2}}$ be a family of sets of potential edges, and $\Sigma\subseteq\binom{V}{2}$. We let $G[\mathcal{I}]=\{G[S]\mid S\in\mathcal{I}\}$, $\parity_G(\mathcal{I})=\{\parity_G(S)\mid S\in \mathcal{I}\}$, and $\mathcal{I}|_\Sigma=\{S\cap\Sigma\mid S\in\mathcal{I}\}$. When we say that we reveal $G[\mathcal{I}]$, it means that we reveal $G[S]$ for all $S\in\mathcal{I}$. Similarly, when we say that we reveal $\parity_G(\mathcal{I})$, it means that we reveal $\parity_G(S)$ for all $S\in\mathcal{I}$. When $G$ is clear from  context, we will write $\parity$ instead of $\parity_G$.

\subsection{$(U,W)$-removal}\label{subsec:lr-removal}

We define the process of $(U,W)$-removal as follows.

\begin{definition}[$(U,W)$-removal]\label{def:lrremoval}\quad

    \textbf{Input}: 
     a partially revealed graph $G=(V,E)$ with revealed part $A=\{a_1.\dots,a_{\abs{A}}\}$, and a partition $V\setminus A = U\sqcup W$, where $U$ is equipped with an ordering $U=\{u_1,\dots,u_{|U|}\}$, and $W$ is equipped with a partition $W=W_{\#}\sqcup W_1\sqcup \dots \sqcup W_s$

    \textbf{Output}: If the process succeeds, then it outputs a subset $V_W\subseteq W$ and an ordered sequence $R=(v_1,\dots, v_{|V\setminus V_W|})$ of $V\setminus V_W$ such that for every $1\leq i\leq |V\setminus V_W|$, $\parity(v_i, V\setminus \{v_1,\dots, v_{i-1}\}) = 0$. Otherwise the process fails and outputs nothing.

    \textbf{Maintained sets/pointers:}
        We use pointers $\ptrU,\ptrW$ to keep track of the number of vertices removed from $A\cup U$ and $W$ respectively. We let $R^{(\ptrU)}$ denote the set of vertices removed in the first $\ptrU$ rounds (i.e., after $\ptrU$ vertices are removed from $A\cup U$). We use $\mathcal I_A^{(\ptrU)}$, $\mathcal I_U^{(\ptrU)}$, $\mathcal I_W^{(\ptrU)}$ to denote the collections of potential edge sets whose parities are revealed and $\mathcal I_e^{(\ptrU)}$ to denote the potential edge sets revealed in the first $\ptrU$ rounds.
        
    \textbf{Procedure}: 
    
    Concatenate $A$ and $U$ to form a sequence $(\overline u_1,\dots,\overline u_{\abs{A}+\abs{U}}):=(a_1,\dots,a_{\abs{A}},u_1,\dots,u_{\abs{U}})$.

    Initialize $\ptrU=1$, $\ptrW= 1$, $R^{(0)}=\varnothing$, and $\mathcal I_A^{(0)},\mathcal I_U^{(0)},\mathcal I_W^{(0)},\mathcal I_e^{(0)}=\varnothing$.

    For $\ptrU=1,\dots,\abs{A}+\abs{U}$:
    \begin{enumerate}[1.]
        \item Reveal $\parity (\overline u_\ptrU,V\setminus R^{(\ptrU-1)})$. If $\overline u_\ptrU\in A$, set $\mathcal I_A^{(\ptrU)}:=\mathcal I_A^{(\ptrU-1)}\cup\{S(\overline u_\ptrU, V\setminus R^{(\ptrU-1)})\}$ and $\mathcal I_U^{(\ptrU)}:=\mathcal I_U^{(\ptrU-1)}$. If $\overline u_\ptrU\in U$, set $\mathcal I_U^{(\ptrU)}:=\mathcal I_U^{(\ptrU-1)}\cup\{S(\overline u_\ptrU, V\setminus R^{(\ptrU-1)})\}$ and $\mathcal I_A^{(\ptrU)}:=\mathcal I_A^{(\ptrU-1)}$.
        \item Given the value of $\parity (\overline u_\ptrU,V\setminus R^{(\ptrU-1)})$ revealed in step (1):
        \begin{itemize}
            \item If $\parity (\overline u_\ptrU,V\setminus R^{(\ptrU-1)})=0$:  set $R^{(\ptrU)}:=R^{(\ptrU-1)}\cup\{\overline u_\ptrU\}$ and set $v_{|R^{(\ptrU-1)}|+1}:=\overline u_\ptrU$. Set $\cI^{(\ptrU)}_e:=\cI^{(\ptrU-1)}_e$ and $\cI^{(\ptrU)}_W:=\cI^{(\ptrU-1)}_W$.
            \item If $\parity (\overline u_\ptrU,V\setminus R^{(\ptrU-1)})=1$: let $j=\ptrW\pmod s$ (recall that we use the convention \linebreak$s\pmod s=s$), reveal $G[\overline u_\ptrU, W_j\setminus R^{(\ptrU-1)}]$ and set $\mathcal I^{(\ptrU)}_e:=\mathcal I^{(\ptrU-1)}_e\cup\{S(\overline u_\ptrU, W_j\setminus R^{(\ptrU-1)})\}$. 
            Reveal $\parity(w,V\setminus R^{(\ptrU-1)})$ for all $w\in W_j\setminus R^{(\ptrU-1)}$ and set $\mathcal I^{(\ptrU)}_W:=\mathcal I^{(\ptrU-1)}_W\cup\{S(w,V\setminus R^{(\ptrU-1)})\mid w\in W_j\setminus R^{(\ptrU-1)}\}$.
            \begin{itemize}
                \item If there exists at least one vertex $w\in W_j\setminus R^{(\ptrU-1)}$ such that $\overline u_\ptrU w\in E$ and $\parity(w,V\setminus R^{(\ptrU-1)})=0$, then we pick one such $w$ and set $R^{(\ptrU)}:=R^{(\ptrU-1)}\cup\{w,\overline u_\ptrU\}$. More specifically, we may pick $w$ deterministically by prescribing an ordering of the $V$ and picking $w$ to be the smallest one in the ordering satisfying the conditions. Furthermore, we label $v_{|R^{(\ptrU-1)}|+1}:=w$, $v_{|R^{(\ptrU-1)}|+2}:=\overline u_\ptrU$, and increment $\ptrW$ by $1$.
                \item  If there is no such $w$, we terminate the process and say the process \emph{fails}.
            \end{itemize}
        \end{itemize}
    \end{enumerate}
    
    If the process does not fail, then we say the process \emph{succeeds} and output  $R:=R^{(\abs{A}+\abs{U})}$ and $V_W:=V\setminus R$. 
\end{definition}

    \begin{remark}
        In the above process,  we use the pointers $\ptrU, \ptrW$ to keep track of the number of vertices removed from $A\cup U$ and $W$ respectively. For every $\ptrU=1,\dots,\abs{A}+\abs{U}$, we aim to remove the vertex $\overline u_\ptrU$ at round $\ptrU$, and we use $R^{(\ptrU-1)}$ to denote the set of already removed vertices after round $\ptrU-1$. If $\parity (\overline u_\ptrU,V\setminus R^{(\ptrU-1)})=0$, then we can directly remove $\overline u_\ptrU$ and be done. If $\parity (\overline u_\ptrU,V\setminus R^{(\ptrU-1)})=1$, then we try to find some $w\in W_j$ where $j = \ptrW \pmod{s}$ so that we can first remove $w$ and then remove $\overline u_\ptrU$. Note that we require ourselves to loop through the blocks $W_1,\dots,W_s$ when we remove vertices from $W$. In other words, we want the $\ptrW$-th vertex removed from $W$ to lie in the block $W_{\ptrW\pmod s}$.
    \end{remark}

\subsection{The success probability of $(U,W)$-removal}\label{subsec:LRsuccess}

We will use the remainder of this section to prove the following proposition, which says that if the inputs are chosen appropriately, then with high probability the $(U,W)$-removal succeeds with the number of remaining vertices after the removal is concentrated around its expectation.

\begin{proposition}\label{prop:process-success-prob}
    Let $G=(V,E)$ be a partially revealed $p$-graph on $n$ vertices with $A\subseteq V$ being the revealed part.
    Let $U\sqcup W$ be a partition of $V\setminus A$ with $|U|, |W|=\frac{n}{2}\pm o(n)$. Suppose $U=\{u_1,\dots,u_m\}$ is an ordering of $U$, and $W=W_{\#}\sqcup W_1\sqcup \dots \sqcup W_s$ is a partition of $W$ such that $W_{\#},W_1,\dots,W_s$ are balanced with $s=\Theta(n^{1/2+\alpha})$. Then with probability $1-e^{-\Theta(n^{1/2-\alpha})}$, the $(U,W)$-removal succeeds with the output $V_W$ satisfying $|V_W| \in \lbr{\frac{n}{4}-o(n), \frac{n}{4}+o(n)}$.
\end{proposition}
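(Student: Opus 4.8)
The plan is to track the $(U,W)$-removal round by round and show that at every round the relevant parity queries are $\eps$-uniform for a tiny $\eps$, so each round fails with probability at most $\tfrac12+o(1)$ of ``needing a helper $w$'' and, conditionally, with probability $e^{-\Theta(n^{1/2-\alpha})}$ of ``failing to find $w$''. The key bookkeeping device is that all information revealed during the procedure consists of the parities $\parity(\overline u_\ptrU, V\setminus R^{(\ptrU-1)})$, the parities $\parity(w, V\setminus R^{(\ptrU-1)})$ for $w$ in the queried block $W_j$, and the genuine edge-exposures $G[\overline u_\ptrU, W_j\setminus R^{(\ptrU-1)}]$ (plus the initial data of the partially revealed graph, which touches only $\le n^{1-2\alpha}$ vertices in $A$). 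I would organize these into $\eta$-layered (or $\eta$-transformed) set sequences with $\eta=\Theta(n^{1/2-\alpha})$: the crucial point is that between two consecutive visits to a block $W_j$ we remove at least $s=\Theta(n^{1/2+\alpha})$ vertices, and each such removal deletes a fresh potential edge incident to each surviving vertex of $W_j$, so the star $S(w, V\setminus R)$ at a later visit differs from the one at the previous visit in $\ge s$ coordinates that have not been revealed. Hence the sequence of parity-stars queried at $W_j$ across all its visits is $\eta$-layered with $\eta \asymp s \asymp n^{1/2+\alpha} \gg n^{1/2-\alpha}$ — actually I expect to only need $\eta$ on the order of the number of unrevealed potential edges at a surviving $W$-vertex, which stays $\Theta(n)$ for the duration, giving an even better bound. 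Using \cref{lemma:AlmostUniform}, \cref{cor:transformed}, \cref{lem:Y-epsUniform} and \cref{lem:evensumepsUniform}, conclude that at each round: (i) $\parity(\overline u_\ptrU, V\setminus R^{(\ptrU-1)})$ is $\eps$-uniform conditioned on everything revealed so far, hence equals $0$ with probability $\tfrac12\pm\eps$; and (ii) when it equals $1$, the joint distribution of $(G[\overline u_\ptrU, W_j\setminus R^{(\ptrU-1)}],\ (\parity(w,V\setminus R^{(\ptrU-1)}))_{w\in W_j\setminus R^{(\ptrU-1)}})$ is, up to an $\eps$-factor and up to a single global parity constraint, that of an independent $p$-random star together with independent fair coins; this is exactly the situation of \cref{lem:bipartite} / \cref{lem:evensumepsUniform}.

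With that in hand, the argument runs as follows. First, set $\eta = \Theta(n^{1/2-\alpha})$ and $\eps = e^{-\Theta(n^{1/2-\alpha})}$, and note $|W_j| = \Theta(n^{1/2-\alpha})$ so each block is large enough for \cref{lem:bipartite} to apply. Second, by a union bound over the $\le n$ rounds, every parity query we make during the procedure is $n\eps$-uniform given the prior transcript; in particular $n\eps = o(1)$, so we may pretend all queried parities are exactly fair coins at a cost of $o(1)$ in total. Third, condition on a ``good'' event that at every round where $\parity(\overline u_\ptrU,\cdot)=1$, the queried block $W_j$ still has $\ge \tfrac12|W_j|$ surviving vertices; I will verify this deterministically from the fact that each block is visited at most $\lceil(|A|+|U|)/s\rceil = O(n^{1/2-\alpha})$ times while $|W_j|=\Theta(n^{1/2-\alpha})$ — so with the constants chosen appropriately the blocks never get more than half-depleted, and this step needs no probability at all (or only a crude concentration bound on how many rounds need a helper). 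Fourth, fix a round with $\parity(\overline u_\ptrU,\cdot)=1$ and the surviving part $W_j' := W_j\setminus R^{(\ptrU-1)}$ of size $\Theta(n^{1/2-\alpha})$: among the roughly $p|W_j'|$ edges from $\overline u_\ptrU$ into $W_j'$, each endpoint $w$ has $\parity(w, V\setminus R^{(\ptrU-1)})=0$ with probability $\tfrac12\pm\eps$ independently enough (here I invoke \cref{lem:bipartite} on the bipartite graph between $W_j'$ and a large unrevealed vertex set, so the $W$-side parities are fix-parity $\eps$-uniform, hence jointly $\eps$-uniform after dropping one coordinate), so the probability that \emph{no} such $w$ exists is $(\tfrac12+o(1))^{\Theta(n^{1/2-\alpha})} + \eps= e^{-\Theta(n^{1/2-\alpha})}$. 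Fifth, union-bound this failure probability over the $\le n$ rounds that could require a helper, giving total failure probability $n\cdot e^{-\Theta(n^{1/2-\alpha})} = e^{-\Theta(n^{1/2-\alpha})}$, which dominates the earlier $o(1)$ and $n\eps$ losses after adjusting constants in the exponent. This proves the success claim.

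For the size of $V_W$: each successful round removes exactly one vertex $\overline u_\ptrU$ from $A\cup U$, and additionally removes one vertex from $W$ precisely when $\parity(\overline u_\ptrU,\cdot)=1$, an event of probability $\tfrac12\pm o(1)$ that (by the $\eta$-layered structure) is $o(1)$-close to independent across rounds. So the number of $W$-vertices removed is $(|A|+|U|)/2 \pm o(n) = n/4\pm o(n)$ by a standard Azuma/Chernoff argument on this $o(1)$-almost-independent sequence (or by bounding the variance directly), giving $|V_W| = |W| - (n/4 \pm o(n)) = n/2 - n/4 \pm o(n) = n/4\pm o(n)$. The concentration step costs another $e^{-\Theta(n)}$ which is absorbed. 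Finally I record, for use in \cref{sec:algorithm}, that the entire transcript revealed is captured by the set families $\mathcal I_A^{(\cdot)}, \mathcal I_U^{(\cdot)}, \mathcal I_W^{(\cdot)}, \mathcal I_e^{(\cdot)}$, and that these are $\eta$-layered/transformed in the sense above — but for \cref{prop:process-success-prob} itself this is only implicit.

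The main obstacle, and where most of the work goes, is Step one/the $\eta$-layered claim: one has to be careful that the parity-stars $S(w, V\setminus R^{(\ptrU-1)})$ and $S(\overline u_\ptrU, V\setminus R^{(\ptrU-1)})$ queried over the course of the procedure, together with the already-exposed edges in the $G[\overline u_\ptrU, W_j\setminus R]$ and in the initially revealed part $A$, genuinely form an $\eta$-transformed sequence once one quotients out the exposed coordinates — i.e. that ``enough new, never-before-revealed potential edges'' separate consecutive queries. The subtlety is that a star $S(w,V\setminus R)$ overlaps heavily with earlier stars at other vertices, so one cannot use $\eta$-layeredness of the raw stars; instead one restricts all queried sets to the sub-$\sigma$-algebra of \emph{not-yet-exposed} potential edges (the $\mathcal I|_\Sigma$ operation), applies \cref{lem:Y-epsUniform} to peel off the exposed part, and then applies a linear transformation (\cref{obs:LinearTrans}, \cref{cor:transformed}) — typically subtracting consecutive stars at the same $W$-vertex — to expose an $\eta$-layered core. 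Getting the indices and the transformation right, and checking that the ``$\ge s$ fresh removals between visits'' really translates into $\ge\eta$ fresh coordinates in the transformed sequence, is the technical heart; the paper defers precisely this to \cref{subsec:comb-lemma,subsec:comb-lemma-2}.
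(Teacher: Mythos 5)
Your proposal captures the right high-level structure: track the removal round by round, show that every parity query is close to uniform given the transcript, apply \cref{lem:Y-epsUniform} and \cref{lem:bipartite}, and union-bound over rounds. But the central quantitative claim --- that the queried family is $\eta$-layered (or $\eta$-transformed) with $\eta\asymp s\asymp n^{1/2+\alpha}$, or ``even $\Theta(n)$'' --- is wrong, and the error is exactly at the technical heart you flag. Your ``$\ge s$ fresh removals between consecutive visits to $W_j$'' argument only produces certificates for the \emph{difference} stars $Q_t(w)=Q_t'(w)\setminus Q_{t-1}'(w)$, $t\ge 1$, each of size $\approx s$. It says nothing about the \emph{first-visit} stars $Q_0(w)=S(w,W\setminus\{w_1,\dots,w_{m(w)-1}\})$: these overlap pairwise across different $w$'s (the potential edge $\{w,w'\}$ typically lies in both $Q_0(w)$ and $Q_0(w')$), and they also overlap the $U$-stars, so none of their coordinates are automatically ``fresh.'' The paper's fix (see \cref{lem:UWlayered} and \cref{claim:QinWLayer}) is that the edges $S(w,W_\#)$ are unique to $Q_0(w)$, because helpers are never drawn from $W_\#$; this certificate has size $|W_\#|=\Theta(n^{1/2-\alpha})$, and that is the bottleneck. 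Hence $\eta=\Theta(n^{1/2-\alpha})$, not $n^{1/2+\alpha}$, which is precisely why the exponent in the final bound is $n^{1/2-\alpha}$ rather than something larger. Without the $W_\#$ trick your ordering has no valid certificates for the $Q_0(w)$'s and the layeredness claim fails.

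A second gap is in your Step three, where you claim the blocks $W_j$ ``never get more than half-depleted'' and that this can be ``verified deterministically'' from the visit count $\lceil(|A|+|U|)/s\rceil$. Both that count and $|W_j|$ equal $\approx n/(2s)$ up to the same constants, so the deterministic worst case is full depletion; keeping $W_j$ half-full requires the (probabilistic) fact that only about half the rounds need a helper, which itself must be extracted from the near-uniformity of the parity queries. Two further misalignments with the paper worth noting: the $\eta$-transformed analysis needed for this proposition is \cref{lem:UWlayered}, proved in \cref{sec:uw-removal}, not the material in \cref{subsec:comb-lemma,subsec:comb-lemma-2} (that concerns \cref{prop:close-prg}); and you omit the decoupling device (sampling an independent copy $G'$ and conditioning on $\Gamma'=\Gamma_0$) that the paper uses to make $R^{(\ptrU-1)}$ and the queried sets deterministic before invoking \cref{lemma:AlmostUniform} and \cref{lem:Y-epsUniform}. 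Without that device, the sets $A_j$ in those lemmas are themselves random and the lemmas do not apply as stated.
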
 

Consider any round $\ptrU$. Suppose we have successfully run $\ptrU-1$ rounds of the $(U,W)$-removal. 
    For convenience, we define $\mathcal{I}_A=\mathcal{I}^{(\ptrU)}_A$, $\mathcal{I}_U=\mathcal{I}^{(\ptrU)}_U$, $\mathcal{I}_W=\mathcal{I}^{(\ptrU)}_W$, $\mathcal{I}_e=\mathcal{I}^{(\ptrU)}_e$. Per our algorithm, these sets are well-defined no matter whether the $\ptrU$-th round succeeds or fails. 
    
    We emphasize that in the following analysis, all the hidden constants do not depend on $\ptrU$.

We know from \cref{def:lrremoval} that after the $\ptrU$-th round, we revealed $\parity(\mathcal{I}_A)$, $\parity(\mathcal{I}_U)$, $\parity(\mathcal{I}_W)$, $G[\mathcal{I}_e]$. Furthermore, letting $\mathcal{J}_A:=\{S(a,G)\mid a\in A\}$, we have previously revealed $G[A]$, $\parity(\mathcal{J}_A)$, $\parity(V)$ due to $G$ being a partially revealed graph.

Let $w_i$ denote the $i$-th vertex removed from $W$.
To compute the success probability of the $(U,W)$-removal, we first show the following lemma, which will imply that the parities we queried in the process are close to being uniformly distributed.
\begin{lemma}\label{lem:UWlayered}
    Let $\Sigma=\binom{W}{2}\cup S(A\cup U,W_{\#})$ and let $\mathcal{I}=\mathcal{J}_A\cup\mathcal{I}_U\cup\mathcal{I}_W\cup\{\binom{V}{2}\}$. The collection of sets of potential edges $\mathcal{I}|_{\Sigma}$ is $\Theta(n^{1/2-\alpha})$-transformed.
\end{lemma}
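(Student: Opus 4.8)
The plan is to exhibit explicitly an invertible linear transformation $T:\FF_2^r\to\FF_2^r$ (with $r=|\mathcal{I}|$) that turns the restricted collection $\mathcal{I}|_\Sigma$ into an $\eta$-layered sequence with $\eta=\Theta(n^{1/2-\alpha})$. First I would fix an ordering of the $r$ sets in $\mathcal{I}|_\Sigma$: put $\binom{V}{2}\cap\Sigma=\binom{W}{2}\cup S(A\cup U,W_\#)$ first, then the images of $\mathcal{J}_A$, then the images of $\mathcal{I}_U$, and finally the images of $\mathcal{I}_W$, and within $\mathcal{I}_W$ order the sets $S(w,V\setminus R^{(\cdot)})$ in the reverse order in which the vertices $w$ were removed from $W$ (i.e.\ the last-removed $w$ comes first).

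The key observation is that because $\Sigma$ only retains potential edges lying inside $W$ or between $A\cup U$ and $W_\#$, most of these restricted sets become stars or near-stars centered at a vertex of $W$, and the ``layering'' will come from the vertices $w\in W_1\sqcup\dots\sqcup W_s$ that are removed during the process, of which there are $\Theta(n^{1/2+\alpha})$ spread over $s$ blocks. Concretely: $S(a,G)\cap\Sigma=S(a,W)$ for $a\in A$ (since $A$ is fully inside the ``revealed'' part and $a\notin W$); $S(\overline u_\ptrU,V\setminus R)\cap\Sigma=S(\overline u_\ptrU,W\setminus R)$ for $\overline u_\ptrU\in U$; and $S(w,V\setminus R)\cap\Sigma=S(w,(A\cup U\cup W)\setminus R)$ for $w\in W$ — but only the first few coordinates of $\Sigma$ (namely $\binom{W}{2}$) distinguish these $w$-stars from one another, since their parts in $S(A\cup U,W_\#)$ overlap. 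This is where the linear transformation enters: I would use $T$ to subtract off, from each $w$-star, the stars of the $w$'s removed later, so that the residual set contains the ``private'' potential edges $S(w, W_j\setminus(\text{stuff removed after }w))$ for the block $W_j$ containing $w$. Since every time we return to block $W_j$ we have removed at least $s$ vertices in between, the number of still-present vertices of $W_j$ at the moment $w$ is removed, minus those removed afterward from $W_j$, is at least the block size minus $|W|/s=\Theta(n^{1/2-\alpha})$ — wait, more carefully: each block has size $\Theta(n^{1/2-\alpha})$ and we remove at most one vertex per visit, so the relevant private stars each have $\Theta(n^{1/2-\alpha})$ surviving coordinates. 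Similarly $\binom{V}{2}\cap\Sigma$ can be arranged (by subtracting all the $w$-stars and the single $u$-star touching $W_\#$, which there are none of since $W_\#$ vertices are never queried) to retain $\binom{W_\#}{2}$ or a large chunk of $W_\#$-internal potential edges, giving it $\Theta(n)\geq\Theta(n^{1/2-\alpha})$ private coordinates, and each $a$-star and $u$-star retains $S(a,W_\#)$ or $S(u,W_\#)$ as private coordinates after subtracting everything ordered after it, again of size $|W_\#|=\Theta(n)$.

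I would organize this as: (i) compute $S\cap\Sigma$ for each $S\in\mathcal{I}$, observing the block structure just described; (ii) define $T$ as the composition of elementary ``subtract a later set from an earlier set'' operations, which is manifestly invertible over $\FF_2$; (iii) verify that in the transformed sequence $(A_1',\dots,A_r')$, for each $j$ the private part $A_j'\setminus(A_{j+1}'\cup\dots\cup A_r')$ still contains a block of $\Theta(n^{1/2-\alpha})$ potential edges that is disjoint from all later sets — using that $W_\#$ has linear size for the $A$-, $U$- and $\binom{V}{2}$-derived sets, and that each block $W_j$ loses only one vertex per $s$-round cycle for the $W$-derived sets; (iv) invoke \cref{def:linear-transform} to conclude $\mathcal{I}|_\Sigma$ is $\Theta(n^{1/2-\alpha})$-transformed. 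The main obstacle I anticipate is step (iii) for the $\mathcal{I}_W$ part: one must carefully track, for each removed $w$ in block $W_j$, exactly which later-removed vertices also lie in $W_j$ and confirm the residual star after the $T$-subtractions has the claimed size; the looping discipline ($\ptrW$-th vertex removed from $W$ lies in $W_{\ptrW\bmod s}$) is exactly what makes this work, since between two consecutive visits to $W_j$ at least $s$ other vertices are removed, but none of them from $W_j$, so at most one coordinate of each private $W_j$-star is destroyed by a later $W_j$-removal and the block's $\Theta(n^{1/2-\alpha})$ size dominates. A minor subtlety is that the ordering of the $w$-stars must be chosen so that ``subtract later sets'' respects the removal order, and one should double-check that $\{\binom{V}{2}\}$ sits at the right place (first) so that after subtracting all $w$-stars its private coordinates are the $W_\#$-internal potential edges, which are never touched again.
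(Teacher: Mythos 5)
Your high-level plan — restrict to $\Sigma$, apply an invertible linear map, then exhibit an $\eta$-layered ordering with certificates — is exactly the paper's strategy. But the details, especially for the $\mathcal{I}_W$ part, have gaps.

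First, several of the restriction computations are off. Since $a\in A$ implies $a\notin W$, the edges of $S(a,V)$ that land in $\Sigma=\binom{W}{2}\cup S(A\cup U,W_\#)$ are exactly those into $W_\#$, so $S(a,V)\cap\Sigma=S(a,W_\#)$, not $S(a,W)$; similarly $S(u,V\setminus R)\cap\Sigma=S(u,W_\#)$. For $w\in W_j$ with $j\geq 1$, we have $w\notin W_\#$, so $S(w,V\setminus R)\cap S(A\cup U,W_\#)=\varnothing$ and $S(w,V\setminus R)\cap\Sigma=S(w,W\setminus R)$ — there is no ``$A\cup U$'' piece, and the claim that ``their parts in $S(A\cup U,W_\#)$ overlap'' is based on a miscomputation. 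Also $|W_\#|=\Theta(n^{1/2-\alpha})$, not $\Theta(n)$; this doesn't change the direction of your inequality, but it suggests the sizes weren't tracked carefully.

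The more serious issue is structural. In $\mathcal{I}_W$ there is not one star per removed $w$: each time $W_j$ is queried, $S(w,V\setminus R^{(\cdot)})$ is added for \emph{every} surviving $w\in W_j$, so a single $w$ (removed or not) gives rise to a whole \emph{nested chain} $Q'_0(w)\subseteq Q'_1(w)\subseteq\cdots$. The transformation that makes this layerable is to replace each chain by its consecutive same-center differences $Q_t(w):=Q'_t(w)\setminus Q'_{t-1}(w)$. Your proposed $T$ (``subtract off, from each $w$-star, the stars of the $w$'s removed later'') subtracts stars with \emph{different} centers, and two different-center stars $S(w,\cdot)$, $S(w',\cdot)$ intersect in at most the single edge $\{w,w'\}$, so this $T$ essentially does nothing to the nested overlaps — it cannot produce a layered family. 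Your ordering by ``reverse removal order of $w$'' is also underspecified once you account for un-removed $w$'s and multiple stars per center.

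Finally, the certificate you propose for the $w$-stars, $S(w,W_j\setminus(\text{stuff removed after }w))$, is fragile: its size depends on how many vertices of $W_j$ are eventually removed, and the lemma is used mid-process at an arbitrary round $\ptrU$, so one cannot rely on a favorable balance there. The paper avoids this entirely by taking the certificate of $Q_0(w)$ to be $S(w,W_\#)$: it has size exactly $|W_\#|=\Theta(n^{1/2-\alpha})$, and it is disjoint from every other $Q_t(w')$ because $W_\#$ is disjoint from all the $W_j$ blocks and is never touched by the removal. For $Q_t(w)$ with $t\geq1$, the paper notes any edge $\{w,w'\}\in Q_t(w)$ can only also appear in $Q_0(w')$, and orders all $Q_0$'s before all $Q_t$'s ($t\geq1$), so $Q_t(w)$ is its own certificate. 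This ordering is not the reverse removal order; it is the $Q_0$-first ordering that makes the certificates work. You should rebuild step (iii) around the nested-chain structure and the $W_\#$-based certificates.
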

\begin{proof}
    
    We begin by examining what sets in $\mathcal I|_{\Sigma}$ 
    look like. We start with $\mathcal I_W|_{\Sigma}$.

    Consider any vertex $w\in W_j$. By definition of the $(U,W)$-removal, we know that every time a star centered at $w$ gets added to $\mathcal I_W$, we are at some round $\ptrU'$ with $\ptrW'=j\pmod s$, where $\ptrW'-1$ is the number of vertices already removed from $W$ before round $\ptrU'$. In this round, we added the star $S(w, V\setminus R^{(\ptrU'-1)})$ to $\mathcal I_W$. Clearly $S(w, V\setminus R^{(\ptrU'-1)})$ is disjoint from $S(A\cup U,W_{\#})$. Furthermore, since $R^{(\ptrU'-1)} \cap W=\{w_1,\dots,w_{\ptrW'-1}\}$, we get that
$$S(w, V\setminus R^{(\ptrU'-1)})\cap \binom{W}{2}= S(w, W\setminus\{w_1,\dots,w_{\ptrW'-1}\}).$$
Thus in this round, the star $S(w, W\setminus\{w_1,\dots,w_{\ptrW'-1}\})$ is added to $ \mathcal I_W|_{\Sigma}$.

For $w\in \bigcup_{j=1}^s W_j$, let $m(w)$ be the maximum index $\ptrW'$ such that $S(w, W\setminus\{w_1,\dots,w_{\ptrW'-1}\})\in \mathcal I_W|_{\Sigma}$. In other words, we have
\begin{align*}
    m(w)=\begin{cases}
        \ptrW' & \text{if } w=w_{\ptrW'}\in R,\\
        \max\{\ptrW'\mid \ptrW'\equiv j\pmod s,\, 1\leq \ptrW'\leq |W\cap R|\} & \text{if $w\in W_j\setminus R$ and $W_j\cap R\neq\varnothing$}\\
        \perp & \text{if $w\in W_j\setminus R$ and $W_j\cap R=\varnothing$}
    \end{cases}
\end{align*}

Per the above, if $m(w)\neq\perp$, then the stars centered at $w$ in $\mathcal I_W|_{\Sigma}$ are
\begin{align*}
    S(w, W\setminus\{w_1,\dots,w_{m(w)-1}\}),\, &S(w, W\setminus\{w_1,\dots,w_{m(w)-s-1}\}),\dots\\
    &\qquad\dots, S\lpr{w, W\setminus\lcr{w_1,\dots,w_{m(w)-\floor{\frac{m(w)-1}{s}}s-1}}}
\end{align*}
(where we use the convention that $\{w_1,\dots,w_0\}$ is the empty set).
In the case of $m(w)\neq\perp$, for every $0\leq t\leq \floor{\frac{m(w)-1}{s}}$, define the star 
\[Q'_t(w):=S(w, W\setminus\{w_1,\dots,w_{m(w)-ts-1}\}) \in \cI_W|_{\Sigma}.\]
Note that we have $Q'_0(w)\subseteq Q'_1(w)\subseteq \dots\subseteq Q'_{\floor{\frac{m(w)-1}{s}}}(w)$. Thus, we obtain that
\begin{align}\label{eq:I-sigma-1}
    \cI_W|_{\Sigma}=\left\{Q_t'(w) \lmid w\in \bigcup_{j=1}^sW_j,\, m(w)\neq\perp,\, 0\leq t\leq \floor{\frac{m(w)-1}{s}}\right\}.
\end{align}

We now investigate what the other sets in $\mathcal{I}|_{\Sigma}$ look like. We clearly have
\begin{align}\label{eq:I-sigma-2}
    \left\{\binom{V}{2}\right\}\bigl\vert_{\Sigma}=\{\Sigma\}.
\end{align}
Since every potential edge set in $\mathcal{J}_A\cup\mathcal{I}_U$ is disjoint from $\binom{W}{2}$, we get that
\begin{align}\label{eq:I-sigma-3}(\mathcal{J}_A)|_{\Sigma}\cup\mathcal{I}_U|_{\Sigma}=(\mathcal{J}_A)|_{S(A\cup U,W_\#)}\cup\mathcal{I}_U|_{S(A\cup U,W_\#)}=\{S(u,W_{\#})\mid u\in A\cup U\}.
\end{align}
Combining the above, we get that $\mathcal I|_{\Sigma}$ is the union of \cref{eq:I-sigma-1,eq:I-sigma-2,eq:I-sigma-3}.

We wish to show that $\mathcal I|_{\Sigma}$ is $\Theta(n^{1/2-\alpha})$-transformed. To do so, for every $Q_t'(w)\in \mathcal I_W|_{\Sigma}$ with $t\geq 1$, we define another set $Q_t(w):=Q'_t(w)\setminus Q'_{t-1}(w)$; for every $Q_0'(w)\in \mathcal I_W|_{\Sigma}$, we simply let $Q_0(w):=Q_0'(w)$. Then we define the new set family
\[\mathcal{Q}:= \left\{Q_t(w)\lmid w\in \bigcup_{j=1}^sW_j,\, m(w)\neq\perp,\, 0\leq t\leq \floor{\frac{m(w)-1}{s}}\right\}.\]

Note that $\mathcal I_W|_{\Sigma}$ is obtained from $\mathcal{Q}$ by applying an invertible linear transformation $T:\FF_2^{|\mathcal{Q}|}\to \FF_2^{|\mathcal{Q}|}$ (see \cref{def:linear-transform}). Extending $T$ with the identity map on $\{S(u,W_{\#})\mid u\in A\cup U\}\cup\{\Sigma\}$, we get that $\mathcal I|_{\Sigma}$ is obtained from $\mathcal{Q}\cup \{S(u,W_{\#})\mid u\in A\cup U\}\cup\{\Sigma\}$ by an invertible linear transformation $T':\FF_2^{|\mathcal{I}|}\to \FF_2^{|\mathcal{I}|}$.

It remains to show that we can order the sets in $\mathcal{Q}\cup \{S(u,W_{\#})\mid u\in A\cup U\}\cup\{\Sigma\}$ to form a $\Theta(n^{1/2-\alpha})$-layered sequence. We first note that $\binom{W_{\#}}{2}\subseteq \Sigma$ is disjoint from all the sets in $\mathcal{Q}\cup \{S(u,W_{\#})\mid u\in A\cup U\}$, with $\labs{\binom{W_{\#}}{2}}=\Omega(n^{1/2-\alpha})$. This means that we can put $\Sigma$ as the first set in the ordering.

Next, we note that the stars $S(u,W_{\#})$, $u\in A\cup U$ are pairwise disjoint and are disjoint from any set in $\mathcal{Q}$. Moreover, we know that $\abs{S(u,W_{\#})}=\abs{W_{\#}}=\Theta(n^{1/2-\alpha})$. Thus, we may put these stars right after $\Sigma$ in any order. It remains to show the following claim.

\begin{claim}\label{claim:QinWLayer}
        The stars in $\mathcal{Q}$ are $\Theta(n^{1/2-\alpha})$-layered.
    \end{claim}
    \begin{proof*}
    It  suffices to order the stars in $\mathcal{Q}$ such that for each star, there exists a subset of size at least $\Omega(n^{1/2 - \alpha})$ that does not appear in any other sets appearing later in the ordering. We call such a set a \emph{certificate} for the star. 
        
        Consider any ordering where all the  $Q_0(w)$ appear earlier than all the  $Q_t(w), t\geq 1$. For each $Q_0(w)$, we set its certificate as $S(w,W_{\#})$. For each $Q_t(w)$, we set its certificate as itself. It is clear that each certificate has size at least $\min\{s,|W_\#|\}=\Omega(n^{1/2-\alpha})$, and it is easy to check that each star contains its certificate.
        
        Note that for any $w$, the potential edges in $S(w,W_{\#})$ appear only in $Q_0(w)$. Thus $S(w,W_{\#})$ is indeed a certificate of $Q_0(w)$. Next, consider any potential edge $\{w,w'\}\in Q_t(w)$ for some fixed $t\geq 1$. Observe that this potential edge can only appear in $Q_t(w)$ or $Q_0(w')$. Thus, $Q_t(w)$ is indeed a certificate of $Q_t(w)$ since $Q_0(w')$ appears earlier than $Q_t(w)$ in the ordering.
    \end{proof*}
Thus, we are able to order the sets in $\mathcal{Q}\cup \{S(u,W_{\#})\mid u\in A\cup U\}\cup\{\Sigma\}$ to form a $\Theta(n^{1/2-\alpha})$-layered sequence.
\end{proof}

Now we prove \cref{prop:process-success-prob}.

\begin{proof}[Proof of \cref{prop:process-success-prob}]

Fix any $\ptrU$. Suppose we have successfully run $\ptrU-1$ rounds of the $(U,W)$-removal. We wish to show that the $\ptrU$-th round succeeds with high probability.

Recall that before the $\ptrU$-th round in the $(U,W)$-removal, we revealed $\parity(\mathcal{I}^{(\ptrU-1)}_A)$, $\parity(\mathcal{I}^{(\ptrU-1)}_U)$, $\parity(\mathcal{I}^{(\ptrU-1)}_W)$, $G[\mathcal{I}^{(\ptrU-1)}_e]$. In addition, $G[A]$, $\parity(\mathcal{J}_A)$, $\parity(V)$ are revealed due to $G$ being a partially revealed graph. Let 
\[\Sigma_1=\binom{V}{2}\setminus\left(\binom{V\setminus R^{(\ptrU-1)}}{2}\cup\Sigma\right),\]
where we recall that $\Sigma = \binom{W}{2}\cup S(A\cup U, W_\#)$. For convenience, we reveal $G[\Sigma_1]$ in the following analysis.

    Note that for any event $\cE$,  
    \[\PP[G\in\cE]=\sum_{H}\PP[G[\Sigma_1]=H]\cdot\PP[G\in\cE\mid G[\Sigma_1]=H].\] 
    Therefore, it is sufficient to show that the $\ptrU$-th round succeeds with high probability given that $G[\Sigma_1]=H$ for any feasible $H$, i.e., $G[\Sigma_1]$ is revealed.
    
    We first observe that we can simplify the revealed information due to dependencies between the revealed information.
        \begin{observation}\label{obs:IA}
         $\parity(\mathcal{I}^{(\ptrU-1)}_A)$ is determined by $G[\Sigma_1]$ and $\parity(\mathcal{J}_A)$.
    \end{observation}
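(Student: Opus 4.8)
The plan is to unpack what $\parity(\mathcal I_A^{(\ptrU-1)})$ actually consists of and then match each piece against information already contained in $G[\Sigma_1]$ together with $\parity(\mathcal J_A)$. Recall that every set in $\mathcal I_A^{(\ptrU-1)}$ is of the form $S(\overline u_{\ptrU'}, V\setminus R^{(\ptrU'-1)})$ for some round $\ptrU'<\ptrU$ with $\overline u_{\ptrU'}\in A$, i.e.\ $\overline u_{\ptrU'}=a$ for some $a\in A$. Fix such an $a$ and the round $\ptrU'$ at which the star centered at $a$ was revealed. I would decompose
\[
S(a, V\setminus R^{(\ptrU'-1)}) \;=\; S(a, G)\;\setminus\; S(a, R^{(\ptrU'-1)}),
\]
so that
\[
\parity\!\left(a, V\setminus R^{(\ptrU'-1)}\right) \;=\; \parity(S(a,G)) \;+\; \parity\!\left(a, R^{(\ptrU'-1)}\right) \pmod 2.
\]
The first term $\parity(S(a,G))$ is exactly the element of $\parity(\mathcal J_A)$ indexed by $a$, which is given. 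For the second term it suffices to show that $S(a, R^{(\ptrU'-1)})\subseteq \Sigma_1$, because then $\parity(a, R^{(\ptrU'-1)})$ is determined by $G[\Sigma_1]$.

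The key step is therefore the claim that $S(a, R^{(\ptrU'-1)})$ is disjoint from both $\binom{V\setminus R^{(\ptrU-1)}}{2}$ and from $\Sigma=\binom{W}{2}\cup S(A\cup U, W_\#)$. Disjointness from $\binom{V\setminus R^{(\ptrU-1)}}{2}$ is immediate: every potential edge in $S(a, R^{(\ptrU'-1)})$ has an endpoint in $R^{(\ptrU'-1)}\subseteq R^{(\ptrU-1)}$ (since $\ptrU'<\ptrU$ and the removed set only grows), hence is not contained in $V\setminus R^{(\ptrU-1)}$. Disjointness from $S(A\cup U, W_\#)$ follows because the revealed part $A$ is disjoint from $W$, hence from $W_\#$, and $W_\#$ is never touched by the removal (the process only removes vertices of $W$ from $W_1\cup\dots\cup W_s$), so $R^{(\ptrU'-1)}\cap W_\#=\varnothing$; thus a potential edge $\{a,r\}$ with $r\in R^{(\ptrU'-1)}$ can equal some $\{x,w\}$ with $x\in A\cup U$, $w\in W_\#$ only if $r\in W_\#$, a contradiction. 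Finally, disjointness from $\binom{W}{2}$ holds because $a\in A$ and $A\cap W=\varnothing$, so no potential edge in $S(a, R^{(\ptrU'-1)})$ lies inside $W$. Hence $S(a, R^{(\ptrU'-1)})\subseteq\Sigma_1$, as desired.

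Putting the pieces together, each $\parity(S(a, V\setminus R^{(\ptrU'-1)}))\in\parity(\mathcal I_A^{(\ptrU-1)})$ equals the sum modulo $2$ of the corresponding entry of $\parity(\mathcal J_A)$ and of $\parity(S(a, R^{(\ptrU'-1)}))$, the latter being read off from $G[\Sigma_1]$; ranging over all such stars gives the observation. I do not expect any real obstacle here: the only thing to be careful about is the bookkeeping that $R^{(\ptrU'-1)}\cap W_\#=\varnothing$ and $R^{(\ptrU'-1)}\subseteq R^{(\ptrU-1)}$, both of which are immediate from the definition of the $(U,W)$-removal, and the observation that $A$ lies outside $W$.
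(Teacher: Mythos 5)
Your proposal is correct and takes essentially the same approach as the paper: split $\parity(a, V\setminus R^{(\ptrU'-1)})$ into $\parity(a,V)\in\parity(\mathcal J_A)$ plus $\parity(a,R^{(\ptrU'-1)})$, and note the latter is read off from $G[\Sigma_1]$. The only difference is that you spell out the verification that $S(a,R^{(\ptrU'-1)})\subseteq\Sigma_1$, which the paper leaves implicit.
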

    \begin{proof*}Recall that
    \[
    \mathcal{I}^{(\ptrU-1)}_A=\{S(\overline u_{\ptrU'},V\setminus R^{(\ptrU'-1)})\mid 1\leq \ptrU'\leq \min\{\ptrU-1,|A|\}\}.
    \]
    For every such $\ptrU'$, since $\overline u_{\ptrU'}\in A$, $\parity(\overline u_{\ptrU'},V\setminus R^{(\ptrU'-1)})$ is determined by $\parity(\overline u_{\ptrU'},V)\in\parity(\mathcal J_A)$ and the graph $G[\overline u_{\ptrU'}, R^{(\ptrU'-1)}]$, which is a subgraph of $G[\Sigma_1]$.
    \end{proof*}
    \begin{observation}
        The graph $G[\mathcal{I}_e^{(\ptrU-1)}]$ is determined by $G[\Sigma_1]$.
    \end{observation}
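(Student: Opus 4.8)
The plan is to show that every potential edge set occurring in $\mathcal{I}_e^{(\ptrU-1)}$ is in fact a subset of $\Sigma_1$; once this is established, the observation is immediate, since knowing $G[\Sigma_1]$ (i.e.\ the presence or absence of every potential edge in $\Sigma_1$) determines $G[S]$ for every $S\subseteq \Sigma_1$, and hence determines $G[\mathcal{I}_e^{(\ptrU-1)}]$.

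First I would unwind \cref{def:lrremoval} to record exactly which sets land in $\mathcal{I}_e$. The only step in the $(U,W)$-removal that enlarges $\mathcal{I}_e$ is the branch of step (2) where $\parity(\overline u_\ptrU, V\setminus R^{(\ptrU-1)})=1$: there we reveal $G[\overline u_\ptrU, W_j\setminus R^{(\ptrU-1)}]$ and add the single star $S(\overline u_\ptrU, W_j\setminus R^{(\ptrU-1)})$, where $j=\ptrW\pmod s\in\{1,\dots,s\}$. Thus $\mathcal{I}_e^{(\ptrU-1)}$ is precisely the collection of stars $S(\overline u_{\ptrU'}, W_{j'}\setminus R^{(\ptrU'-1)})$ over those rounds $\ptrU'\le \ptrU-1$ that entered this odd-parity branch, with $j'\in\{1,\dots,s\}$.

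Next I would check that each such star lies in $\Sigma_1=\binom{V}{2}\setminus\bigl(\binom{V\setminus R^{(\ptrU-1)}}{2}\cup\Sigma\bigr)$. On the one hand, in the odd-parity branch the vertex $\overline u_{\ptrU'}$ is removed during round $\ptrU'$, so $\overline u_{\ptrU'}\in R^{(\ptrU')}\subseteq R^{(\ptrU-1)}$ (using $\ptrU'\le\ptrU-1$ and that the first $\ptrU-1$ rounds have been run successfully); hence no potential edge incident to $\overline u_{\ptrU'}$ lies in $\binom{V\setminus R^{(\ptrU-1)}}{2}$. On the other hand, the star consists of pairs $\{\overline u_{\ptrU'},w\}$ with $\overline u_{\ptrU'}\in A\cup U$ and $w\in W_{j'}$, and since $j'\in\{1,\dots,s\}$ we have $w\notin W_{\#}$; so these pairs avoid $\binom{W}{2}$ (one endpoint lies outside $W$) and avoid $S(A\cup U,W_{\#})$, hence avoid $\Sigma$. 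Therefore the star is contained in $\Sigma_1$, and the observation follows.

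There is no genuine obstacle here; the only thing to be careful about is the bookkeeping — correctly identifying that $\mathcal{I}_e$ only ever receives stars in the odd-parity branch of step (2), and that the block index $j'$ always lies in $\{1,\dots,s\}$ and is never the special index $\#$, which is what makes each such star disjoint from $S(A\cup U,W_{\#})$.
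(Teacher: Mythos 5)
Your proof is correct and takes essentially the same route as the paper's: identify that $\mathcal{I}_e^{(\ptrU-1)}$ consists of stars $S(\overline u_{\ptrU'}, W_{j'}\setminus R^{(\ptrU'-1)})$ added in the odd-parity branch, and then verify that each such star avoids all three pieces whose complement defines $\Sigma_1$, namely $\binom{V\setminus R^{(\ptrU-1)}}{2}$ (because $\overline u_{\ptrU'}$ has already been removed), $\binom{W}{2}$ (because $\overline u_{\ptrU'}\in A\cup U$), and $S(A\cup U, W_{\#})$ (because $j'\ne\#$). If anything your write-up is slightly more careful than the paper's, since you explicitly restrict to rounds that entered the odd-parity branch rather than writing the set loosely over all $\ptrU'\le\ptrU-1$.
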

    \begin{proof*}
Recall that $\mathcal{I}_e^{(\ptrU-1)}$ is the collection  of sets  $S(\overline u_{\ptrU'},W_j\setminus R^{(\ptrU'-1)})$ for all $\ptrU'\leq \ptrU-1$ such that $\parity(\overline u_{\ptrU'}, V\setminus R^{(\ptrU'-1)})=1$, where $j\in [s]$ is as specified in round $\ptrU'$ of \cref{def:lrremoval}.
    Therefore, it suffices to show that every $S(\overline u_{\ptrU'},W_j\setminus R^{(\ptrU'-1)})\subseteq \Sigma_1$. This follows because
    \begin{itemize}
        \item $\overline u_{\ptrU'}\notin V\setminus R^{(\ptrU-1)}$, which implies $S(\overline u_{\ptrU'},W_j\setminus R^{(\ptrU'-1)})\cap \binom{V\setminus R^{(\ptrU-1)}}{2}=\varnothing$,
        \item $\overline u_{\ptrU'}\notin W$, which implies $S(\overline u_{\ptrU'},W_j\setminus R^{(\ptrU'-1)})\cap \binom{W}{2}=\varnothing$, and
        \item $W_j\cap W_{\#}=\varnothing$, which implies $S(\overline u_{\ptrU'},W_j\setminus R^{(\ptrU'-1)})\cap S(A\cup U,W_{\#})=\varnothing$.\qedhere
    \end{itemize}
    \end{proof*}
 
        From these two claims, we conclude that the collection of all the information we revealed is
    \[\Gamma := \lpr{\parity\lpr{\mathcal{I}^{(\ptrU-1)}_U},\parity\lpr{\mathcal{I}^{(\ptrU-1)}_W},\parity(\mathcal{J}_A),\parity(V),G[A],G[\Sigma_1]}.\]
    
    Recall that in the $\ptrU$-th round, we query the following:
    \begin{enumerate}
        \item the parity of $S_1:=S(\overline u_{\ptrU},V\setminus R^{(\ptrU-1)})$;
    \end{enumerate}
    and also the following when $\parity_G(S_1)=1$:
    \begin{enumerate}\setcounter{enumi}{1}
        \item the graph $S_2:=S(\overline u_\ptrU, W_j\setminus R^{(\ptrU-1)})$, with $j=\ptrW\pmod s$;
        \item the parities of $S_3(w):=S(w, V\setminus R^{(\ptrU-1)})$ for all $w\in W_j\setminus R^{(\ptrU-1)}$ where $j=\ptrW\pmod s$.
    \end{enumerate}
    
    Fix an assignment $\Gamma_0\in\supp(\Gamma)$. Assume that before the $\ptrU$-th round, the information we revealed is given by $\Gamma=\Gamma_0$. For convenience, in the discussion in this paragraph and the next paragraph, we will view $R^{(\ptrU-1)}$ as a sequence rather than a set, ordered according to the order in which its vertices are removed during the process. Note that although the sequence $R^{(\ptrU-1)}$ consists of random vertices, it is determined by $\Gamma$, and hence the sequence $R^{(\ptrU-1)}$ is determined when conditioning on $\Gamma=\Gamma_0$. Moreover, when conditioning on $\parity_G(S_1)=1$, the sets $S_2$ and $S_3(w),w\in W_j\setminus R^{(\ptrU-1)}$ are well-defined and determined. We want to estimate the probability of the output to each query as if the random vertex sets are deterministic, so we need to decouple the dependency between $\Gamma$ and $R^{(\ptrU-1)}$ and also the dependency between $\parity_G(S_1)$ and well-defining $S_2$ and $S_3(w)$ for all $w\in W_j\setminus R^{(\ptrU-1)}$. Thus, we sample an independent $p$-random graph $G'$ on $V$ and let $\Gamma'$ be the set of random variables obtained from $\Gamma$ by replacing $G$ with $G'$, i.e., 
    \[\Gamma' := \lpr{\parity_{G'}\lpr{\mathcal{I}^{(\ptrU-1)}_U},\parity_{G'}\lpr{\mathcal{I}^{(\ptrU-1)}_W},\parity_{G'}(\mathcal{J}_A),\parity_{G'}(V),G'[A],G'[\Sigma_1]}.\]

    Let $R^{(\ptrU-1)}_0$ be the outcome of $R^{(\ptrU-1)}$ given $\Gamma=\Gamma_0$, which is deterministic. Then we know that the distribution of $(G\mid \Gamma=\Gamma_0)$ and $(G\mid \Gamma=\Gamma_0,R^{(\ptrU-1)}=R^{(\ptrU-1)}_0)$ are identical. That is, they are $0$-close, or using the notation in \cref{def:eps-close}, we write
    \[(G\mid \Gamma=\Gamma_0)=(G\mid \Gamma=\Gamma_0,R^{(\ptrU-1)}=R^{(\ptrU-1)}_0).\]
    When conditioning on $R^{(\ptrU-1)}=R^{(\ptrU-1)}_0$, the vertices involved in the definition of $\Gamma'$ are fixed. Thus, we know that $(G,\Gamma)$ and $(G',\Gamma')$ are conditionally independent given $R^{(\ptrU-1)}=R^{(\ptrU-1)}_0$. Therefore, we have 
    \[(G'\mid \Gamma'=\Gamma_0,\Gamma=\Gamma_0)=(G'\mid \Gamma'=\Gamma_0,R^{(\ptrU-1)}=R^{(\ptrU-1)}_0).\]
    Since both $(G\mid \Gamma=\Gamma_0,R^{(\ptrU-1)}=R^{(\ptrU-1)}_0)$ and $(G'\mid \Gamma'=\Gamma_0,R^{(\ptrU-1)}=R^{(\ptrU-1)}_0)$ account for the random graphs that satisfy the assignment $\Gamma_0$ given $R^{(\ptrU-1)}=R^{(\ptrU-1)}_0$, it follows that
    \[(G\mid \Gamma=\Gamma_0)=(G'\mid \Gamma'=\Gamma_0,\Gamma=\Gamma_0).
    \]
    Similarly, we also have
    \[(G\mid \parity_G(S_1)=1,\Gamma=\Gamma_0)=(G'\mid \parity_{G'}(S_1)=1,\Gamma'=\Gamma_0,\parity_G(S_1)=1,\Gamma=\Gamma_0).
    \]
    Let $\cE_1$ denote the event that $\Gamma=\Gamma_0$ and $\cE_2$ denote the event that both $\Gamma=\Gamma_0$ and $\parity_G(S_1)=1$.
    Suggested by the discussion above, we can focus on analyzing the distributions of $(G'\mid \Gamma',\cE_1)$ and $(G'\mid  \parity_{G'}(S_1),\Gamma',\cE_2)$. For convenience, we set $\mathcal{I}^{(\ptrU-1)}=\mathcal{J}_A\cup \mathcal{I}_U^{(\ptrU-1)}\cup \mathcal{I}_W^{(\ptrU-1)}\cup \lcr{\binom{V}{2}}$, and hence $\Gamma'$ is the collection of $\parity_{G'}(\mathcal{I}^{(\ptrU-1)})$, $G'[A]$, and $G'[\Sigma_1]$.

    We first analyze the distribution of $\parity_{G'}(S_1)$. If $\ptrU\leq \abs{A}$, i.e., $\overline u_{\ptrU}\in A$, then it follows from a similar argument as \cref{obs:IA} that the parity $\parity_{G'}(S_1)$ is determined by $\Gamma'$. If $\ptrU> \abs{A}$, i.e., $\overline u_{\ptrU}\in U$, then we have the following claim.
    \begin{claim}\label{claim:parS1-close-to-uniform}
        If $\ptrU>\abs{A}$, then $(\parity_{G'}(S_1)\mid \Gamma'=\Gamma_0,\cE_1)$ is $e^{-\Theta(n^{1/2-\alpha})}$-uniform.
    \end{claim}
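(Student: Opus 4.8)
\textbf{Proof proposal for \cref{claim:parS1-close-to-uniform}.}

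The plan is to verify that $\parity_{G'}(S_1)$ is a parity of iid Bernoulli$(p)$ variables whose support behaves uniformly, even after conditioning on the already-revealed information $\Gamma'=\Gamma_0$ (together with the event $\cE_1$). Since $\overline u_\ptrU\in U$, none of the potential edges in $S_1=S(\overline u_\ptrU,V\setminus R^{(\ptrU-1)})$ have been revealed by $G'[A]$ or $G'[\Sigma_1]$ — indeed $\overline u_\ptrU\notin A$, $\overline u_\ptrU\notin W$, and $\overline u_\ptrU\notin W_\#$ (the last holding once $\ptrU-1$ rounds have succeeded, so $\overline u_\ptrU$ is one of the yet-unremoved vertices of $V\setminus R^{(\ptrU-1)}$ outside $W$), which means $S_1$ is disjoint from $\binom{A}{2}$ and from $\Sigma_1=\binom{V}{2}\setminus(\binom{V\setminus R^{(\ptrU-1)}}{2}\cup\Sigma)$. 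So conditioning on $G'[A]$ and $G'[\Sigma_1]$ is irrelevant to the edges touching $S_1$; the only constraints that matter are the parity constraints $\parity_{G'}(\mathcal I^{(\ptrU-1)})=\Gamma_0$ restricted to where they can interact with $S_1$.

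Concretely, I would write $S_1$ itself as one more set of potential edges and append it to the family $\mathcal I:=\mathcal I^{(\ptrU-1)}=\mathcal J_A\cup\mathcal I_U^{(\ptrU-1)}\cup\mathcal I_W^{(\ptrU-1)}\cup\{\binom V2\}$. After restricting everything to $\Sigma=\binom W2\cup S(A\cup U,W_\#)$ — which is legitimate because, by the observation preceding \cref{lem:UWlayered}, the edges of $G'$ outside $\Sigma$ are exactly those we are allowed to treat as already exposed via $G'[\Sigma_1]$ — I would invoke \cref{lem:UWlayered}, which states that $\mathcal I|_\Sigma$ is $\Theta(n^{1/2-\alpha})$-transformed; the analogous argument (adding one further disjoint star $S_1\cap\Sigma=S(\overline u_\ptrU, W_\#)$, which is disjoint from all sets already in the layered sequence and has size $|W_\#|=\Theta(n^{1/2-\alpha})$) shows that $\mathcal I|_\Sigma\cup\{S_1\cap\Sigma\}$ is still $\Theta(n^{1/2-\alpha})$-transformed. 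Here I would want to treat the $X_i$'s of \cref{lemma:AlmostUniform} as the indicators of the potential edges in $\Sigma$, and use \cref{lem:Y-epsUniform} to pass from ``parities of $\Sigma$-restricted sets are $\eps$-uniform'' to ``parities of the full sets (including the contributions from the already-exposed edges of $G'[\Sigma_1]$) are $\eps$-uniform,'' with $\eps=e^{-\Theta(n^{1/2-\alpha})}$.

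The final step is bookkeeping: once the joint vector $(\parity_{G'}(\mathcal I^{(\ptrU-1)}),\parity_{G'}(S_1))$ is $e^{-\Theta(n^{1/2-\alpha})}$-uniform, \cref{cor:transformed} combined with \cref{lem:Y-epsUniform}(1) gives that the conditional distribution of $\parity_{G'}(S_1)$ given the assignment $\parity_{G'}(\mathcal I^{(\ptrU-1)})=\Gamma_0$ is $e^{-\Theta(n^{1/2-\alpha})}$-uniform, and since $G'[A]$ and $G'[\Sigma_1]$ contribute nothing new (they determine none of the $S_1$-edges), this is the same as the distribution conditioned on all of $\Gamma'=\Gamma_0$; the event $\cE_1$ on the $G'$-side is automatic because $\Gamma'=\Gamma_0$ already records it. The main obstacle I anticipate is not any single estimate but the care needed to make precise that $S_1$ genuinely adds a fresh $\Theta(n^{1/2-\alpha})$-sized ``certificate'' to the layered sequence — i.e., that the star $S(\overline u_\ptrU,W_\#)$ is disjoint from every $Q_t(w)$, every $S(u,W_\#)$ with $u\neq\overline u_\ptrU$, and from $\Sigma$ minus those stars — and to confirm that the linear-transformation trick of \cref{obs:LinearTrans} can absorb $S_1$ without disturbing the invertibility of $T'$. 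All of this parallels the proof of \cref{lem:UWlayered} almost verbatim, so once that correspondence is set up the claim follows.
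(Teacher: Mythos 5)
Your proposal is essentially correct and takes the same route as the paper: restrict the family of parity-constrained sets to $\Sigma$, invoke \cref{lem:UWlayered} and \cref{cor:transformed} to get $e^{-\Theta(n^{1/2-\alpha})}$-uniformity of the $\Sigma$-restricted parities, lift via \cref{lem:Y-epsUniform}(1), and read off the conditional distribution of $\parity_{G'}(S_1)$.

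One small redundancy worth noting: you append $S_1$ to $\mathcal I$ as if it were a new set and claim $S_1\cap\Sigma=S(\overline u_{\ptrU},W_{\#})$ is ``disjoint from all sets already in the layered sequence.'' That disjointness statement is not right as written (it is contained in $\Sigma$, and is in fact already one of the stars $S(u,W_{\#})$ of \cref{eq:I-sigma-3}); more to the point, the addition is unnecessary because \cref{lem:UWlayered} already operates with the round-$\ptrU$ families $\mathcal I_U=\mathcal I_U^{(\ptrU)}$, $\mathcal I_W=\mathcal I_W^{(\ptrU)}$ (these abbreviations are fixed at the start of \cref{subsec:LRsuccess}), and when $\overline u_{\ptrU}\in U$ the family $\mathcal I_U^{(\ptrU)}$ already contains $S_1$. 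Hence $\mathcal I^{(\ptrU-1)}\cup\{S_1\}\subseteq\mathcal I$ already, \cref{lem:UWlayered} applies directly, and no fresh certificate argument is needed. Once you notice this, your proof and the paper's coincide.
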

    \begin{proof*}
         In this proof, we always condition on $\cE_1$, and we will drop this conditioning from our notations for convenience.
         By \cref{lem:UWlayered}, we know that $\mathcal{I}|_{\Sigma}$ is $\Theta(n^{1/2-\alpha})$-transformed, where $\mathcal{I}=\mathcal{J}_A\cup\mathcal{I}_U\cup\mathcal{I}_W\cup\{\binom{V}{2}\}$ is given in \cref{lem:UWlayered}. Since $\Sigma$ is disjoint from $\binom{A}{2}\cup \Sigma_1$, we know that $(G'[\Sigma]\mid G'[A],G'[\Sigma_1])$ is $p$-random. Therefore, by \cref{cor:transformed} and the fact that the number of potential edge sets in $\mathcal{I}$ is polynomial in $n$, we can conclude that $(\parity_{G'}(\mathcal{I}|_{\Sigma})\mid G'[A],G'[\Sigma_1])$ is $e^{-\Theta(n^{1/2-\alpha})}$-uniform. So, we can apply \cref{lem:Y-epsUniform} on the graph $(G'\mid G'[A],G'[\Sigma_1])$ with $\vec{X}$ being $\binom{V}{2}\setminus \Sigma$ and $\vec{X}'$ being $\Sigma$ and get that
     \begin{enumerate}[label=(\alph*)]
         \item the parities $(\parity_{G'}(\mathcal{I})\mid G'[A],G'[\Sigma_1])$ are $e^{-\Theta(n^{1/2-\alpha})}$-uniform, and\label{claim:E1analysis_enu1}
         \item the graph $(G'\setminus G'[\Sigma]\mid G'[A],G'[\Sigma_1])$ is $e^{-\Theta(n^{1/2-\alpha})}$-affected by $\parity_{G'}(\mathcal{I})$.
     \end{enumerate} 
     In particular, since $\mathcal{I}^{(\ptrU-1)}\cup \{S_1\}$ is a subset of $\mathcal{I}$, we can conclude from \labelcref{claim:E1analysis_enu1} that the parities $(\parity_{G'}(\mathcal{I}^{(\ptrU-1)}),\parity_{G'}(S_1)\mid G'[A],G'[\Sigma_1])$ are $e^{-\Theta(n^{1/2-\alpha})}$-uniform.
     Therefore, 
     \[(\parity_{G'}(S_1)\mid \parity_{G'}(\mathcal{I}^{(\ptrU-1)}),G'[A],G'[\Sigma_1])=(\parity_{G'}(S_1)\mid \Gamma')\]
     is also $e^{-\Theta(n^{1/2-\alpha})}$-uniform.
    \end{proof*}
    
    From the previous discussions, we know that if we further condition on  $\parity_G(S_1)=1$, then $S_2$ and $S_3(w)$ for all $w\in W_j\setminus R^{(\ptrU-1)}$ are well-defined and determined. In this case, we show that the distribution of the induced graph on $S_2$ and the parities of $S_3(w)$ for $w\in W_j\setminus R^{\ptrU-1}$ are close to what one would expect for a partially revealed graph. 
    \begin{claim}\label{claim:G'-close-to-uniform}
        We have the following:
        \begin{enumerate}
            \item the parities $\lpr{\lpr{\parity_{G'}(S_3(w))}_{w\in W_j\setminus R^{(\ptrU-1)}}\mid \parity_{G'}(S_1), \Gamma',\cE_2}$ are $e^{-\Theta(n^{1/2-\alpha})}$-uniform, and
            \item the graph $\lpr{G'[S_2]\mid \parity_{G'}(S_1), \Gamma',\lpr{\parity_{G'}(S_3(w))}_{w\in W_j\setminus R^{(\ptrU-1)}},\cE_2}$ is $e^{-\Theta(n^{1/2-\alpha})}$-close to the graph $(G'[S_2]\mid G'[A],G'[\Sigma_1],\cE_2)$.
        \end{enumerate}
    \end{claim}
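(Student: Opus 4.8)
The plan is to re-run the argument in the proof of \cref{claim:parS1-close-to-uniform} with the family $\mathcal I$ of \cref{lem:UWlayered} now also recording the parities $\parity_{G'}(S_3(w))$ and the edges $G'[S_2]$ queried in round $\ptrU$; both assertions then drop out by elementary conditioning on an $e^{-\Theta(n^{1/2-\alpha})}$-uniform tuple of bits. Since $G'$ is independent of the process run on $G$, conditioning on $\cE_2$ only pins down $R^{(\ptrU-1)}$, $j$, and the sets $S_1,S_2,S_3(w)$ ($w\in W_j\setminus R^{(\ptrU-1)}$) queried in round $\ptrU$, and otherwise leaves the law of $G'$ unchanged. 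The bookkeeping identity I would record first is that, under $\cE_2$ (where round $\ptrU$ adds the star $S_1$ to $\mathcal I_U^{(\ptrU)}$ and the stars $S_3(w)$ to $\mathcal I_W^{(\ptrU)}$), the family $\mathcal I=\mathcal J_A\cup\mathcal I_U^{(\ptrU)}\cup\mathcal I_W^{(\ptrU)}\cup\lcr{\binom{V}{2}}$ of \cref{lem:UWlayered} is
\[\mathcal I=\mathcal I^{(\ptrU-1)}\cup\{S_1\}\cup\{S_3(w):w\in W_j\setminus R^{(\ptrU-1)}\}.\]
(Here I take $\overline u_\ptrU\in U$; the case $\overline u_\ptrU\in A$ is analogous and only simpler, since then $\parity_{G'}(S_1)$ is already a function of $\Gamma'$ by the reasoning of \cref{obs:IA}.)

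With this identity, I would repeat the chain of the proof of \cref{claim:parS1-close-to-uniform} verbatim: by \cref{lem:UWlayered}, $\mathcal I|_\Sigma$ is $\Theta(n^{1/2-\alpha})$-transformed; since $\Sigma$ is disjoint from $\binom{A}{2}\cup\Sigma_1$, the graph $(G'[\Sigma]\mid G'[A],G'[\Sigma_1])$ is $p$-random, so by \cref{cor:transformed} the parities $(\parity_{G'}(\mathcal I|_\Sigma)\mid G'[A],G'[\Sigma_1])$ are $e^{-\Theta(n^{1/2-\alpha})}$-uniform; and then \cref{lem:Y-epsUniform} applied to $(G'\mid G'[A],G'[\Sigma_1])$ with $\vec X=\binom{V}{2}\setminus\Sigma$ and $\vec X'=\Sigma$, together with $|\mathcal I|=\mathrm{poly}(n)$, gives both (a) $(\parity_{G'}(\mathcal I)\mid G'[A],G'[\Sigma_1])$ is $e^{-\Theta(n^{1/2-\alpha})}$-uniform, and (b) $(G'\setminus G'[\Sigma]\mid G'[A],G'[\Sigma_1])$ is $e^{-\Theta(n^{1/2-\alpha})}$-affected by $\parity_{G'}(\mathcal I)$.

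For part (1), I would write $\parity_{G'}(\mathcal I)=\big(\parity_{G'}(\mathcal I^{(\ptrU-1)}),\,\parity_{G'}(S_1),\,(\parity_{G'}(S_3(w)))_{w}\big)$ and recall that $\Gamma'=\big(\parity_{G'}(\mathcal I^{(\ptrU-1)}),G'[A],G'[\Sigma_1]\big)$ with $\binom{V}{2}\in\mathcal I^{(\ptrU-1)}$. Conditioning the $e^{-\Theta(n^{1/2-\alpha})}$-uniform bit tuple from (a) on the block $\big(\parity_{G'}(\mathcal I^{(\ptrU-1)}),\parity_{G'}(S_1)\big)$ — a conditional of an $\eps$-uniform distribution on a block of coordinates is $2\eps$-uniform — shows $\big((\parity_{G'}(S_3(w)))_{w}\mid\parity_{G'}(S_1),\Gamma'\big)$ is $e^{-\Theta(n^{1/2-\alpha})}$-uniform, and appending $\cE_2$ changes nothing. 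For part (2), I would first check that $S_2=S(\overline u_\ptrU,W_j\setminus R^{(\ptrU-1)})\subseteq\binom{V\setminus R^{(\ptrU-1)}}{2}$ and $S_2\cap\big(\Sigma\cup\binom{A}{2}\big)=\varnothing$ (because $\overline u_\ptrU\notin W$ and $W_j$ is disjoint from $W_{\#}$), hence $S_2\subseteq\binom{V}{2}\setminus\big(\Sigma\cup\Sigma_1\cup\binom{A}{2}\big)$; in particular $G'[S_2]$ is a coordinate-block of $G'\setminus G'[\Sigma]$ and $(G'[S_2]\mid G'[A],G'[\Sigma_1])$ is $p$-random. Restricting (b) to the $S_2$-coordinates ($\eps$-closeness passes to marginals) then gives that $G'[S_2]$ conditioned on $\parity_{G'}(\mathcal I)$ together with $G'[A],G'[\Sigma_1]$ — equivalently, conditioned on $\parity_{G'}(S_1)$, $\Gamma'$, and $(\parity_{G'}(S_3(w)))_{w}$ — is $e^{-\Theta(n^{1/2-\alpha})}$-close to $(G'[S_2]\mid G'[A],G'[\Sigma_1])$, and again appending $\cE_2$ changes nothing; this is part (2).

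I expect the only real work to be the bookkeeping: verifying the displayed identity for $\mathcal I$ under $\cE_2$, checking that $S_2$ avoids $\Sigma$ and $\Sigma_1$, and noting that the stars $S_3(w)$ added to $\mathcal I_W^{(\ptrU)}$ restrict on $\Sigma$ to exactly the new stars analyzed in \cref{lem:UWlayered}. Once that is in place, the claim is an immediate consequence of \cref{lem:UWlayered,cor:transformed,lem:Y-epsUniform}, applied exactly as in \cref{claim:parS1-close-to-uniform}, with no new probabilistic estimate required.
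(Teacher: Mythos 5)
Your proof is correct and follows essentially the same route as the paper's: identify that after round $\ptrU$ the family $\mathcal I$ of \cref{lem:UWlayered} contains $\mathcal I^{(\ptrU-1)}$ together with $S_1$ and the new stars $S_3(w)$, apply \cref{lem:UWlayered} and \cref{cor:transformed} to get $\eps$-uniformity of $\parity_{G'}(\mathcal I)$ given $G'[A],G'[\Sigma_1]$, then invoke \cref{lem:Y-epsUniform} and condition (respectively restrict to the $S_2$-marginal) exactly as in \cref{claim:parS1-close-to-uniform}. The paper leaves implicit the two elementary facts you spell out (a block-conditional of an $\eps$-uniform tuple is $2\eps$-uniform, and $\eps$-affectedness passes to marginals), but otherwise the argument is the same.
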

    \begin{proof*}
    In this proof, we always condition on $\cE_2$, and we will drop this conditioning from our notations for convenience. 
    Similar to the proof of \cref{claim:parS1-close-to-uniform}, we know that the following holds:
     \begin{enumerate}[label=(\alph*)]
         \item the parities $(\parity_{G'}(\mathcal{I})\mid G'[A],G'[\Sigma_1])$ are $e^{-\Theta(n^{1/2-\alpha})}$-uniform,  and\label{claim:E2analysis_enu1}
         \item the graph $(G'\setminus G'[\Sigma]\mid G'[A],G'[\Sigma_1])$ is $e^{-\Theta(n^{1/2-\alpha})}$-affected by $\parity_{G'}(\mathcal{I})$.\label{claim:E2analysis_enu2}
     \end{enumerate} 
    If $\ptrU\leq \abs{A}$, we know that $\parity_{G'}(S_1)$ is determined by $\Gamma'$, and $\mathcal{I}^{(\ptrU-1)}\cup \{S_3(w)\mid w\in W_j\setminus R^{(\ptrU-1)}\}$ is a subset of $\mathcal{I}$. If $\ptrU> \abs{A}$, we know that $\mathcal{I}^{(\ptrU-1)}\cup\{S_1\}\cup \{S_3(w)\mid w\in W_j\setminus R^{(\ptrU-1)}\}$ is a subset of $\mathcal{I}$. In either case, we can always conclude that
    \begin{align*}
        &\lpr{\lpr{\parity_{G'}(S_3(w))}_{w\in W_j\setminus R^{(\ptrU-1)}}\mid \parity_{G'}(S_1), \Gamma'}\\
        &=\lpr{\lpr{\parity_{G'}(S_3(w))}_{w\in W_j\setminus R^{(\ptrU-1)}}\mid \parity_{G'}(S_1), \parity_{G'}(\mathcal{I}^{(\ptrU-1)}),G'[A],G'[\Sigma_1]}
    \end{align*}
    is $e^{-\Theta(n^{1/2-\alpha})}$-uniform from \labelcref{claim:E2analysis_enu1}.

    Similarly, from \labelcref{claim:E2analysis_enu2}, we know that the graph $(G'\setminus G'[\Sigma]\mid G'[A],G'[\Sigma_1])$ is $e^{-\Theta(n^{1/2-\alpha})}$-close to the graph $\lpr{G'\setminus G'[\Sigma]\mid \parity_{G'}(S_1), \Gamma',\lpr{\parity_{G'}(S_3(w))}_{w\in W_j\setminus R^{(\ptrU-1)}}}$. Since $S_2$ is a subset of $\binom{V}{2}\setminus \Sigma$, the second part of the claim follows. 
    \end{proof*}
    Now, we can estimate the success probability of the $\ptrU$-th round using \cref{claim:parS1-close-to-uniform,claim:G'-close-to-uniform}.
    \begin{claim}\label{claim:UWconclusion}
        If $\ptrU>\abs{A}$, i.e., $\overline u_{\ptrU}\in U$, then $\parity_G (\overline u_\ptrU,V\setminus R^{(\ptrU-1)}) = 0$ (or $1$) with probability $\frac{1}{2}\pm e^{-\Theta(n^{1/2-\alpha})}$ when it is revealed at the $\ptrU$-th round of the $(U,W)$-removal. 
        
        For any $\ptrU=1,\dots,\abs{A}+\abs{U}$, if $\parity_G (\overline u_\ptrU,V\setminus R^{(\ptrU-1)}) = 1$, then there exists a desired vertex $w\in W_j\setminus R^{(\ptrU-1)}$ with probability $1-e^{-\Theta\lpr{\abs{W_j\setminus R^{(\ptrU-1)}}}}$ such that this round succeeds, where $j = \ptrW\pmod{s}$ is defined in \cref{def:lrremoval}.
    \end{claim}
    \begin{proof*}
    We first show the first part of the lemma. We know from \cref{claim:parS1-close-to-uniform} that if $\overline u_{\ptrU}\in U$, then
    \[(\parity_G(S_1)\mid \Gamma=\Gamma_0)=(\parity_{G'}(S_1)\mid \Gamma'=\Gamma_0,\cE_1)\]
    is $e^{-\Theta(n^{1/2-\alpha})}$-uniform. Therefore, in the $\ptrU$-th round, $\parity_G(S_1)=\parity_G (\overline u_\ptrU,V\setminus R^{(\ptrU-1)}) = 1$ (or $0$) with probability $1/2\pm e^{-\Theta(n^{1/2-\alpha})}$.
    
    Now, we show the second part of the statement. Fix $\ptrU$ and suppose $\cE_2$ is true, then we revealed $G[S_2]$ and $\parity_G(S_3(w))_{w\in W_j\setminus R^{(\ptrU-1)}}$. We know that by \cref{claim:G'-close-to-uniform},
    \begin{align*}
        & \lpr{\lpr{\parity_{G}(S_3(w))}_{w\in W_j\setminus R^{(\ptrU-1)}}\mid \parity_G(S_1)=1, \Gamma=\Gamma_0}\\
         &= \lpr{\lpr{\parity_{G'}(S_3(w))}_{w\in W_j\setminus R^{(\ptrU-1)}}\mid \parity_{G'}(S_1)=1, \Gamma'=\Gamma_0,\cE_2}
    \end{align*}
    is $e^{-\Theta(n^{1/2-\alpha})}$-uniform. Therefore, by the Chernoff bound, with probability at least $1-e^{-\Theta\lpr{\abs{W_j\setminus R^{(\ptrU-1)}}}}$, there are at least $\Theta\lpr{\abs{W_j\setminus R^{(\ptrU-1)}}}$ vertices $w\in W_j\setminus R^{(\ptrU-1)}$ such that $\parity_{G'}(S_3(w))=0$. We denote the set of all such $w$ by $\mho$. 

    From \cref{claim:G'-close-to-uniform}, we know that the graph $\lpr{G'[S_2]\mid \parity_{G'}(S_1), \Gamma',\lpr{\parity_{G'}(S_3(w))}_{w\in W_j\setminus R^{(\ptrU-1)}}}$ is $e^{-\Theta(n^{1/2-\alpha})}$-close to $(G'[S_2]\mid G'[A],G'[\Sigma_1])$. Since $S_2\cap\lpr{\binom{A}{2}\cup\Sigma_1}=\varnothing$, we know that the graph $(G'[S_2]\mid G'[A],G'[\Sigma_1])$ is $p$-random. Therefore, among all the $w\in \mho$, the probability that there exists one of them satisfying that $\overline u_{\ptrU} w$ is an edge in $G'$ is at least 
    \[\lpr{1-e^{-\Theta(n^{1/2-\alpha})}}\lpr{1-\lpr{1-p}^{|\mho|}}=1-e^{-\Theta\lpr{|\mho|}}\]
    since $\abs{\mho}=O(n^{1/2-\alpha})$.
    Similar as before, the same also holds for $G$ in place of $G'$. Therefore, we know that with probability at least $1-e^{-\Theta\lpr{\abs{W_j\setminus R^{(\ptrU-1)}}}}$ the set $\mho$ has size $\Theta\lpr{\abs{W_j\setminus R^{(\ptrU-1)}}}$, and we can find a desired $w\in\mho$ with probability at least $1-e^{-\Theta\lpr{|\mho|}}=1-e^{-\Theta\lpr{\abs{W_j\setminus R^{(\ptrU-1)}}}}$ such that this round succeeds.
    \end{proof*}

    Finally, we can estimate the probability that the $(U,W)$-removal succeeds. Define for fixed $\ptrU\le |A|+|U|$ the event $\cE_{\text{bad},\,\ptrU}$ as the event where the number of $\ptrU'\leq \ptrU$ such that $\parity_G (\overline u_{\ptrU'},V\setminus R^{(\ptrU'-1)})=1$ is not in $\left[\frac{\abs{\ptrU}}{2}-n^{3/4},\frac{\abs{\ptrU}}{2}+n^{3/4}\right]$. Let $\cE_{\text{bad}} := \bigcup_{\ptrU = 1}^{|A|+|U|}\cE_{\text{bad},\,\ptrU}$. For a fixed $\ptrU$, since the expected number of $\ptrU'\leq \ptrU$ such that $\parity_G (\overline u_{\ptrU'},V\setminus R^{(\ptrU'-1)})=1$ is $(\frac{1}{2}\pm e^{-\Theta(n^{1/2-\alpha})})\abs{\ptrU}=\frac{\abs{\ptrU}}{2}\pm o(n^{3/4})$, we have $\P[\cE_{\text{bad},\,\ptrU}]\le e^{-\Theta(n^{1/2})}<e^{-\Theta(n^{1/2-\alpha})}$ by the Chernoff bound. Taking a union bound over all $\ptrU$, we have 
    \[\P[\cE_{\text{bad}}]\le ne^{-\Theta(n^{1/2-\alpha})}=e^{-\Theta(n^{1/2-\alpha})}.\]

    Note that when $\cE_{\text{bad}}$ does not hold, we have 
    \begin{align*}
        \abs{W_j\setminus R^{(\ptrU-1)}}&\geq \abs{W_j}-\frac{1}{s}\abs{R^{(\ptrU-1)}\cap W}-1\geq \frac{1}{s}\abs{W}-\frac{1}{s}\abs{R^{(\ptrU-1)}\cap W}-2
    \end{align*}
    Furthermore, we have $\abs{R^{(\ptrU-1)}\cap W}\leq \frac{\abs{\ptrU}}{2}+n^{3/4}$ since $W$ is used only in the $\ptrU'$-th round of the removal procedure when $\parity_G (\overline u_{\ptrU'},V\setminus R^{(\ptrU'-1)})=1$. Thus we have $\abs{W}-\abs{R^{(\ptrU-1)}\cap W}\geq \abs{W}-\frac{\abs{\ptrU}}{2}-n^{3/4}=\Theta(n)$ and hence $\abs{W_j\setminus R^{(\ptrU-1)}}=\Theta(n^{1/2-\alpha})$.

    To conclude, conditioning on the event of $\cE_{\text{bad}}$ failing, the $(U,W)$-removal process succeeds with probability at least $1-(\abs{A}+\abs{U})e^{-\Theta(n^{1/2-\alpha})}=1-e^{-\Theta(n^{1/2-\alpha})}$. Since with probability $1-e^{-\Theta(n^{1/2-\alpha})}$ the event $\cE_{\text{bad}}$ fails, the process succeeds with probability at least $1-e^{-\Theta(n^{1/2-\alpha})}$. Moreover, when  $\cE_{\text{bad}}$ fails, we have  $\abs{R^{(\abs{A}+\abs{U})}\cap W}\leq (\abs{A}+\abs{U})/2+n^{3/4}=(1\pm o(1))\frac{n}{4}$ after the final round. Therefore, we have $\abs{V_W}=(1\pm o(1))\frac{n}{4}$ in this case and this completes the proof.
    \end{proof}

\section{Proof of \cref{thm:recurrence}}\label{sec:algorithm}

As mentioned in the introduction, we will run the $(U, W)$-removal twice with different inputs and obtain two disjoint output sets $V_W$ if both processes are successful. If either one of the remaining subgraphs is even-degenerate, then the input graph $G$ is also even-degenerate. We will get a probability boost in this way by showing that the two remaining subgraphs are close to being independent.

\subsection{Running the $(U,W)$-removal twice}\label{subsec:intro}
Given a graph $G$ with the set of revealed vertices $A\subseteq V$, 
we fix a partition of $V\setminus A$ into two balanced parts $B,C$. We fix orders of vertices in $A,B,C$ to form sequences $A=\{a_1,\dots,a_{\abs{A}}\}, B=\{b_1,\dots,b_{\abs{B}}\}, C=\{c_1,\dots,c_{\abs{C}}\}$. 

Pick $s=\Theta(n^{1/2+\alpha})$. We further partition both $B$ and $C$ into $s+1$ balanced sized blocks $B=B_\#\sqcup  B_1\sqcup \dots\sqcup B_s$ and $C=C_\#\sqcup C_1\sqcup \dots\sqcup C_s$, such that vertices in these blocks are \textit{decreasing} in vertex order. That is, we pick $r_1,\dots,r_{s+1}$ and $r'_1,\dots,r'_{s+1}$ such that
\begin{enumerate}
    \item $r_{s+1}=|B|$ and $r'_{s+1}=|C|$,
    \item $r_1,r_2-r_1,\dots, r_{s+1}-r_{s}$ pairwise differ by at most 1,
    \item $r_1',r'_2-r'_1,\dots, r'_{s+1}-r'_{s}$ pairwise differ by at most 1.
\end{enumerate}
We then set
\begin{align*}
&B_{\#}=\{b_{r_{s}+1},\dots, b_{r_{s+1}}\},  B_1=\{b_{r_{s-1}+1},\dots, b_{r_{s}}\}, \dots,  B_s=\{b_1,\dots,b_{r_1}\},\\
&C_{\#}=\{c_{r'_{s}+1},\dots, c_{r'_{s+1}}\},  C_1=\{c_{r'_{s-1}+1},\dots, c_{r'_{s}}\}, \dots, C_s=\{c_1,\dots,c_{r'_1}\}.
\end{align*}
Note that $B_{\#}, B_1, \dots, B_s, $ and $C_{\#}, C_1, \dots, C_s$ all have sizes $\Theta(n^{1/2-\alpha})$.

We run the $(U, W)$-removal (\cref{def:lrremoval}) twice on the graph $G$ as follows. First, we run the $(U, W)$-removal with inputs $U=\{b_1,\dots,b_{|B|}\}$ and  $W= C_{\#}\sqcup C_1\sqcup\dots\sqcup C_s$, and call this process the \emph{$(B,C)$-removal}. Next, we run the $(U, W)$-removal with inputs $U=\{c_1,\dots,c_{|C|}\}$,  $W= B_{\#}\sqcup B_1\sqcup\dots\sqcup B_s$, and call this process the \emph{$(C,B)$-removal}.

By \cref{prop:process-success-prob}, both $(B,C)$-removal and $(C,B)$-removal fail with probability at most $e^{-\Theta(n^{1/2-\alpha})}$. Suppose both processes succeed. Then we let $V_C\subseteq C$ and $R_{BC}$ denote the outputs of $(B,C)$-removal; let $V_B\subseteq B$ and $R_{CB}$ denote the outputs of $(C,B)$-removal. Furthermore, we define $i_B=\abs{B}-\abs{V_B}$ to be the number of vertices removed from $B$ in the $(C,B)$-removal, and define $i_C=\abs{C}-\abs{V_C}$ to be the number of vertices removed from $C$ in the $(B,C)$-removal. By \cref{prop:process-success-prob} again, with probability $1-e^{-\Theta(n^{1/2-\alpha})}$, we have $i_B,i_C\in [\frac{n}{4}-o(n),\frac{n}{4}+o(n)]$. Let $i_B'=i_B\pmod s$, $i_C'=i_C\pmod s$ (recall the convention that $s\pmod s=s$).
Now, we set $\eta=\Theta(n^{1/2-\alpha})$ small enough such that the sets 
\[D_B^P:=\cup_{i=1}^\eta B_i,\quad D_B^Q:=\cup_{i=i_B'-\eta+1}^{i_B'} B_i,\quad D_C^P:=\cup_{i=1}^\eta C_i,\quad D_C^Q:=\cup_{i=i_C'-\eta+1}^{i_C'} C_i\]
all have size $\le 0.01 n^{1-2\alpha}$. Set
\[
A_B=(B_{\#}\cup D_B^P\cup D_B^Q)\cap V_B,\qquad A_C=(C_{\#}\cup D_C^P\cup D_C^Q)\cap V_C.
\]
Note that $\abs{A_B}\leq 0.1n^{1-2\alpha}\leq \abs{V_B}^{1-2\alpha}$ and $\abs{A_C}\leq 0.1n^{1-2\alpha}\leq \abs{V_C}^{1-2\alpha}$ both hold whenever $\abs{V_B},\abs{V_C}\in[\frac{n}{4}-o(n),\frac{n}{4}+o(n)]$. 
We will show that after revealing all the information in both processes and also some more information, the graphs $G[V_B]$ and $G[V_C]$ are $e^{-\Theta(n^{1/2-\alpha})}$-close to two independent partially revealed $p$-graphs with revealed parts $A_B$ and $A_C$ respectively.
See \cref{fig:enter-label} for an illustration of $B_{\#},B_1,\dots,B_s,D_B^P, D_B^Q,A_B$.

\subsection{Revealed information after running both processes}\label{subsec:revealed-info}
In this subsection, we summarize all the information we revealed due to
\begin{enumerate}
    \item $G$ being a partially revealed graph,

    \item  $(B,C)$-removal,

    \item  $(C,B)$-removal.
\end{enumerate}

First, we recall that $G$ being a partially revealed graph means that $G[A],\parity(\cJ_A),\parity(V)$ are revealed. 

Consider any instance of a \textit{successful} $(U,W)$-removal (\cref{def:lrremoval}), so that the procedure ends at round $\ptrU_0=\abs{A}+\abs{U}$.  
Let
\[
[\ptrU_0]_{0}:=\{\ptrU\in[\ptrU_0]\mid \parity(\overline u_\ptrU, V\setminus R^{(\ptrU-1)})=0\}
\quad\text{and}\quad
[\ptrU_0]_{1}:=\{\ptrU\in[\ptrU_0]\mid \parity(\overline u_\ptrU, V\setminus R^{(\ptrU-1)})=1\}
\]
denote the sets of indices for which we fall into the first and second cases, respectively, in step (2) of the  $(U,W)$-removal (\cref{def:lrremoval}).  
By the end of the procedure, we have the following potential edge sets:
\begin{itemize}
    \item $\mathcal{I}_A^{(\ptrU_0)}\sqcup\mathcal{I}_U^{(\ptrU_0)}:=\{S(\overline u_\ptrU, V\setminus R^{(p-1)})\mid \ptrU\in \{1,\dots, \ptrU_0\}\}$, where
    $\mathcal{I}_A^{(\ptrU_0)}$ consists of those stars centered at $\overline u_p\in A$
    and $\mathcal{I}_U^{(\ptrU_0)}$ consists of those stars centered at $\overline u_\ptrU\in U$;
    \item $\mathcal I_W^{(\ptrU_0)}:=\bigcup_{\ptrU\in [\ptrU_0]_{1}}\{S(w, V\setminus R^{(\ptrU-1)})\mid w\in W_j\}$, where for every $\ptrU\in [\ptrU_0]_{1}$ we have
    $j=\ptrW\pmod s$, and $\ptrW-1$ is the number of vertices removed from $W$
    in the first $\ptrU-1$ rounds;
    \item $\mathcal I_e^{(\ptrU_0)}:=\{S(\overline u_\ptrU, W_j\setminus R^{(\ptrU-1)}) \mid \ptrU\in [\ptrU_0]_{1}\}$, where for every $\ptrU\in [\ptrU_0]_{1}$ we define $j$  as above.
\end{itemize}
Throughout the $(U,W)$-removal, we revealed
$\parity(\mathcal I_A^{(\ptrU_0)})$,
$\parity(\mathcal I_U^{(\ptrU_0)})$,
$\parity(\mathcal I_W^{(\ptrU_0)})$,
and $G[\mathcal I_e^{(\ptrU_0)}]$.

Since we run the $(U,W)$-removal twice, we need to show that after combining the knowledge of $\parity(\cI_A^{(\ptrU_0)}),\parity(\cI_U^{(\ptrU_0)}),\parity(\cI_W^{(\ptrU_0)})$, $G[\mathcal I_e^{(\ptrU_0)}]$ in \textit{both} processes, this information still has little influence on the randomness of $G[V_B]$ and $G[V_C]$. 

\begin{definition}\label{def:info-removal}
    
Let $\cI_A^{BC}$, $\mathcal I_B^{P}$, $\mathcal I_C^{Q}$, $\mathcal I_e^{BC}$ be the sets $\cI_A^{(\ptrU_0)}$, $\cI_U^{(\ptrU_0)}$, $\cI_W^{(\ptrU_0)}$, $\cI_e^{(\ptrU_0)}$ in the $(B,C)$-removal. Let $\cI_A^{CB}$, $\cI_C^P$, $\cI_B^Q$, $\cI_e^{CB}$ be the sets $\cI_A^{(\ptrU_0)}$, $\cI_U^{(\ptrU_0)}$, $\cI_W^{(\ptrU_0)}$, $\cI_e^{(\ptrU_0)}$ in the $(C,B)$-removal.
\end{definition}

We will further define some vertex sets so that the proof is easier to write. 

\begin{definition}\label{def:BandTB}
    Assume that $R_{CB}\cap B = \{\beta_1,\dots,\beta_{i_B}\}$ and $R_{BC}\cap C =\{\gamma_1,\dots,\gamma_{i_C}\}$, where $\beta_1,\dots,\beta_{i_B}$ and $\gamma_1,\dots,\gamma_{i_C}$ are in the same order as in $R_{CB}$ and $R_{BC}$ respectively. We set 
\[B^P:=\{\beta_1,\dots,\beta_\eta\}\subseteq D_B^P,\quad B^Q:=\{\beta_{i_B-\eta+1},\dots,\beta_{i_B}\}\subseteq D_B^Q,\]
\[C^P:=\{\gamma_1,\dots,\gamma_\eta\}\subseteq D_C^P,\quad C^Q:=\{\gamma_{i_C-\eta+1},\dots,\gamma_{i_C}\}\subseteq D_C^Q.\]
For convenience and clarity, we also define the sets 
\[T_B^P := (V_B\cup B^Q)\setminus (D_B^P\cup B_{\#}), \quad T_B^Q := V_B\setminus A_B=V_B\setminus(B_{\#}\cup D_B^P\cup  D_B^Q), \]
\[T_C^P:= (V_B\cup C^Q)\setminus (D_C^P\cup C_{\#}), \quad T_C^Q:= V_C\setminus A_C=V_C\setminus(C_{\#}\cup D_C^P\cup  D_C^Q).\]
We may assume without loss of generality that $n$ is large enough so that $i_B-\eta+1>2s,i_C-\eta+1>2s$, and hence $B^P\cap B^Q=\varnothing,C^P\cap C^Q=\varnothing$. 
See \cref{fig:enter-label} for an illustration of $B^P,B^Q,T_B^P,T_B^Q$.
\end{definition}

\begin{figure}
    \centering
    \scalebox{1}{
\begin{tikzpicture}[scale=0.6]
\definecolor{myred}{RGB}{228,26,28}
\definecolor{myblue}{RGB}{55,126,184}
\definecolor{mygreen}{RGB}{77,175,74}
\definecolor{myorange}{RGB}{255,127,0}
\definecolor{mypurple}{RGB}{152,78,163}
\definecolor{mybrown}{RGB}{166,86,40}
\definecolor{mypink}{RGB}{247,129,191}
\definecolor{mygray}{RGB}{153,153,153}
\definecolor{mycyan}{RGB}{0,191,196}
\definecolor{myyellow}{RGB}{255,255,51}
    \draw[rounded corners=8pt,blue,line width=2pt] 
        (0-1,4-1)--(0-1,28+1)--(18+1,28+1)--(18+1,4-1)--cycle;
        
    \foreach \j in {5,7,9,11,13,15,17,19,21,23,25,27}{
        \draw[black,line width=2pt] 
            (0-1,\j)--(18+1,\j);
    }

    \draw[rounded corners=8pt,black,line width=2pt,fill=red,fill opacity=0.2] 
        (0-1,32-1)--(0-1,32+1)--(18+1,32+1)--(18+1,32-1)--cycle;

    \draw[blue,line width=2pt] 
        (9,29)--(9,11)--(6.2,11)--(6.2,3);

    \draw[rounded corners=8pt,mypurple,line width=2pt] 
        (0-1.5,21)--(0-1.5,29)--(19.8,29)--(19.8,21)--cycle;
        
    \draw[rounded corners=8pt,mypurple,line width=2pt] 
        (0-1.5,11)--(0-1.5,19)--(19.8,19)--(19.8,11)--cycle;
        
        
    
    \draw[draw=none,line width=2pt,fill=red,fill opacity=0.2] 
        (9.2,28.8)--(9.2,21.2)--(18.8,21.2)--(18.8,28.8)--cycle;
        
    \draw[draw=none,line width=2pt,fill=red,fill opacity=0.2]  
         (9.2,18.8)--(9.2,11.2)--(18.8,11.2)--(18.8,18.8)--cycle;
    
        
    
    \draw[draw=none,line width=2pt,fill=gray,fill opacity=0.2] 
        (-0.8,21.2)--(-0.8,28.8)--(1.2,28.8)--(1.2,21.2)--cycle;
        
    \draw[draw=none,line width=2pt,fill=gray,fill opacity=0.2] 
        (8-1.6,11.2)--(8-1.6,18.8)--(8.8,18.8)--(8.8,11.2)--cycle;

    \draw[draw=none,line width=2pt,fill=mycyan,fill opacity=0.2] 
         (9.2,20.8)--(9.2,19.2)--(18.8,19.2)--(18.8,20.8)--cycle;
         
    \draw[draw=none,line width=2pt,fill=mycyan,fill opacity=0.2] 
         (6.4,10.8)--(6.4,3.2)--(18.8,3.2)--(18.8,10.8)--cycle;

    \draw[rounded corners=8pt,mybrown,line width=2pt] 
        (5.8,3-0.4)--(5.8,19.4)--(8.6,19.4)--(8.6,21.4)--(19.4,21.4)--(19.4,3-0.4)--cycle;

    \draw[rounded corners=8pt,red,line width=2pt] 
        (18+1,32)--(18+2.2,32)--(18+2.2,16)--(18.8,16);

    \draw[rounded corners=8pt,red,line width=2pt] 
        (18.8,24)--(18+3,24);
        
    \draw[rounded corners=8pt,mycyan,line width=2pt] 
        (18.8,20)--(18+2.6,20)--(18+2.6,6)--(18.8,6);
        
    \draw[rounded corners=8pt,mypurple,line width=2pt] 
        (19.8,15)--(18+3,15);
    \draw[rounded corners=8pt,mypurple,line width=2pt] 
        (19.8,25)--(18+3,25);
    \draw[rounded corners=8pt,mycyan,line width=2pt] 
        (20.6,13)--(18+3,13);
    \draw[rounded corners=8pt,mybrown,line width=2pt] 
        (19.4,10)--(18+3,10);
    \foreach \i in {5,13} {
    \foreach \j in {4,6,8,12,14,18,22,26,28} {
        \node at (\i,\j) {$\dots$};
    }
    }
    
        \node at (9,32) {$\dots$};

        \node[anchor=west] at (0,28) {$\beta_1$};
        \node[anchor=west] at (0,26) {$\beta_2$};
        \node[anchor=west] at (0,22) {$\beta_\eta$};
        
        \node[anchor=west] at (0,8) {$\beta_{s-2}$};
        \node[anchor=west] at (0,6) {$\beta_{s-1}$};
        \node[anchor=west] at (0,4) {$\beta_s$};
        
        \node[anchor=west] at (2,28) {$\beta_{s+1}$};
        \node[anchor=west] at (2,26) {$\beta_{s+2}$};
        \node[anchor=west] at (2,22) {$\beta_{s+\eta}$};
        
        \node[anchor=west] at (2,8) {$\beta_{2s-2}$};
        \node[anchor=west] at (2,6) {$\beta_{2s-1}$};
        \node[anchor=west] at (2,4) {$\beta_{2s}$};
        
        \node[anchor=west] at (8-1.7,18-0.4) {$\beta_{i_B-\eta+1}$};
        \node[anchor=west] at (8-1.6,14-0.4) {$\beta_{i_B-1}$};
        \node[anchor=west] at (8-1.4,12) {$\beta_{i_B}$};
        
        \node[anchor=west] at (-3,28) {$B_1$};
        \node[anchor=west] at (-3,26) {$B_2$};
        \node[anchor=west] at (-2.8,24) {$\vdots$};
        \node[anchor=west] at (-3,22) {$B_\eta$};

        \node[anchor=west] at (-3,8) {$B_{s-2}$};
        \node[anchor=west] at (-3,6) {$B_{s-1}$};
        \node[anchor=west] at (-3,4) {$B_s$};
        \node[anchor=west] at (-3,32) {$B_{\#}$};

        \node at (3,2) {\textcolor{blue}{$F_0$}};
        \node at (13,2) {\textcolor{blue}{$F_1$}};

        \node at (4,16) {$\vdots$};
        \node at (4,20) {$\vdots$};
        \node at (4,24) {$\vdots$};
        \node at (4,10) {$\vdots$};
        
        \node at (14,16) {$\vdots$};
        \node at (14,20) {$\vdots$};
        \node at (14,24) {$\vdots$};
        \node at (14,10) {$\vdots$};

        \node[anchor=west] at (21,25) {\textcolor{mypurple}{$D_B^P$}};
        \node[anchor=west] at (21,15) {\textcolor{mypurple}{$D_B^Q$}};
        
        \node[anchor=west] at (21,24) {\textcolor{red}{$A_B$}};
        \node[anchor=west] at (21,10) {\textcolor{mybrown}{$T_B^P$}};
        \node[anchor=west] at (21,13) {\textcolor{mycyan}{$T_B^Q$}};
        
        \node[anchor=west] at (-0.3,24) {\textcolor{black}{$B^P$}};
        \node[anchor=west] at (7,16) {\textcolor{black}{$B^Q$}};

    \draw[black,line width=1.5pt]
    (-4+0.5,13)--(-4+0.5,17);
    \draw[black,line width=1.5pt]
    (-4.3+0.5,16.7)--(-4+0.5,17)--(-3.7+0.5,16.7);
        
        \node[anchor=west] at (-3.8+0.5,15) {$b_i$};
        \node[anchor=west] at (-5+0.5,12.5) {$i$ small};
        \node[anchor=west] at (-5+0.5,17.5) {$i$ large};
    \foreach \i in {0,2,8,10,16,18} {
    \foreach \j in {4,6,8,12,14,18,22,26,28} {
        \draw [fill] (\i,\j) circle (1.6pt);
    }
    }

    \foreach \i in {0,2,15,17}{
        \draw [fill] (\i,32) circle (1.6pt);        
    }
\end{tikzpicture}}
    \caption{An illustration of the sets defined in \cref{subsec:intro,def:BandTB}.
    The vertices $b_i$ with larger index $i$ in the ordering $b_1,\dots,b_{\abs{B}}$ are higher in this picture. Note that it might be the case that $B^Q\cap D_B^P\neq\varnothing$, and in this case we do not include the intersection in $T_B^P$. The sets $F_0,F_1$ will be defined later in \cref{subsec:comb-lemma-2}.}
    \label{fig:enter-label}
\end{figure}

The following $\Gamma$ consists of all the information we shall reveal in our proof, which in particular contains $G[A],\parity(\cJ_A),\parity(V)$ and those in \cref{def:info-removal}.

\begin{definition}\label{def:gamma}
    Let $\Gamma$ denote the collection of information that consists of
    \[
G[A],\parity(\cJ_A),\parity(V),\parity(\cI_A^{BC}),\parity(\cI_A^{CB}),\parity(\cI_B^P), \parity(\cI_B^Q), \parity(\cI_C^P), \parity(\cI_C^Q), G[\cI_e^{BC}], G[\cI_e^{CB}]
    \]
    and the following information (1)--(5):
    \begin{enumerate}
    \item $G[A_B]$ and $G[A_C]$,
    \item $\parity(\mathcal{J}_{A_B})$ and $\parity(\mathcal{J}_{A_C})$, where $\mathcal{J}_{A_B}=\{S(a,V_B)\mid a\in A_B\}$ and $\mathcal{J}_{A_C}=\{S(a,V_C)\mid a\in A_C\}$,
    \item $\parity(V_B)$ and $\parity(V_C)$,
    \item $G\setminus(G[B]\cup G[C])$, and
    \item $G[\Sigma_B]$ and $G[\Sigma_C]$, where 
\begin{align}\label{eq:sigma-B}
    \Sigma_B:=\binom{B}{2}\setminus\left(\binom{V_B}{2}\cup S(B^P,T_B^P)\cup S(B^Q,T_B^Q)\right),
\end{align}
\begin{align}\label{eq:sigma-C}
    \Sigma_C:=\binom{C}{2}\setminus\left(\binom{V_C}{2}\cup S(C^P,T_C^P)\cup S(C^Q,T_C^Q)\right).
\end{align}
\end{enumerate}
\end{definition}
\begin{remark}\label{rmk:gamma}
    Note that (1)--(3) in $\Gamma$ are exactly the information we shall reveal so that  $V_B$ and $V_C$ become partially revealed graphs. After revealing (4), the graphs $G[B],G[C]$ become independent. The last part is revealed so that later proofs will be easier to write.
    
    In hindsight, $\Gamma$ is defined as such so that in \cref{subsec:comb-lemma}, we can argue that the potential edge sets $S(B^P,T_B^P)\cup S(B^Q,T_B^Q)$  provide enough randomness in $G[V_B]$, and the potential edges in $S(C^P,T_C^P)\cup S(C^Q,T_C^Q)$ provide enough randomness in $G[V_C]$, to ``hide'' the partial information revealed about $G[V_B]$ and $G[V_C]$ during the two removals. We will further simplify $\Gamma$ in the next subsection.
\end{remark}

Suppose we reveal $\Gamma$. 
From (1)(4)(5) in $\Gamma$, we know that the only potential edges that remain unrevealed are in
\begin{align}\label{eq:unrevealed-edges}
    \binom{B}{2}\setminus \lpr{\Sigma_B\cup \binom{A_B}{2}}=\lpr{\binom{V_B}{2}\setminus \binom{A_B}{2}}\sqcup S(B^P,T_B^P)\sqcup S(B^Q,T_B^Q)
\end{align}
and 
\begin{align}
    \binom{C}{2}\setminus \lpr{\Sigma_C\cup \binom{A_C}{2}}=\lpr{\binom{V_C}{2}\setminus \binom{A_C}{2}}\sqcup S(C^P,T_C^P)\sqcup S(C^Q,T_C^Q),
\end{align}
but the other partial information listed in \cref{def:gamma} may affect their randomness. In the rest of the section, we will show that conditioning on everything in $\Gamma$, $G[V_B]$ and $G[V_C]$ still have enough randomness to be close to two independent partially-revealed $p$-graphs, with revealed parts $A_B$ and $A_C$ respectively.

One observation is that although the sequences of vertices $R_{BC},R_{CB}$ are chosen from a random process, they are indeed deterministic when conditioning on $\Gamma$, provided that both $(B,C)$-removal and $(C,B)$-removal succeed. Since all the vertex sets we defined previously are determined by $R_{BC},R_{CB}$, they are also deterministic when conditioned on $\Gamma$. This is crucial to simplifying our analysis in \cref{subsec:comb-lemma}.

\subsection{Analysis of the remaining graph $G[V_B], G[V_C]$}\label{subsec:analysis}

Let $\cE$ be the event that both $(B,C)$-removal and $(C,B)$-removal are successful, and $\abs{V_B},\abs{V_C}\in [\frac{n}{4}-o(n),\frac{n}{4}+o(n)]$. 
Conditioning on any fixed assignment $\Gamma=\Gamma_0$ such that $\cE$ holds, we define $G_B,G_C$ to be two independent partially revealed $p$-graphs on $V_B,V_C$, with revealed parts $A_B,A_C$ that agree with $G[V_B],G[V_C]$ respectively. We will show that $G[V_B]\cup G[V_C]$ is $e^{-\Theta(n^{1/2-\alpha})}$-close to $G_B\cup G_C$ when conditioning on $\Gamma=\Gamma_0$. Formally, we have the following proposition.

\begin{proposition}\label{prop:close-prg}
    Let $G=(V,E)$ be a partially revealed $p$-graph. Assume the random variables $\Gamma,\Gamma_0,\cE,G[V_B],G[V_C],G_B,G_C$ are defined as above so that $\Gamma_0\in \supp(\Gamma)$ and $\cE$ holds when conditioning on $\Gamma=\Gamma_0$. Then the graph $(G[V_B]\cup G[V_C]\mid\Gamma=\Gamma_0)$ is $e^{-\Theta(n^{1/2-\alpha})}$-close to $(G_B\cup G_C\mid\Gamma=\Gamma_0)$.
\end{proposition}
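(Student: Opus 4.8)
The plan is to reveal $\Gamma$ in a carefully chosen order and, at the step where the only remaining randomness is the potential edges in \eqref{eq:unrevealed-edges} and its $C$-counterpart, decouple the random vertex sets $R_{BC},R_{CB}$ from the revealed information exactly as in the proof of \cref{prop:process-success-prob}: sample an independent $p$-random graph $G'$ on $V$, let $\Gamma'$ be the analogue of $\Gamma$ with $G$ replaced by $G'$, and use the identity $(G\mid\Gamma=\Gamma_0)=(G'\mid\Gamma'=\Gamma_0,\Gamma=\Gamma_0)$, so that when conditioning on $\cE\cap\{\Gamma=\Gamma_0\}$ all the vertex sets $V_B,V_C,B^P,B^Q,T_B^P,T_B^Q$ etc.\ become deterministic. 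This reduces the claim to a statement purely about the $p$-random graph $G'$ with some linear (parity) conditions and some induced-subgraph conditions imposed. Since $G'[B]$ and $G'[C]$ are independent $p$-random graphs once we reveal $G'\setminus(G'[B]\cup G'[C])$ (item (4) of $\Gamma$), it suffices to treat $G'[B]$ and $G'[C]$ \emph{separately} — this is exactly where the factorization into $G_B\cup G_C$ comes from — and to show, say, that $(G'[V_B]\mid \Gamma=\Gamma_0)$ is $e^{-\Theta(n^{1/2-\alpha})}$-close to $(G_B\mid\Gamma=\Gamma_0)$, the latter being $p$-random on $\binom{V_B}{2}\setminus\binom{A_B}{2}$ conditioned only on $G'[A_B]$, the parities of $\deg(a,V_B)$ for $a\in A_B$, and $\parity(V_B)$.

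After fixing attention on the $B$-side, the remaining randomness lives on $\binom{V_B}{2}\setminus\binom{A_B}{2}$ together with $S(B^P,T_B^P)\sqcup S(B^Q,T_B^Q)$; the information that could affect it is: the parities of the stars in $\cI_A^{BC}\cup\cI_B^P\cup\cI_B^Q$ (from the $(B,C)$-removal, restricted to $\binom{B}{2}$, i.e.\ the parts of those stars lying inside $B$), the parities in $\cI_e^{BC}$ restricted to $\binom{B}{2}$ (which is empty, since those stars go to $C_j$), the parities from the $(C,B)$-removal that touch $B$ — namely $\parity(\cI_B^Q)$ and the edges of $\cI_e^{CB}$ inside $B$ — and the ``target'' parities $\parity(\cJ_{A_B})$ and $\parity(V_B)$. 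I would organize these into one collection $\mathcal I$ of potential-edge subsets of $\binom{B}{2}$, together with the distinguished target sets $\mathcal J_{A_B}\cup\{\binom{V_B}{2}\}$, and then argue, in the spirit of \cref{lem:UWlayered} and \cref{cor:transformed}, that the restriction of $\mathcal I$ to $\Sigma^* := S(B^P,T_B^P)\cup S(B^Q,T_B^Q)$ is $\Theta(n^{1/2-\alpha})$-transformed: each vertex in $B^P$ (resp.\ $B^Q$) contributes a nested family of stars into $T_B^P$ (resp.\ $T_B^Q$) coming from the $(C,B)$-removal rounds, exactly as the $Q'_t(w)$ in \cref{lem:UWlayered}, and the differencing trick plus a layered ordering (certificates $S(b,\cdot)$ of size $\Theta(n^{1/2-\alpha})=\eta$ or $s$) gives the $\eta$-layered structure. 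Given that, \cref{cor:transformed} makes $(\parity_{G'}(\mathcal I)\mid G'[A_B],G'[\Sigma_B])$ $e^{-\Theta(n^{1/2-\alpha})}$-uniform on $\Sigma^*$, and then \cref{lem:Y-epsUniform}(2) (with $\vec X=$ edges of $\binom{V_B}{2}\setminus\binom{A_B}{2}$ outside $\Sigma^*$, $\vec X'=$ edges in $\Sigma^*$) shows that revealing $\parity_{G'}(\mathcal I)$ only $e^{-\Theta(n^{1/2-\alpha})}$-affects $G'[V_B]$; the target parities $\parity(\cJ_{A_B}),\parity(V_B)$ are then handled via \cref{lem:evensumepsUniform} (the fix-parity variant), since those are precisely the constraints that define $G_B$.

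The main obstacle I anticipate is bookkeeping: unwinding exactly which potential-edge sets from \emph{both} removals, after intersecting with $\binom{B}{2}$ and removing the already-revealed parts $\Sigma_B$ and $\binom{A_B}{2}$, actually land inside $\Sigma^*=S(B^P,T_B^P)\cup S(B^Q,T_B^Q)$, and checking that their restrictions really do form a transformed-layered family of the required size. In particular one has to verify that the stars from the $(B,C)$-removal (centered at $B$-vertices, revealing $\parity(b,V\setminus R^{(\ptrU-1)})$) contribute, after restriction to $\Sigma^*$, only stars $S(b,T_B^P\setminus\cdots)$ or $S(b,T_B^Q\setminus\cdots)$ that are nested for each fixed $b$, so the same differencing-plus-certificate argument as in \cref{claim:QinWLayer} applies; and one must make sure $B^P,B^Q,T_B^P,T_B^Q$ were defined (with the $\eta$-many blocks $D_B^P,D_B^Q$, spaced $s$ apart) precisely so that the certificates have size $\ge\min\{s,\eta,|B_\#|\}=\Theta(n^{1/2-\alpha})$. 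Everything else — the $G'$ decoupling, the independence of the $B$ and $C$ sides after revealing item (4), and assembling the two $e^{-\Theta(n^{1/2-\alpha})}$-closeness statements into one (the closeness constants multiply, staying $e^{-\Theta(n^{1/2-\alpha})}$) — is routine given the lemmas in \cref{sec:prelim}.
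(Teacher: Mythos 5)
Your high-level plan is the paper's: sample an independent $p$-random $G'$ so that conditioning on $\Gamma=\Gamma_0$ makes all the vertex sets $V_B,B^P,B^Q,T_B^P,T_B^Q$ deterministic, use item~(4) of $\Gamma$ to split into independent $B$- and $C$-sides, and reduce to showing $(G[V_B]\mid\Gamma_B)$ is $e^{-\Theta(n^{1/2-\alpha})}$-close to the partially revealed graph on $V_B$ with revealed part $A_B$ (this is exactly \cref{lem:B_Affected}, proved in \cref{subsec:comb-lemma}).

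The gap is in the central claim that, after bookkeeping, the restriction of the conditioned stars to $\Sigma^*=S(B^P,T_B^P)\cup S(B^Q,T_B^Q)$ is $\Theta(n^{1/2-\alpha})$-transformed, so that \cref{cor:transformed} and then \cref{lem:Y-epsUniform} apply. Once the bookkeeping is actually carried out (this is \cref{claim:stars}), the stars whose parities remain undetermined and which intersect $\Sigma_1=S(B^P,T_B^P)$ are precisely the \emph{column} stars $\mathcal P|_{\Sigma_1}=\{S(b,B^P):b\in T_B^P\}$ from the $(B,C)$-removal together with the \emph{row} stars $\mathcal Q_1|_{\Sigma_1}=\{S(\beta,T_B^P):\beta\in B^P\}$ from the $(C,B)$-removal; symmetrically on $\Sigma_2$ one gets $\mathcal Q_0|_{\Sigma_2}$ and $\mathcal Q_2|_{\Sigma_2}$. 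Every potential edge of $\Sigma_1$ lies in exactly one row star and exactly one column star, so over $\FF_2$ the sum of all the indicator vectors is zero: this family is linearly dependent, and hence \emph{cannot} be $\eta$-transformed for any $\eta\ge 1$ (an $\eta$-layered sequence, and any invertible image of one, is linearly independent). The nesting-and-differencing trick of \cref{lem:UWlayered} reduces the $Q'_t$'s to the $Q_t$'s, but after that step the only undetermined stars are the $Q_0$'s, and it is the crossing between $Q_0$'s (rows) and $P$'s (columns) that kills the layering. Dropping one star to restore independence does not rescue the argument either, because no ordering gives the rows-and-columns family of a complete bipartite graph an $\Omega(n^{1/2-\alpha})$-layered structure: whichever of a row or a column you place before the rest loses essentially all of its elements to later sets. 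So \cref{lemma:AlmostUniform}/\cref{cor:transformed} cannot supply the $\eps$-uniformity that \cref{lem:Y-epsUniform} needs.

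This is exactly where the paper uses \cref{lem:bipartite}, which you never invoke. That lemma shows, by a dedicated pair-switching argument (\cref{claim:bipartite-pr-oneequal}, \cref{claim:bipartite-pair}), that the joint distribution of all row and column degree parities of a $p$-random bipartite graph is close to \emph{fix-parity} uniform, i.e., uniform subject to the one forced linear relation. That fix-parity uniformity is then fed into \cref{lem:evensumepsUniform} — not \cref{lem:Y-epsUniform} — to drop $\parity(\mathcal P),\parity(\mathcal Q_1)$ via \cref{claim:drop-P-Q1} and then $\parity(\mathcal Q_0),\parity(\mathcal Q_2)$ via \cref{claim:drop-Q0-Q2}. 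Relatedly, your use of \cref{lem:evensumepsUniform} for the ``target parities'' $\parity(\cJ_{A_B}),\parity(V_B)$ is a misallocation: those stay in the conditioning on both sides of the closeness statement (they are part of what makes $G_B$ a partially revealed $p$-graph) and do not need to be removed; the fix-parity machinery is needed to absorb the bipartite row/column dependence, which your sketch does not address.
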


We will prove \cref{prop:close-prg} in \cref{subsec:comb-lemma,subsec:comb-lemma-2}, basically by applying the lemmas in \cref{sec:prelim} a number of times. We now prove \cref{thm:recurrence} assuming \cref{prop:close-prg}.

\begin{proof}[Proof of \cref{thm:recurrence}]

Notice that by \cref{prop:process-success-prob} and a union bound, we know that the probability $\PP[\cE]\geq 1-e^{-\Theta(n^{1/2-\alpha})}$.
Furthermore, we have
\[\PP\left[G[V_B] \cup G[V_C]=H\mid \cE\right]=\EE_{\Gamma_0}\left[\PP[G[V_B] \cup G[V_C]=H\mid \Gamma=\Gamma_0]\right],\]
where the expectation is taken over all assignments $\Gamma_0$ such that $\cE$ is true. By \cref{prop:close-prg}, the graph $(G[V_B]\cup G[V_C]\mid \Gamma=\Gamma_0)$ is $e^{-\Theta(n^{1/2-\alpha})}$-close to $(G_B\cup G_C\mid \Gamma=\Gamma_0)$. 
This implies that for any assignment $H$ of potential edges in $\binom{V_B}{2}\cup \binom{V_C}{2}$, we have
\begin{align*}
    &1-e^{-\Theta(n^{1/2-\alpha})}\leq \frac{\PP[G[V_B] \cup G[V_C]=H\mid \Gamma=\Gamma_0]}{\PP[G_B\cup G_C =H\mid \Gamma=\Gamma_0]}\leq \lpr{1-e^{-\Theta(n^{1/2-\alpha})}}^{-1},
\end{align*}
whenever $H$ is in $\supp(G[V_B] \cup G[V_C]\mid \Gamma=\Gamma_0)=\supp(G_B\cup G_C\mid \Gamma=\Gamma_0)$. In particular, by an averaging argument, we have
\begin{align*}
    &1-e^{-\Theta(n^{1/2-\alpha})}\leq \frac{\PP[G[V_B] \text{ or }G[V_C] \text{ is even-degenerate}\mid \Gamma=\Gamma_0]}{\PP[G_B \text{ or }G_C \text{ is even-degenerate}\mid \Gamma=\Gamma_0]}\leq \lpr{1-e^{-\Theta(n^{1/2-\alpha})}}^{-1}.
\end{align*}

Now we can lower bound the probability that $G$ is even-degenerate as
    \begin{align*}
        &\PP[G\text{ is even-degenerate}]\\
        &\geq \PP[\cE]\cdot \PP[G[V_B] \text{ or }G[V_C] \text{ is even-degenerate}\mid \cE]\\
        &\geq\lpr{1-e^{-\Theta(n^{1/2-\alpha})}}\cdot \EE_{\Gamma_0}\left[\PP[G[V_B] \text{ or }G[V_C] \text{ is even-degenerate}\mid \Gamma=\Gamma_0]\right]\\
        &\geq \lpr{1-e^{-\Theta(n^{1/2-\alpha})}}\cdot\EE_{\Gamma_0}\left[\PP[G_B \text{ or }G_C \text{ is even-degenerate}\mid \Gamma=\Gamma_0]\right]
    \end{align*}
    Since $G_B$ and $G_C$ are independent conditioning on $\Gamma=\Gamma_0$, we get
    \begin{align*}
        &\EE_{\Gamma_0}\left[\PP[G_B \text{ or }G_C \text{ is even-degenerate}\mid \Gamma=\Gamma_0]\right]\\
       &=\EE_{\Gamma_0}\left[1-\PP[G_B \text{ is not even-degenerate}\mid \Gamma=\Gamma_0]\cdot\PP[G_C \text{ is not even-degenerate}\mid \Gamma=\Gamma_0]\right].
    \end{align*}
    Recall that {\rm(1)--(3)} in $\Gamma$ (\cref{def:gamma}) specify
exactly the revealed information for the partially revealed $p$-graphs on $V_B$
and $V_C$ with revealed parts $A_B$ and $A_C$ (see \cref{def:prg}), while all remaining components of $\Gamma$ concern edges outside
$\binom{V_B}{2}$ and $\binom{V_C}{2}$. Thus, by the definition of $f$, we obtain
    \[\PP[G_B \text{ is not even-degenerate}\mid \Gamma=\Gamma_0]\leq f(\abs{V_B}),\]\[\PP[G_C \text{ is not even-degenerate}\mid \Gamma=\Gamma_0]\leq f(\abs{V_C}),\]
    which gives the desired recursion
    \begin{equation}\label{eq:main-recursion}
        1-f(n)\geq \lpr{1-e^{-\Theta(n^{1/2-\alpha})}} \lpr{1-\max_{n'\in [n/4-o(n),n/4+o(n)]}f(n')^2}.\qedhere
    \end{equation}
\end{proof}

\subsection{A reduction of \cref{prop:close-prg}}\label{subsec:comb-lemma}
In the following two subsections we prove \cref{prop:close-prg}. Since the sequences of vertices $R_{BC},R_{CB}$ are determined by $\Gamma$, we need to decouple this dependency so that we can apply the lemmas in \cref{sec:prelim}. From now on, we fix an assignment $\Gamma=\Gamma_0$. Then $R_{BC},R_{CB}$, and all the vertex subsets defined in \cref{subsec:revealed-info} are now determined. 

Similar to what we did in the proof of \cref{prop:process-success-prob}, we independently sample a new $p$-graph $G'$ on the vertex set $V$. Let $\Gamma'$ be the set of random variables obtained from $\Gamma$ by replacing $G$ with $G'$. Note that the distribution of $(G\mid \Gamma=\Gamma_0)$ is the same as the distribution of $(G'\mid \Gamma'=\Gamma_0,\Gamma=\Gamma_0)$. The advantage of defining $G'$ is that we can consider $(G'\mid \Gamma',\Gamma=\Gamma_0)$ for different assignments of $\Gamma'$ with the vertex sets being deterministic rather than being a random variable depending on the probabilistic process we defined. Thus, it is sufficient to show that $G'[V_B]\cup G'[V_C]$, when conditioning on $G'[A_B],G'[A_C],\parity_{G'}(\mathcal{J}_{A_B}),\parity_{G'}(\mathcal{J}_{A_C}),\parity_{G'}(V_B),\parity_{G'}(V_C),\Gamma=\Gamma_0$, is $e^{-\Theta(n^{1/2-\alpha})}$-affected by $\Gamma'$. 

For simplicity, in the rest of the section, we will abuse notation by dropping the conditioning on $\Gamma=\Gamma_0$ from the notations, since we will always condition on $\Gamma = \Gamma_0$ from now on. We will also write $G$ and $\Gamma$ in place of $G'$ and $\Gamma'$, and write $\parity$ instead of $\parity_{G'}$.

As mentioned in \cref{rmk:gamma}, we first simplify $\Gamma$ by removing $G[A]$, $\parity(\mathcal{J}_A)$, $\parity(\cI_A^{BC})$, $\parity(\cI_A^{CB})$, $G[\mathcal{I}_e^{BC}]$, $G[\mathcal{I}_e^{CB}]$, as these pieces of information are determined by $G\setminus(G[B]\cup G[C])$. Furthermore, we can remove $\parity(V)$ because of the following observation.
\begin{observation}
    The parity $\parity(V)$ is determined by $G\setminus(G[B]\cup G[C])$, $\parity(V_B)$, $\parity(\cI_B^Q)$, $\parity(V_C)$, and $\parity(\cI_C^Q)$.
\end{observation}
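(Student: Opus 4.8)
The plan is to write $\parity(V)$ as an $\FF_2$-linear combination of the parities of pairwise disjoint sets of potential edges, each of which is either read off directly from $G\setminus(G[B]\cup G[C])$ or reconstructed from $\parity(V_B),\parity(\cI_B^Q)$ (resp. $\parity(V_C),\parity(\cI_C^Q)$) together with $G\setminus(G[B]\cup G[C])$. Recall that we have conditioned on $\Gamma=\Gamma_0$, so $R_{BC},R_{CB}$ and every vertex set derived from them are deterministic; in particular $B=V_B\sqcup\{\beta_1,\dots,\beta_{i_B}\}$ and $C=V_C\sqcup\{\gamma_1,\dots,\gamma_{i_C}\}$ are fixed partitions, where $\beta_1,\dots,\beta_{i_B}$ (resp. $\gamma_1,\dots,\gamma_{i_C}$) are removed in this order from the $W$-side of the $(C,B)$-removal (resp. $(B,C)$-removal).

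First I would split $\binom{V}{2}$ along $V=A\sqcup B\sqcup C$ into $\binom{A}{2},\binom{B}{2},\binom{C}{2},S(A,B),S(A,C),S(B,C)$. Since $G[B]=G[\binom{B}{2}]$ and $G[C]=G[\binom{C}{2}]$, revealing $G\setminus(G[B]\cup G[C])$ pins down every potential edge outside $\binom{B}{2}\cup\binom{C}{2}$, hence determines $\parity(\binom{A}{2}),\parity(S(A,B)),\parity(S(A,C)),\parity(S(B,C))$, and therefore determines $\parity(V)\oplus\parity(\binom{B}{2})\oplus\parity(\binom{C}{2})$. So it suffices to recover $\parity(\binom{B}{2})$ and $\parity(\binom{C}{2})$; by symmetry I only treat $B$. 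Counting edges of $B$ by their smaller endpoint in the order $\beta_1,\dots,\beta_{i_B}$ followed by $V_B$ gives
\[
\parity\big(\tbinom{B}{2}\big)=\parity(V_B)\oplus\bigoplus_{\ell=1}^{i_B}\parity\big(S(\beta_\ell,B\setminus\{\beta_1,\dots,\beta_\ell\})\big),
\]
using that these sets are pairwise disjoint and cover $\binom{B}{2}\setminus\binom{V_B}{2}$.

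It then remains to recover each $\parity\big(S(\beta_\ell,B\setminus\{\beta_1,\dots,\beta_\ell\})\big)$, and this is the step that uses the definition of $\cI_B^Q$. At the round $\ptrU_\ell$ of the $(C,B)$-removal in which $\beta_\ell$ is removed, the star $S(\beta_\ell,V\setminus R^{(\ptrU_\ell-1)})$ is placed into $\cI_W^{(\ptrU_\ell)}\subseteq\cI_B^Q$, since $\beta_\ell$ lies in the block $W_j$ queried in that round and has not yet been removed. As $\beta_\ell$ is the $\ell$-th vertex removed from $W=B$, we have $R^{(\ptrU_\ell-1)}\cap B=\{\beta_1,\dots,\beta_{\ell-1}\}$, so
\[
S(\beta_\ell,V\setminus R^{(\ptrU_\ell-1)})\cap\tbinom{B}{2}=S(\beta_\ell,B\setminus\{\beta_1,\dots,\beta_\ell\}),
\]
while the remaining part $S(\beta_\ell,V\setminus R^{(\ptrU_\ell-1)})\setminus\binom{B}{2}$ lies in $S(B,A)\cup S(B,C)$, which is revealed by $G\setminus(G[B]\cup G[C])$. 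Hence $\parity\big(S(\beta_\ell,B\setminus\{\beta_1,\dots,\beta_\ell\})\big)$ is the XOR of an entry of $\parity(\cI_B^Q)$ and a quantity determined by $G\setminus(G[B]\cup G[C])$; summing over $\ell$ recovers $\parity(\binom{B}{2})$ from $\parity(V_B),\parity(\cI_B^Q)$ and $G\setminus(G[B]\cup G[C])$. Running the symmetric argument on the $(B,C)$-removal recovers $\parity(\binom{C}{2})$ from $\parity(V_C),\parity(\cI_C^Q)$ and $G\setminus(G[B]\cup G[C])$, and substituting both into the first display completes the proof. The only delicate point is the matching in this last step: the sets stored in $\cI_B^Q$ are stars into $V\setminus R^{(\cdot)}$ that also reach into $A$ and $C$, so one must use that $\beta_\ell$'s star is recorded at the exact round $\beta_\ell$ is removed — so that its $B$-part is truncated to precisely $B\setminus\{\beta_1,\dots,\beta_\ell\}$ — and that the overshoot into $A\cup C$ already sits in the revealed part $G\setminus(G[B]\cup G[C])$.
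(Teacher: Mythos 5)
Your proof is correct and follows essentially the same route as the paper's: reduce $\parity(V)$ to $\parity(\binom{B}{2})$ and $\parity(\binom{C}{2})$ via the disjoint decomposition of $\binom{V}{2}$, then recover each of these by noting that the stars $S(\beta_\ell, B\setminus\{\beta_1,\dots,\beta_{\ell-1}\})$ (the $\binom{B}{2}$-restrictions of the stars recorded in $\cI_B^Q$) tile $\binom{B}{2}\setminus\binom{V_B}{2}$ and have overshoot into $A\cup C$ already fixed by $G\setminus(G[B]\cup G[C])$. You simply make explicit the bookkeeping (which round each $\beta_\ell$-star is added and why its $B$-part truncates correctly) that the paper asserts in one line.
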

\begin{proof}
    We first note that $\parity(V)$ is determined by $G\setminus(G[B]\cup G[C])$, $\parity(B)$ and $\parity(C)$ since $G[V]$ is the disjoint union of $G\setminus(G[B]\cup G[C])$, $G[B]$ and $G[C]$. 
    
    Now, we claim that $\parity(B)$ is determined by $G\setminus(G[B]\cup G[C])$, $\parity(V_B)$ and $\parity(\cI_B^Q)$. Note that all the potential edge sets in $\cI_B^Q$ are disjoint from $\binom{C}{2}$, and $\cI_B^Q|_{\binom{B}{2}}$ contains $S(\beta_m,B\setminus\{\beta_1,\dots,\beta_{m-1}\})$ for all $1\leq m\leq i_B$. Therefore, $\parity(\beta_m,B\setminus\{\beta_1,\dots,\beta_{m-1}\})$ is determined by $G\setminus(G[B]\cup G[C])$ and $\parity(\cI_B^Q)$. Note that 
    \[\bigcup_{m=1}^{i_B}S(\beta_m,B\setminus\{\beta_1,\dots,\beta_{m-1}\})=\binom{B}{2}\setminus\binom{V_B}{2},\]
    and hence $\parity(B)$ is determined by $G\setminus(G[B]\cup G[C])$, $\parity(V_B)$ and $\parity(\cI_B^Q)$. Similarly, we know that $\parity(C)$ is determined by $G\setminus(G[B]\cup G[C])$, $\parity(V_C)$ and $\parity(\cI_C^Q)$.
\end{proof}

Then we divide the remaining information except for $G\setminus(G[B]\cup G[C])$ in $\Gamma$ into two parts $\Gamma_B,\Gamma_C$ as follows. Let $\Gamma_B$ be the collection of information 
\begin{align*}
    \Gamma_B := \lpr{\parity(\cI_B^P|_{\binom{B}{2}}),\parity(\cI_B^Q|_{\binom{B}{2}}),G[A_B],\parity(\mathcal{J}_{A_B}),\parity(V_B),G[\Sigma_B]}.
\end{align*}
Similarly, let $\Gamma_C$ be the collection of information
\begin{align*}
    \Gamma_C := \lpr{\parity(\cI_C^P|_{\binom{C}{2}}),\parity(\cI_C^Q|_{\binom{C}{2}}),G[A_C],\parity(\mathcal{J}_{A_C}),\parity(V_C),G[\Sigma_C]}.
\end{align*}
It follows that conditioning on $\Gamma$ and conditioning on $(\Gamma_B,\Gamma_C,G\setminus(G[B]\cup G[C]))$ produce the same distribution of $G$.
Furthermore, $G[B]$, $G[C]$ are conditionally independent given $\Gamma$, i.e., $(G[V_B]\mid\Gamma)$ and $(G[V_C]\mid\Gamma)$ are independent. 

Thus, we may focus on showing that $(G[V_B]\mid\Gamma)$ is close to a partially revealed $p$-graph on $V_B$, as the proof for $G[V_C]$ will follow identically by symmetry. Observe that
\begin{align*}
    (G[V_B]\mid\Gamma)=(G[V_B] \mid\Gamma_B,\Gamma_C,G\setminus(G[B]\cup G[C]))=(G[V_B]\mid\Gamma_B).
\end{align*}
So, it suffices to prove the following key lemma.

\begin{lemma}\label{lem:B_Affected}
   The graph $(G[V_B]\mid \Gamma_B)$ is $e^{-\Theta(n^{1/2-\alpha})}$-close to the partially revealed $p$-graph 
   \[(G[V_B]\mid G[A_B],\parity(\mathcal{J}_{A_B}),\parity(V_B)).\]
\end{lemma}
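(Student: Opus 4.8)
Following the reductions already carried out, $G$ is a fresh $p$-random graph on $V$, and the sets $V_B, A_B, B^P, B^Q, T_B^P, T_B^Q, \Sigma_B$, the vertex order $b_1,\dots,b_{|B|}$, and the removal order $\beta_1,\dots,\beta_{i_B}$ of $B$ are all deterministic. Write $\Gamma_B^0:=(G[A_B],\parity(\mathcal J_{A_B}),\parity(V_B))$ for the base conditioning. The plan mirrors the proof of \cref{prop:process-success-prob}: isolate a reservoir of potential edges that stays $p$-random under all the conditioning and whose parities ``absorb'' the extra information in $\Gamma_B$ beyond $\Gamma_B^0$, namely $\parity(\cI_B^P|_{\binom B2})$, $\parity(\cI_B^Q|_{\binom B2})$, and $G[\Sigma_B]$. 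The reservoir here is the pair of bipartite sets $S(B^P,T_B^P)$ and $S(B^Q,T_B^Q)$, which by the definition of $\Sigma_B$ is disjoint from $\binom{V_B}{2}$, from $\binom{A_B}{2}$, and from $\Sigma_B$. First I would reveal $G[\Sigma_B]$ and $G[A_B]$: since $G$ is $p$-random and these sets are disjoint from the reservoir and from $\binom{V_B}{2}$, this leaves the joint law of $G[V_B]$ and of the reservoir unchanged, but rewrites each parity in $\parity(\cI_B^P|_{\binom B2})\cup\parity(\cI_B^Q|_{\binom B2})$ as the parity of a potential-edge set contained in $(\binom{V_B}{2}\setminus\binom{A_B}{2})\sqcup S(B^P,T_B^P)\sqcup S(B^Q,T_B^Q)$.

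The heart of the argument is a structural claim in the spirit of \cref{lem:UWlayered}: one describes, for each center, the intersection of $S(b_i,B\setminus\{b_1,\dots,b_{i-1}\})$ and of $S(\beta_m,B\setminus\{\beta_1,\dots,\beta_{m-1}\})$ with the reservoir and with $\binom{V_B}{2}\setminus\binom{A_B}{2}$, by cases according to whether the center lies in $B^P$, in $B^Q$, in $V_B\setminus A_B$, or in one of the forbidden blocks $B_{\#}\cup D_B^P\cup D_B^Q$. The points to extract are: (i) when the center falls outside $V_B\cup B^P\cup B^Q$, or is a high-order vertex of $V_B$ (so that its remaining suffix inside $V_B$ lies in $A_B$), the parity is already determined by $G[\Sigma_B]$ and $G[A_B]$ and may be discarded; (ii) for the remaining centers, because the removed prefix and the forbidden blocks are nested in vertex order, the intersection with $B^P$ (resp.\ $B^Q$) is all-or-nothing, so the restricted parity becomes exactly a \emph{degree parity} of one of the two bipartite graphs $G[B^P,T_B^P]$, $G[B^Q,T_B^Q]$ — a ``column'' parity $\parity(\beta,T_B^P)$ or $\parity(\beta,T_B^Q)$ when the center is $\beta\in B^P\cup B^Q$, and a ``row'' parity $\parity(b,B^P)$ (possibly summed with a row $\parity(b,B^Q)$) when the center is some $b\in T_B^P$. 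Hence, after an invertible $\FF_2$-linear change of basis (\cref{obs:LinearTrans}), the surviving restricted parities form a subfamily of the full degree-parity list $((\parity(x,T_B^P))_{x\in B^P},(\parity(y,B^P))_{y\in T_B^P})$ of $G[B^P,T_B^P]$ together with the analogous list for $G[B^Q,T_B^Q]$.

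Given that claim, the conclusion follows as in \cref{prop:process-success-prob}, using the fixed-parity toolkit. The two bipartite graphs are vertex-disjoint with side sizes $\eta=\Theta(n^{1/2-\alpha})$ and $\Theta(n)$, so \cref{lem:bipartite} gives that the degree-parity vector of each is fixed-parity $e^{-\Theta(n^{1/2-\alpha})}$-uniform, and any subfamily omitting one coordinate per graph is therefore $e^{-\Theta(n^{1/2-\alpha})}$-uniform; since the reservoir is $p$-random given $G[\Sigma_B],G[A_B]$, \cref{cor:transformed} then yields that the restriction-to-the-reservoir of the surviving parity family is $e^{-\Theta(n^{1/2-\alpha})}$-uniform. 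Now apply \cref{lem:evensumepsUniform} (or \cref{lem:Y-epsUniform} after peeling off the global-parity coordinate, which is supplied by $\parity(V_B)\in\Gamma_B^0$) with $\vec X'$ the reservoir edge-variables, $\vec X$ the edge-variables of $\binom{V_B}{2}\setminus\binom{A_B}{2}$ conditioned on $\Gamma_B^0$ — independent of $\vec X'$, as $\Gamma_B^0$ only involves edges inside $\binom{V_B}{2}$ — and $\vec Y$ the surviving extra parities: this shows $(\vec X\mid\Gamma_B^0,\vec Y)$ is $e^{-\Theta(n^{1/2-\alpha})}$-close to $(\vec X\mid\Gamma_B^0)$. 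Since $(G[V_B]\mid\Gamma_B)$ is the graph whose $\binom{A_B}{2}$-part is $G[A_B]$ and whose $\binom{V_B}{2}\setminus\binom{A_B}{2}$-part is distributed as $(\vec X\mid\Gamma_B^0,\vec Y)$, while $(G[V_B]\mid\Gamma_B^0)$ has the latter part distributed as $(\vec X\mid\Gamma_B^0)$, this is exactly the claimed closeness.

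The main obstacle is the structural claim of the second paragraph, the analog of \cref{lem:UWlayered}. Unlike there, one must track two different orderings of $B$ at once — the fixed vertex order used by the $(B,C)$-removal and the removal order used by the $(C,B)$-removal — and keep straight the five overlapping vertex sets $B_{\#},D_B^P,D_B^Q,B^P,B^Q$; verifying the all-or-nothing behavior of suffixes relative to the forbidden blocks, that the discarded parities really are determined by $G[\Sigma_B]$ and $G[A_B]$, and that the surviving parities reduce to bipartite degree parities with no hidden linear dependence, is the delicate bookkeeping that occupies \cref{subsec:comb-lemma,subsec:comb-lemma-2} (together with the symmetric analysis for $G[V_C]$).
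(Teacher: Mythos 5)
Your proposal identifies the right mechanism: the extra parities in $\Gamma_B$ beyond $(G[A_B],\parity(\cJ_{A_B}),\parity(V_B))$ are to be ``absorbed'' by the two fresh bipartite reservoirs $S(B^P,T_B^P)$ and $S(B^Q,T_B^Q)$, using \cref{lem:bipartite} for fix-parity uniformity and \cref{lem:evensumepsUniform}/\cref{lem:Y-epsUniform} for the transfer; and your preliminary reduction (reveal $G[\Sigma_B]$, $G[A_B]$, discard determined stars, classify the survivors by center) is exactly the paper's strategy. This is the same approach as the paper.

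However, the one-shot application you propose in the final step has a concrete gap, and it is precisely the reason the paper runs two separate passes (\cref{claim:drop-P-Q1} then \cref{claim:drop-Q0-Q2}) rather than one. First, with $\vec X'$ equal to the entire reservoir $\Sigma_1\sqcup\Sigma_2$, the restriction $\vec Z$ of the surviving parity family is (essentially) the concatenation of two \emph{independent} fix-parity $\eps$-uniform vectors, one per bipartite reservoir. That product has two independent parity constraints, so it is not fix-parity uniform, and \cref{lem:evensumepsUniform} does not apply as stated; nor does ``peeling off the global-parity coordinate'' help, because you need to peel two, and $\parity(V_B)\in\Gamma_B^0$ accounts for at most one of them. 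In the paper the two discarded total parities are pinned down by the separate observations \cref{obs:parity-P-Q1} and \cref{obs:parity-Q0-Q2}, and the first of these in fact uses $\parity(\cQ_2)$ (information from the \emph{second} batch), which is why the paper removes $\cP,\cQ_1$ first, with $\cQ_2$ still conditioned on, and only then removes $\cQ_0,\cQ_2$. Second, the all-or-nothing behavior you invoke is clean for $P(b)\cap\Sigma_1$ (since $B^P$ sits in the topmost blocks and $b\in T_B^P$ always precedes them in the vertex order), but it fails for $P(b)\cap\Sigma_2$: a vertex $b\in T_B^P\cap T_B^Q$ can interleave with $D_B^Q\supseteq B^Q$ in the vertex order, so $P(b)$ can intersect $S(B^Q,T_B^Q)$ only partially, and the restricted parity is no longer a pure row parity of $G[B^Q,T_B^Q]$. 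The paper sidesteps this by placing $\Sigma_2$ inside $\vec X$ (not $\vec X'$) in the first pass, so the structure of $\mathcal P\cap\Sigma_2$ is irrelevant there. So while your high-level plan is the paper's, the single application of the parity-transfer lemma does not go through as written; it needs to be split into two stages ordered so that the parity constraint of the first can be certified using the conditioning still present from the second.
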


Note that \cref{lem:B_Affected} yields \cref{prop:close-prg}; given $G$ as in \cref{prop:close-prg}, if we \textit{only} condition on some assignments of $ G[A_B]$, $\parity(\mathcal{J}_{A_B})$, $\parity(V_B)$ that agree with $\Gamma_0$, then the graph $G[V_B]$ is equally distributed as the partially revealed graph $G_B$ defined in \cref{subsec:analysis}, and the same holds for $G[V_C]$, $G_C$ by symmetry.
The next subsection is devoted to proving \cref{lem:B_Affected}.

\subsection{Proof of \cref{lem:B_Affected}}\label{subsec:comb-lemma-2}

 We first characterize and clean up the information  $\parity\lpr{\cI_B^P|_{\binom{B}{2}}}$ and $\parity\lpr{\cI_B^Q|_{\binom{B}{2}}}$ in $\Gamma_B$ via the following \cref{obs:stars-in-P} to \cref{obs:IBRtoQ} and \cref{claim:stars}. The stars in $\parity\lpr{\cI_B^P|_{\binom{B}{2}}}$ are easily characterized by the following observation.

\begin{observation}\label{obs:stars-in-P}
     All the stars in $\cI_B^P|_{\binom{B}{2}}$ are of the form 
\[P(b_i):= S(b_i,\{b_{i+1},\dots,b_{\abs{B}}\}).\]
\end{observation}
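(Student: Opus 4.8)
The plan is to simply unwind the definitions; this observation is a bookkeeping statement with no real content beyond tracking which vertices have been removed. First I would recall from \cref{def:info-removal} that $\cI_B^P$ is the collection $\cI_U^{(\ptrU_0)}$ recorded during the $(B,C)$-removal, whose inputs are the ordered set $U=\{b_1,\dots,b_{\abs{B}}\}$ and $W=C_\#\sqcup C_1\sqcup\dots\sqcup C_s$. By step (1) of \cref{def:lrremoval}, a set is appended to $\mathcal I_U$ exactly at a round $\ptrU$ whose processed vertex $\overline u_\ptrU$ lies in $U$; since $U$ follows $A$ in the concatenated sequence $(\overline u_1,\dots,\overline u_{\abs{A}+\abs{U}})$, these are precisely the rounds $\ptrU=\abs{A}+i$ for $1\le i\le\abs{B}$, at which $\overline u_\ptrU=b_i$ and the appended set is the star $S\lpr{b_i,\,V\setminus R^{(\ptrU-1)}}$.

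The only point that needs a sentence of justification is the description of $R^{(\ptrU-1)}\cap B$: at the beginning of round $\ptrU=\abs{A}+i$ the vertices of $B$ already removed are exactly $b_1,\dots,b_{i-1}$. This is because in the $(B,C)$-removal the deleted vertices are, on the one hand, the explicitly processed vertices $\overline u_1,\dots,\overline u_{\ptrU-1}$ (those among them lying in $B$ being $b_1,\dots,b_{i-1}$), and, on the other hand, the ``extra'' vertices deleted in the odd-parity branch, which by construction always lie in some block $W_j=C_j\subseteq C$ and hence never in $B$. Consequently $\lpr{V\setminus R^{(\ptrU-1)}}\cap B=\{b_i,b_{i+1},\dots,b_{\abs{B}}\}$.

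It then remains to restrict to $\binom{B}{2}$: since $S\lpr{b_i,\,V\setminus R^{(\ptrU-1)}}$ consists of all pairs $\{b_i,x\}$ with $x\in\lpr{V\setminus R^{(\ptrU-1)}}\setminus\{b_i\}$, intersecting with $\binom{B}{2}$ retains exactly the pairs $\{b_i,x\}$ with $x\in\{b_{i+1},\dots,b_{\abs{B}}\}$, so
\[S\lpr{b_i,\,V\setminus R^{(\ptrU-1)}}\cap\binom{B}{2}=S\lpr{b_i,\{b_{i+1},\dots,b_{\abs{B}}\}}=P(b_i),\]
which is the claimed form. (In fact this identifies $\cI_B^P|_{\binom{B}{2}}$ with $\{P(b_i):1\le i\le\abs{B}\}$, though only the displayed inclusion is used later.) There is no genuine obstacle; the one thing to be careful about is the bookkeeping claim of the second paragraph, namely that the $(B,C)$-removal never deletes a vertex of $B$ other than the one it is currently processing.
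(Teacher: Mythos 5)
Your proof is correct and is exactly the straightforward definition-unwinding argument the paper has in mind (the paper treats this observation as immediate and gives no separate proof). You correctly isolate the one non-obvious bookkeeping point — that the extra vertex deleted in the odd-parity branch always lies in $W=C$ and never in $U=B$, so $R^{(\ptrU-1)}\cap B=\{b_1,\dots,b_{i-1}\}$ when processing $b_i$ — and the restriction to $\binom{B}{2}$ then yields $P(b_i)$ as claimed.
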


We now investigate what the stars in $\cI_B^Q|_{\binom{B}{2}}$ look like. Recall from \cref{subsec:revealed-info} that $\{\beta_1,\dots,\beta_{i_B}\}$ is the set of vertices removed from $B$ in the $(C,B)$-removal. Furthermore, these vertices all lie in $\bigcup_{i=1}^s B_i=B\setminus B_{\#}$. Let
\[
F_0=\{\beta_1,\dots,\beta_{i_B}\}=R_{CB}\cap B, \quad F_1=\bigcup_{i=1}^s B_i\setminus F_0=B\setminus (B_{\#}\cup F_0)
\]
denote the set of removed and remaining vertices in $B\setminus B_{\#}$. By the same argument as in the derivation of \cref{eq:I-sigma-1}, we have the following characterization of the stars in $\cI_B^Q|_{\binom{B}{2}}$ (again, we use the convention that $\{\beta_1,\dots,\beta_0\}=\varnothing$), which we split into two cases depending on $\beta\in F_0$ or $\beta\in F_1$.

\begin{observation}\label{obs:QlocationF}
    For every $\beta=\beta_m\in F_0$, the stars in $\cI_B^Q|_{\binom{B}{2}}$ centered at $\beta_m$ are given by $\{Q'_t(\beta_m):0\leq t\leq\floor{\frac{m-1}{s}}\}$, with
    \[
    Q'_t(\beta_m):=S(\beta_m,B\setminus\{\beta_1,\dots,\beta_{m-ts-1}\}).
    \]
\end{observation}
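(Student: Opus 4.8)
\textbf{Proof proposal for \cref{obs:QlocationF}.}

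The plan is to trace through how stars centered at $\beta_m$ get added to $\cI_B^Q$ during the $(C,B)$-removal, and then intersect each such star with $\binom{B}{2}$. Recall that in the $(C,B)$-removal, the set $W$ of the generic $(U,W)$-removal is instantiated as $B_\# \sqcup B_1 \sqcup \dots \sqcup B_s$, and $\cI_B^Q$ is the set $\cI_W^{(\ptrU_0)}$ for that run. So this is essentially the same computation already carried out in the proof of \cref{lem:UWlayered} to derive \cref{eq:I-sigma-1}, specialized to the $(C,B)$-removal and restricted to $\binom{B}{2}$ rather than to $\Sigma$; I would phrase the argument as a direct citation of that derivation with the bookkeeping translated into the current notation.

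First I would recall from \cref{def:lrremoval} that a star centered at $w \in W$ is added to $\cI_W$ only on a round $\ptrU'$ at which the $\ptrW'$-th vertex gets removed from $W$ with $\ptrW' \equiv j \pmod s$, where $w \in W_j$; on that round the star $S(w, V \setminus R^{(\ptrU'-1)})$ is added, and at that moment $R^{(\ptrU'-1)} \cap W = \{w_1, \dots, w_{\ptrW'-1}\}$ consists of the first $\ptrW'-1$ vertices removed from $W$. In the $(C,B)$-removal these removed vertices are exactly $\beta_1, \dots, \beta_{\ptrW'-1}$. Since $\beta = \beta_m \in F_0$ means $\beta$ is itself the $m$-th vertex removed from $B$, the set of relevant indices $\ptrW'$ is precisely those with $\ptrW' \le m$ and $\ptrW' \equiv m \pmod s$, i.e. $\ptrW' = m - ts$ for $0 \le t \le \floor{\frac{m-1}{s}}$. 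For such a $\ptrW'$ the star added is $S(\beta_m, V \setminus \{\beta_1, \dots, \beta_{m-ts-1}\})$ together with whatever other already-removed vertices lie in $R^{(\ptrU'-1)}$; intersecting with $\binom{B}{2}$ kills every potential edge incident to a vertex outside $B$ and, since the removed vertices from $A \cup C$ contribute only edges leaving $B$, we are left with $S(\beta_m, B \setminus \{\beta_1, \dots, \beta_{m-ts-1}\}) = Q'_t(\beta_m)$. This matches the claimed list.

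I do not expect a genuine obstacle here; the statement is a direct unwinding of definitions, and the only care needed is the indexing sleight-of-hand that the number of vertices removed from $A \cup U$ before the round in question is irrelevant to the restriction to $\binom{B}{2}$ — only the $W$-side removal count $\ptrW'$ matters — together with checking that the convention $\{\beta_1,\dots,\beta_0\} = \varnothing$ correctly handles the $t = \floor{\frac{m-1}{s}}$ term when $s \mid m-1$ does not occur but $m - ts - 1$ could be $0$. The mild subtlety worth a sentence is why we may cite "the same argument as in \cref{eq:I-sigma-1}" verbatim: there $\Sigma = \binom{W}{2} \cup S(A \cup U, W_\#)$ while here we restrict to $\binom{B}{2} = \binom{W}{2}$, so the stars $Q'_t$ are literally the $\binom{W}{2}$-parts of the sets appearing there, and nothing about the derivation changes.
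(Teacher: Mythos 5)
Your proposal is correct and takes essentially the same approach as the paper, which itself just invokes ``the same argument in the derivation of \cref{eq:I-sigma-1}'' from the proof of \cref{lem:UWlayered}; you unwind that reference explicitly, tracking the pointer $\ptrW'$ through the $(C,B)$-removal and noting that stars centered at $\beta_m$ are added exactly at rounds with $\ptrW' \equiv m \pmod s$ and $\ptrW' \le m$, then restricting to $\binom{B}{2}$. One small clarification to your final parenthetical: the boundary case is precisely $s \mid m-1$, for then $t=\lfloor(m-1)/s\rfloor$ gives $m-ts-1=0$ and the convention $\{\beta_1,\dots,\beta_0\}=\varnothing$ is what makes $Q_0'(\beta_m)=S(\beta_m,B)$ well-defined; your phrasing ``when $s\mid m-1$ does not occur but $m-ts-1$ could be $0$'' conflates these, though it does not affect the argument.
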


\begin{observation}\label{obs:QlocationT}
    For every $\beta\in F_1$ with $\beta\in B_j$, the stars in $\cI_B^Q|_{\binom{B}{2}}$ centered at $\beta$ are given by $\{Q'_t(\beta):0\leq t\leq\floor{\frac{m_j-1}{s}}\}$, with
    \[
    m_j=\max\{m:m\leq i_B,\, m\equiv j\pmod{s} \}
    \]
    and
    \[
    Q'_t(\beta):=S(\beta,B\setminus\{\beta_1,\dots,\beta_{m_j-ts-1}\}).
    \]
     By the assumption that $i_B-\eta+1>2s$, the set $\{m:m\leq i_B,\, m\equiv j\pmod{s} \}$ is always non-empty, and hence $m_j$ is well-defined.
\end{observation}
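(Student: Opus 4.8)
\noindent\textit{Proof proposal.} The plan is to unwind \cref{def:lrremoval} for the $(C,B)$-removal exactly as in the derivation of \eqref{eq:I-sigma-1}, but now tracking the pointer $\ptrW$ for a vertex $\beta$ that is never removed. Recall that the $(C,B)$-removal is the $(U,W)$-removal with $U=C$ and $W=B_{\#}\sqcup B_1\sqcup\dots\sqcup B_s$, so here $W=B$. Since $A\cap B=\varnothing$ and $C\cap B=\varnothing$, no vertex of $A\cup U$ ever lies in $B$; and since the partner search in \cref{def:lrremoval} only ever inspects the blocks $W_1,\dots,W_s$, the vertices removed from $B$ during the $(C,B)$-removal are precisely $\beta_1,\dots,\beta_{i_B}$, removed in this order. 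Consequently, at every point of the process $R\cap B=R\cap W$, and this set equals $\{\beta_1,\dots,\beta_{\ell-1}\}$ exactly when the pointer $\ptrW$ currently has the value $\ell$.

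First I would pin down when a star gets added to $\cI_B^Q=\cI_W^{(\ptrU_0)}$: this happens exactly at those rounds $\ptrU$ where $\parity(\overline u_\ptrU,V\setminus R^{(\ptrU-1)})=1$, in which case, with $j=\ptrW\pmod s$, the star $S(w,V\setminus R^{(\ptrU-1)})$ is added for every $w\in W_j\setminus R^{(\ptrU-1)}$. In a successful run the pointer $\ptrW$ is incremented by one on each such occasion and nowhere else, so these occasions occur at the consecutive pointer values $\ptrW=1,2,\dots,i_B$; at the occasion with $\ptrW=\ell$ the block queried is $W_{\ell\,\bmod\,s}$, the removed vertex is $\beta_\ell$, and at that moment $R^{(\ptrU-1)}\cap B=\{\beta_1,\dots,\beta_{\ell-1}\}$.

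Now fix $\beta\in F_1$ with $\beta\in B_j$. Since $\beta$ is never removed, $\beta\notin R^{(\ptrU-1)}$ throughout, so $\beta$ receives a star precisely at the occasions with $\ell\equiv j\pmod s$ and $\ell\le i_B$. The set $\{m\le i_B:m\equiv j\pmod s\}$ is nonempty because $i_B>2s\ge s\ge j$ (using $i_B-\eta+1>2s$ and $\eta\ge1$); writing $m_j$ for its maximum and using $1\le j\le s$ so that $\lfloor(m_j-1)/s\rfloor=(m_j-j)/s$, the admissible values of $\ell$ are exactly $m_j-ts$ for $t=0,1,\dots,\lfloor(m_j-1)/s\rfloor$. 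At the occasion with $\ell=m_j-ts$ the star added is $S(\beta,V\setminus R^{(\ptrU-1)})$ with $R^{(\ptrU-1)}\cap B=\{\beta_1,\dots,\beta_{m_j-ts-1}\}$; intersecting with $\binom{B}{2}$ and using $R^{(\ptrU-1)}\cap B=R^{(\ptrU-1)}\cap W$ yields $S(\beta,B\setminus\{\beta_1,\dots,\beta_{m_j-ts-1}\})=Q'_t(\beta)$, as claimed.

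The argument is purely a bookkeeping unwinding — indeed it is the ``same argument'' as for \eqref{eq:I-sigma-1}, specialized to a never-removed vertex — so there is no real obstacle. The only points that require care are the translation from the round counter $\ptrU$ to the pointer $\ptrW$ (namely that in a successful run the partner-queries land on consecutive pointer values $1,\dots,i_B$), and the arithmetic identity $\lfloor(m_j-1)/s\rfloor=(m_j-j)/s$ valid for $1\le j\le s$.
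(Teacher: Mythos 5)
Your proposal is correct and matches the paper's intended argument: the paper does not give a standalone proof of this observation but simply invokes ``the same argument in the derivation of \eqref{eq:I-sigma-1}'' from the proof of \cref{lem:UWlayered}, and your unwinding of \cref{def:lrremoval} for the $(C,B)$-removal (tracking the pointer $\ptrW$, identifying the occasions with pointer values $1,\dots,i_B$, noting $R^{(\ptrU-1)}\cap B=\{\beta_1,\dots,\beta_{\ell-1}\}$ at the occasion with $\ptrW=\ell$, and restricting to $\binom{B}{2}$) is exactly that argument specialized to a never-removed $\beta\in F_1\cap B_j$. The nonemptiness check and the identity $\lfloor(m_j-1)/s\rfloor=(m_j-j)/s$ for $1\le j\le s$ are also correct.
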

Inspired by the two observations above, we define $m(\beta):=m$ if $\beta=\beta_m\in F_0$ and $m(\beta):=m_j$ if $\beta\in F_1\cap B_j$.
We know that in both cases, we have 
\[Q'_0(\beta)\subseteq Q'_1(\beta)\subseteq\dots \subseteq Q'_{\floor{\frac{m(\beta)-1}{s}}}(\beta).\] 
As in \cref{sec:uw-removal}, for  every $Q_t'(\beta)\in \mathcal I_B^Q|_{\binom{B}{2}}$ with $t\geq 1$, we define $Q_t(\beta):=Q_t'(\beta)\setminus Q'_{t-1}(\beta)$; for every $Q_0'(\beta)\in \mathcal I_B^Q|_{\binom{B}{2}}$, we  let $Q_0(\beta):=Q_0'(\beta)$. Define
\[\mathcal{Q}:= \left\{Q_t(\beta):\beta\in B\setminus B_{\#},0\leq t\leq \floor{\frac{m(\beta)-1}{s}}\right\}.\]
By a similar argument as in \cref{lem:UWlayered}, $\parity(\mathcal Q)$ is obtained from $\parity\lpr{\mathcal I_B^Q|_{\binom{B}{2}}}$ via an invertible linear transformation. So by \cref{obs:LinearTrans}, we have the following.

\begin{observation}\label{obs:IBRtoQ}
    We have
    \begin{align*}
        (G[V_B]\mid\Gamma_B)&=(G[V_B]\mid \parity\lpr{\cI_B^P|_{\binom{B}{2}}},\parity\lpr{\cI_B^Q|_{\binom{B}{2}}}, G[A_B],\parity(\mathcal{J}_{A_B}),\parity(V_B),G[\Sigma_B])\notag\\
        &=(G[V_B]\mid \parity\lpr{\cI_B^P|_{\binom{B}{2}}},\parity(\mathcal{Q}), G[A_B],\parity(\mathcal{J}_{A_B}),\parity(V_B),G[\Sigma_B]).
    \end{align*}
\end{observation}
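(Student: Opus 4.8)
The plan is to observe that $\parity(\mathcal Q)$ and $\parity\lpr{\cI_B^Q|_{\binom{B}{2}}}$ determine each other, so that conditioning on one is the same as conditioning on the other, and the rest of the conditioning information is unchanged. Concretely, recall from \cref{obs:QlocationF} and \cref{obs:QlocationT} that for each $\beta\in B\setminus B_\#$ the stars of $\cI_B^Q|_{\binom{B}{2}}$ centered at $\beta$ form a nested chain $Q'_0(\beta)\subseteq Q'_1(\beta)\subseteq\dots\subseteq Q'_{\floor{(m(\beta)-1)/s}}(\beta)$, and $Q_t(\beta)$ was defined as the successive difference $Q'_t(\beta)\setminus Q'_{t-1}(\beta)$ for $t\ge 1$ (with $Q_0(\beta):=Q'_0(\beta)$). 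Since the $\beta$-chains are supported on disjoint stars for distinct $\beta$, it suffices to work one chain at a time: within a fixed chain of length $\ell+1$, the map sending the indicator vectors of $(Q'_0(\beta),\dots,Q'_\ell(\beta))$ to those of $(Q_0(\beta),\dots,Q_\ell(\beta))$ is the ``successive differences'' linear map on $\FF_2^{\ell+1}$, which is unitriangular and hence invertible over $\FF_2$. Taking the direct sum of these maps over all $\beta$ gives a single invertible linear transformation $T:\FF_2^{|\mathcal Q|}\to\FF_2^{|\mathcal Q|}$ with $(\mathbbm 1_{Q'_{\ast}}(e))_\ast = T(\mathbbm 1_{Q_\ast}(e))_\ast$ for every potential edge $e\in\binom{B}{2}$ — exactly the hypothesis of \cref{obs:LinearTrans} (equivalently, $(\mathcal Q$, i.e.\ $\cI_B^Q|_{\binom{B}{2}})$ is an $\eta$-transformed reindexing of the other in the sense of \cref{def:linear-transform}).

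The key steps, in order: (1) state that for each $\beta$ the chain $Q'_0(\beta)\subseteq\dots\subseteq Q'_{\floor{(m(\beta)-1)/s}}(\beta)$ is nested, citing \cref{obs:QlocationF,obs:QlocationT}; (2) note that the stars $S(\beta,\cdot)$ for distinct $\beta\in B\setminus B_\#$ lie in disjoint sets of potential edges, so the full collection $\cI_B^Q|_{\binom{B}{2}}$ decomposes as a disjoint union of these per-$\beta$ chains; (3) on each chain, record that the partial-sums / successive-differences correspondence is given by a unitriangular, hence invertible, matrix over $\FF_2$, and assemble these into one global invertible $T$; (4) apply \cref{obs:LinearTrans} (with the $X_i$ being the edge-indicators of $\binom{B}{2}$, the sets $A_j$ being the stars in $\mathcal Q$, and $A'_j$ the stars in $\cI_B^Q|_{\binom{B}{2}}$) to conclude that the $\sigma$-algebra generated by $\parity\lpr{\cI_B^Q|_{\binom{B}{2}}}$ equals that generated by $\parity(\mathcal Q)$; (5) since all the other conditioning terms $\parity\lpr{\cI_B^P|_{\binom{B}{2}}}$, $G[A_B]$, $\parity(\mathcal J_{A_B})$, $\parity(V_B)$, $G[\Sigma_B]$ appear identically on both sides, conclude that conditioning on the two lists yields the same distribution of $G[V_B]$.

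This observation is essentially bookkeeping — the genuinely substantive content (that nested chains arise, and that this is the natural way to expose the parities) was already front-loaded into \cref{obs:QlocationF,obs:QlocationT} and the analogous discussion in \cref{lem:UWlayered}. The only point requiring the slightest care is verifying invertibility of $T$: one must check that the per-$\beta$ ``partial sum'' matrices are simultaneously invertible, which follows because distinct $\beta$ contribute to disjoint coordinate blocks, so $T$ is block diagonal with unitriangular (all-ones lower-triangular) blocks over $\FF_2$, each of which has determinant $1$. I therefore expect no real obstacle; the main thing is to phrase the reduction cleanly so that \cref{obs:LinearTrans} applies verbatim with $t=\binom{|B|}{2}$, $r=|\mathcal Q|$.
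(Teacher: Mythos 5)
Your proof is correct and takes essentially the same route as the paper: the paper disposes of this observation in one line (``By a similar argument as in Lemma~\ref{lem:UWlayered}, $\parity(\mathcal Q)$ is obtained from $\parity(\cI_B^Q|_{\binom{B}{2}})$ via an invertible linear transformation. So by Observation~\ref{obs:LinearTrans}\ldots''), and your plan simply spells out that this transformation is block-diagonal across $\beta$ with unitriangular partial-sum blocks, hence invertible. One small inaccuracy: stars $S(\beta,\cdot)$ and $S(\beta',\cdot)$ for distinct $\beta,\beta'\in B\setminus B_\#$ are \emph{not} disjoint as sets of potential edges (they can share $\{\beta,\beta'\}$), but this is immaterial --- the reason you may ``work one chain at a time'' is that $T$ is block-diagonal by construction (each $Q_t(\beta)$ is defined only from the $\beta$-chain), not that the underlying edge sets are disjoint, and Observation~\ref{obs:LinearTrans} makes no disjointness assumption anyway.
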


To simplify and categorize the set of unrevealed potential edges, we make the following two definitions by considering restrictions of the stars in $\cI_B^P|_{\binom{B}{2}}$ and $\cQ$ in $\binom{B}{2}\setminus \lpr{\Sigma_B \cup \binom{A_B}{2}}$.
\begin{definition}\label{def:sigma012}
    Recall that 
\begin{align*}
    & B^P=\{\beta_1,\dots,\beta_\eta\}\subseteq D_B^P, &&B^Q=\{\beta_{i_B-\eta+1},\dots,\beta_{i_B}\}\subseteq D_B^Q,\\
    &T_B^P = (V_B\cup B^Q)\setminus (D_B^P\cup B_{\#}),  &&T_B^Q = V_B\setminus A_B=V_B\setminus(B_{\#}\cup D_B^P\cup  D_B^Q).
\end{align*}
Set
\[\Sigma_0:=\binom{V_B}{2}\setminus \binom{A_B}{2}, \quad \Sigma_1:=S(B^P,T_B^P), \quad \Sigma_2:=S(B^Q,T_B^Q)\]
so that we have the partition
\[\binom{B}{2}\setminus \lpr{\Sigma_B \cup \binom{A_B}{2}}=\Sigma_0\sqcup \Sigma_1\sqcup \Sigma_2.\] 
\end{definition}

\begin{definition}\label{def:PQQQ}
    We define the following sets
\begin{align}
    \mathcal{P}&:=\{P(b)\cap(\Sigma_0\cup\Sigma_1\cup\Sigma_2)\mid b\in T_B^P\},\label{eq:def-P}\\
\mathcal{Q}_0&:=\{Q_0(\beta)\cap(\Sigma_0\cup\Sigma_1\cup\Sigma_2)\mid \beta\in T_B^Q\}=\{S(\beta,V_B\cup B^Q)\mid \beta\in T_B^Q\},\label{eq:def-Q0}\\
\mathcal{Q}_1&:=\{Q_0(\beta)\cap(\Sigma_0\cup\Sigma_1\cup\Sigma_2)\mid \beta\in B^P\}=\{S(\beta,T_B^P)\mid \beta\in B^P\},\label{eq:def-Q1}\\
\mathcal{Q}_2&:=\{Q_0(\beta)\cap(\Sigma_0\cup\Sigma_1\cup\Sigma_2)\mid \beta\in B^Q\}=\{S(\beta,T_B^Q)\mid \beta\in B^Q\}.\label{eq:def-Q2}
\end{align}
\end{definition}
\begin{remark}
    We check the equality in each of \cref{eq:def-Q0}--\cref{eq:def-Q2} here. Recall from the definition that $Q_0(\beta)=S(\beta,B\setminus\{\beta_1,\dots,\beta_{m(\beta)-1}\})$.

    We first check \cref{eq:def-Q0}. Note that if $\beta\in T_B^Q$, then $m(\beta)\leq i_B-\eta$ since $\beta\notin B_{i_B-\eta+1}\cup\dots\cup B_{i_B}$. Therefore, $B\setminus\{\beta_1,\dots,\beta_{m(\beta)-1}\}$ contains $V_B\cup B^Q$. Thus, we have $Q_0(\beta)\cap \Sigma_0=S(\beta,V_B)$ and $Q_0(\beta)\cap \Sigma_2=S(\beta,B^Q)$. Since $B^P\subseteq \{\beta_1,\dots,\beta_{m(\beta)-1}\}$, we know that neither $\beta$ nor any leaf of $Q_0(\beta)$ can be in $B^P$. Thus, $Q_0(\beta)\cap \Sigma_1=\varnothing$, and hence the equality in \cref{eq:def-Q0} follows. 

    Next, we show \cref{eq:def-Q1}. For $\beta\in B^P$, it is clear that $Q_0(\beta)\cap \Sigma_1=S(\beta,T_B^P)$. Note that $\beta\notin V_B\cup B^Q\cup T_B^Q$ since $B^P$ is disjoint from $V_B\cup B^Q\cup T_B^Q$. Thus, we have $Q_0(\beta)\cap (\Sigma_0\cup \Sigma_2)=\varnothing$.

    Finally, \cref{eq:def-Q2} follows similarly. For $\beta\in B^Q$, it is clear that $Q_0(\beta)\cap \Sigma_2=S(\beta,T_B^Q)$. Note that $\beta\notin V_B\cup B^P\cup T_B^P$ since $B^Q$ is disjoint from $V_B\cup B^P\cup T_B^P$. Thus, we have $Q_0(\beta)\cap (\Sigma_0\cup \Sigma_1)=\varnothing$.
\end{remark}

We will show that among the parities in $\parity(\cI_B^P|_{\binom{B}{2}})$ and $\parity(\mathcal{Q})$, only those in $\parity(\mathcal{P})$, $\parity(\mathcal{Q}_0)$, $\parity(\mathcal{Q}_1)$, $\parity(\mathcal{Q}_2)$ remain unrevealed due to revealing the subgraphs $G[\Sigma_B]$ and $G[A_B]$.

\begin{claim}\label{claim:stars}
We have
    \begin{align}\label{eq:claim-1}
    &(G[V_B]\mid \parity\lpr{\cI_B^P|_{\binom{B}{2}}},\parity(\mathcal{Q}), G[A_B],\parity(\mathcal{J}_{A_B}),\parity(V_B),G[\Sigma_B])\notag\\
    &=\lpr{G[V_B]\mid \parity(\mathcal{P}),\parity(\mathcal{Q}_0),\parity(\mathcal{Q}_1),\parity(\mathcal{Q}_2), G[A_B],\parity(\mathcal{J}_{A_B}),\parity(V_B),G[\Sigma_B]}.
\end{align}
\end{claim}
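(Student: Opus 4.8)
The first equality in \eqref{eq:claim-1} is nothing more than the definition of $\Gamma_B$, combined with \cref{obs:IBRtoQ} (which lets us replace $\parity(\cI_B^Q|_{\binom{B}{2}})$ by $\parity(\mathcal Q)$ without altering the conditional law of $G[V_B]$); so the content lies entirely in the second equality. The plan is to recast it as a finite $\FF_2$-linear-algebra identity on the set $\Sigma:=\Sigma_0\sqcup\Sigma_1\sqcup\Sigma_2$ of potential edges of $\binom{B}{2}$ that are still unexposed once we reveal $G[A_B]$ and $G[\Sigma_B]$.

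First I would observe that, conditioned on $G[A_B]$ and $G[\Sigma_B]$, for any $S\subseteq\binom{B}{2}$ the parity $\parity(S)$ is determined by $\parity(S\cap\Sigma)$ together with $G[A_B],G[\Sigma_B]$, since $S\setminus\Sigma\subseteq\Sigma_B\cup\binom{A_B}{2}$. Applying this to each $P(b)\in\cI_B^P|_{\binom{B}{2}}$ (\cref{obs:stars-in-P}) and each $Q_t(\beta)\in\mathcal Q$, and recording that $S(a,V_B)\cap\Sigma=S(a,T_B^Q)$ for $a\in A_B$ while $\binom{V_B}{2}\cap\Sigma=\Sigma_0$, the second equality in \eqref{eq:claim-1} becomes the claim that, with $L:=\mathrm{span}_{\FF_2}\big(\{\mathbbm{1}_{S(a,T_B^Q)}:a\in A_B\}\cup\{\mathbbm{1}_{\Sigma_0}\}\big)$ denoting the constraints contributed by $\parity(\mathcal J_{A_B})$, $\parity(V_B)$ and $G[A_B]$,
\begin{multline*}
\mathrm{span}\{\mathbbm{1}_{P(b)\cap\Sigma}:b\in B\}+\mathrm{span}\{\mathbbm{1}_{Q_t(\beta)\cap\Sigma}\}+L\\
=\mathrm{span}\{\mathbbm{1}_{S}:S\in\mathcal P\cup\mathcal Q_0\cup\mathcal Q_1\cup\mathcal Q_2\}+L .
\end{multline*}
The inclusion ``$\supseteq$'' is immediate from \cref{def:PQQQ}, as each element of $\mathcal P,\mathcal Q_0,\mathcal Q_1,\mathcal Q_2$ is literally $P(b)\cap\Sigma$ or $Q_0(\beta)\cap\Sigma$ with $b\in T_B^P\subseteq B$ or $\beta\in T_B^Q\cup B^P\cup B^Q\subseteq B\setminus B_\#$.

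For ``$\subseteq$'' I would argue generator by generator, organizing by which block contains the center (and, for the $Q_t$, by the value of $t$). For the stars $P(b)$ this is clean: if $b\notin T_B^P$, then $b$ lies in $B_\#$, in $D_B^P$, or in $F_0\setminus(B^Q\cup D_B^P)$, and in every case $P(b)\cap\Sigma_0=P(b)\cap\Sigma_1=P(b)\cap\Sigma_2=\varnothing$. Here one uses that a $(U,W)$-removal never removes a $W_\#$-vertex, so $B_\#\subseteq V_B$ and hence $B_\#\subseteq A_B$, and that $\{b_{i+1},\dots,b_{\abs B}\}\subseteq D_B^P\cup B_\#$ whenever $b\in D_B^P$ — which holds precisely because $D_B^P$ and $B_\#$ are the top blocks in the chosen order. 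Likewise the stars $Q_0(\beta)$ are benign: for $\beta\in B\setminus B_\#$ one finds $Q_0(\beta)\cap\Sigma$ equals $S(\beta,V_B\cup B^Q)\in\mathcal Q_0$, $S(\beta,T_B^P)\in\mathcal Q_1$, or $S(\beta,T_B^Q)\in\mathcal Q_2$ when $\beta\in T_B^Q,B^P,B^Q$ respectively, equals $S(\beta,T_B^Q)\in L$ when $\beta\in A_B$, and is empty otherwise.

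The remaining work — and the main obstacle — is the stars $Q_t(\beta)$ with $t\ge 1$. Such a star has the form $S\big(\beta,\{\beta_{m-ts},\dots,\beta_{m-(t-1)s-1}\}\big)$, a star from $\beta$ into a block of $s$ consecutive removed vertices, so it meets $\Sigma_0$ not at all and meets $\Sigma$ only through $\Sigma_1\cup\Sigma_2$; a size count shows that almost every $t$ contributes nothing, the delicate point being a window of indices around $t=\lfloor(m-1)/s\rfloor$ (and, for $\Sigma_2$, around $t=1$) where the target block overlaps $B^P$ or $B^Q$, producing partial stars $S(\beta,\{\beta_k,\dots,\beta_\eta\})$ that must be shown to lie in the span already built from $\mathcal P$, $\mathcal Q_0,\mathcal Q_1,\mathcal Q_2$ and $L$ — typically by passing to consecutive differences of the $Q_t$'s, much as $Q'_t$ was reparametrized by the $Q_t$ in \cref{lem:UWlayered}. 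Making this precise forces one to track carefully how the decreasing block order interacts with the definitions of $D_B^P,D_B^Q,B^P,B^Q,T_B^P,T_B^Q$, in particular ruling out the overlaps that can occur when $D_B^P\cap D_B^Q\neq\varnothing$ or when $i_B'=i_B\bmod s$ is small (so that $B^Q$ and $D_B^Q$ wrap around the block indices); this is where the hypotheses $\eta\ll s$, $\eta\ll\abs{B_\#}$ and $i_B-\eta+1>2s$ are used. The linear-algebraic skeleton is routine; it is this last piece of bookkeeping that demands care.
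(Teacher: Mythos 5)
Your framing is essentially the same as the paper's: after revealing $G[A_B]$ and $G[\Sigma_B]$, the parity of any $S\subseteq\binom{B}{2}$ is determined by $\parity(S\cap\Sigma)$ where $\Sigma=\Sigma_0\sqcup\Sigma_1\sqcup\Sigma_2$, and one then classifies which restricted stars remain informative. Your handling of the $P(b)$ and $Q_0(\beta)$ cases matches the paper's items (i), (iii), (iv), and the absorption of $Q_0(\beta)\cap\Sigma$ for $\beta\in A_B$ into the constraints from $\parity(\cJ_{A_B})$ and $G[A_B]$ is exactly their (iv).

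However, the proposal has a genuine gap at precisely the step you flag as ``the main obstacle'': you leave the stars $Q_t(\beta)$ with $t\geq 1$ unresolved, and in fact you misread the situation. You anticipate a ``window of indices'' for $t$ near $\lfloor(m-1)/s\rfloor$ or near $1$ producing ``partial stars $S(\beta,\{\beta_k,\dots,\beta_\eta\})$'' that would then have to be absorbed by an $\FF_2$-linear bookkeeping argument. But no such partial stars arise: $Q_t(\beta)\cap\Sigma=\varnothing$ for all $\beta$ and all $t\geq 1$, so these parities are \emph{entirely} determined by $G[\Sigma_B]$ and $G[A_B]$ and drop out without any linear algebra. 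The paper's argument is short: every edge of $Q_t(\beta)$ with $t\geq 1$ goes from $\beta$ to a removed vertex $u\notin V_B$, so an unrevealed such edge would force $u\in B^P\cup B^Q$. If $u\in B^P$, then since all of $\binom{D_B^P\cup B_\#}{2}$ is revealed we must have $\beta\in\bigcup_{j>\eta}B_j$; but for such $\beta$ one has $m(\beta)\equiv j\pmod s$ with $j>\eta$, so $Q_t'(\beta)=S(\beta,B\setminus\{\beta_1,\dots,\beta_{m(\beta)-ts-1}\})$ always excludes $\beta_1,\dots,\beta_\eta$ and hence $Q_t(\beta)$ never touches $B^P$. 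If $u\in B^Q$, then since all of $\binom{B^Q\cup A_B}{2}$ is revealed we must have $\beta\notin B^Q\cup A_B$; this forces $m(\beta)\leq i_B-\eta$, hence $Q_0'(\beta)\supseteq S(\beta,B^Q)$, and since $Q_t(\beta)$ for $t\geq 1$ is disjoint from $Q_0'(\beta)$ it cannot touch $B^Q$ either. The hypotheses $\eta\ll s$ and $i_B-\eta+1>2s$ are indeed used, but only to ensure $B^P\cap B^Q=\varnothing$ and to make the indices $m(\beta)$ well-defined and large enough — not to control a ``window'' of partially-surviving stars. Until you supply this vanishing argument, the ``$\subseteq$'' direction of your span identity is not established, so the proposal as written does not yet prove the claim.
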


\begin{proof}
We start with three initial observations about which  potential edge sets are revealed/unrevealed by $G[A_B]$ and $G[\Sigma_B]$. Each observation can be checked by going through the definitions in \cref{subsec:intro} and \cref{subsec:revealed-info}.
\begin{itemize}
    \item[(a)] All the unrevealed potential edges (i.e., those in $\binom{B}{2}\setminus (\Sigma_B \cup \binom{A_B}{2})$) lie in $\binom{V_B\cup B^P\cup B^Q}{2}$.
    \item[(b)] All the potential edges in $\binom{D_B^P\cup B_{\#}}{2}$ are revealed.

     Since  $\binom{B}{2}\setminus\Bigl(\Sigma_B\cup \binom{A_B}{2}\Bigr)
=\Sigma_0\cup\Sigma_1\cup\Sigma_2$, any edge
$uv\in \binom{D_B^P\cup B_{\#}}{2}$ can be unrevealed only if it lies in
$\Sigma_0\cup\Sigma_1\cup\Sigma_2$. Since $(D_B^P\cup B_{\#})\cap V_B\subseteq A_B$, we know that if $uv\in\binom{V_B}{2}$ then $uv\in\binom{A_B}{2}$. Thus, we cannot have $uv\in \Sigma_0$. Also, we know that $T_B^P$ is disjoint from $D_B^P\cup B_{\#}$, so we cannot have $uv\in \Sigma_1$. Similarly, $T_B^Q$ is also disjoint from $D_B^P\cup B_{\#}$, so we cannot have $uv\in \Sigma_2$. Therefore, all the potential edges in $\binom{D_B^P\cup B_{\#}}{2}$ must be revealed.
    \item[(c)] All the potential edges in $\binom{B^Q\cup A_B}{2}$ are revealed.

    By contradiction, suppose $uv\in \binom{B^Q\cup A_B}{2}$ is unrevealed. Since $\binom{A_B}{2}$ is revealed, we cannot have both $u,v\in A_B$. Without loss of generality, suppose  $u\in B^Q$ (in particular, $u\notin V_B$ and $u\notin B^P$), so by \cref{eq:unrevealed-edges} we must have $u\in B^Q$, $v\in T_B^Q$. This is impossible because   $T_B^Q$ is disjoint from $B^Q\cup A_B$.
\end{itemize}

With these observations, we are able to identify the subcollections of $\parity\lpr{\cI_B^P|_{\binom{B}{2}}},\parity(\mathcal{Q})$ that are revealed by $G[\Sigma_B]$ and $G[A_B]$.
    \begin{itemize}
        \item[(i)] If $b\in B$ and $b\notin T_B^P$, then $P(b)$ is revealed. 
        
        By (a) above, $P(b)$ is revealed for all $b\notin V_B\cup B^P\cup B^Q$. If $b\in (V_B\cup B^P\cup B^Q)\setminus T_B^P$, then since $B^P\subseteq D_B^P$ and  $(V_B\cup B^Q)\setminus T_B^P=(V_B\cup B^Q)\cap (D_B^P\cup B_\#)$, we have $b\in (D_B^P\cup B_\#)$. Since $D_B^P\cup B_\#$ are the $\eta+1$ sets in the partition that contain those $b\in B$ with the largest indices in the ordering $b_1,\dots, b_{\abs{B}}$, we get that $P(b)\subseteq \binom{D_B^P\cup B_\#}{2}$. Thus, by (b) above, $P(b)$ is revealed.

        \item[(ii)] All stars of the form $Q_t(\beta)$, $t\geq 1$ are revealed.

        Consider any star $Q_t(\beta)$ with $t\geq 1$. We know from the definition $Q_t(\beta)=Q_t'(\beta)\setminus Q'_{t-1}(\beta)$ that all edges sets in $Q_t(\beta)$ are of the form $\beta u$ with  $u\notin V_B$. By contradiction, suppose one such $\beta u$ is not revealed. Since $u\notin V_B$, by  (a) above, we know that $u\in B^P\cup B^Q$. 
        
        Recall that $B^P=\{\beta_1,\dots,\beta_\eta\}$ are the first  $\eta$ vertices removed from $B$ during the $(C,B)$-removal, and $B^Q=\{\beta_{i_B-\eta+1},\dots,\beta_{i_B}\}$ are the last $\eta$  vertices removed from $B$ during the $(C,B)$-removal.
If $u\in B^P \subseteq D_B^P$, then we know from (b) above that $\beta\notin D_B^P\cup B_\#$, which means that $\beta\in\bigcup_{i=\eta+1}^sB_i$. However, for all $\beta\in\bigcup_{i=\eta+1}^sB_i$, the star $Q_t'(\beta)$ for any $t$ does not contain the edges $\beta \beta_1,\dots,\beta\beta_\eta$ (see \cref{obs:QlocationF}), so it is impossible that $u\in B^P$. This is a contradiction.
        
        If $u\in B^Q$, then we know from (c) above that $\beta\notin B^Q\cup A_B$. In particular, $\beta$ does not lie in $B^Q\cup (V_B\cap D_B^Q)$, which means that $Q_0'(\beta)$ always contains the star $S(\beta,B^Q)$. This implies that $Q_t(\beta)\subseteq (Q_t'(\beta)\setminus Q_0'(\beta))$ is disjoint from $S(\beta,B^Q)$, contradicting the assumption that $u\in B^Q$.

        \item[(iii)] If $\beta\in B\setminus(V_B\cup B^P\cup B^Q)$, then by (a), the star $Q_0(\beta)$ is revealed.

        \item[(iv)] If $\beta\in A_B$, then $\parity(Q_0(\beta))$ is  revealed.

        Recall that for all $\beta\in B$, the star $Q_0(\beta)=Q_0'(\beta)=S(\beta,B\setminus\{\beta_1,\dots,\beta_{m(\beta)}-1\})$ contains $S(\beta,V_B)$. Since $\parity(\beta,V_B)\in\parity(\mathcal{J}_{A_B})$, it suffices to show that $Q_0(\beta)\setminus S(\beta,V_B)$ is revealed. 

        Again, by contradiction, suppose $\beta u\in Q_0(\beta)\setminus S(\beta,V_B)$ is unrevealed. We already know that $u\notin V_B$; since $\beta\in A_B$, combining (a) and (c) above, we know that $u\in B^P$. However, for all $\beta\notin B^P$, the star $Q_0(\beta)=Q_0'(\beta)$ does not contain the edges $\beta\beta_1,\dots,\beta\beta_\eta$, so it is impossible that $u\in B^P$. This is a contradiction.
    \end{itemize}
    Combining {\rm(i)}--{\rm(iv)} above, we see that the stars in
$\cI_B^P|_{\binom{B}{2}}$ and $\mathcal{Q}$ 
other than
\begin{align*}
    \{P(b):b\in T_B^P\}\cup\{Q_0(\beta):\beta\in B^P\cup B^Q\cup (V_B\setminus A_B)\}
\end{align*}
have their parities already
determined by $G[A_B]$ and $G[\Sigma_B]$.
We also note that these stars are indeed in $\cI_B^P|_{\binom{B}{2}}\cup \mathcal{Q}$.
Thus, restricting these remaining stars to the unrevealed edge sets
$\Sigma_0\cup\Sigma_1\cup\Sigma_2$ yields exactly
$\mathcal P\cup \mathcal Q_0\cup \mathcal Q_1\cup \mathcal Q_2$. Thus, we can drop the information that is determined by $G[A_B],G[\Sigma_B]$ and get
\begin{align*}
    &(G[V_B]\mid \parity\lpr{\cI_B^P|_{\binom{B}{2}}},\parity(\mathcal{Q}), G[A_B],\parity(\mathcal{J}_{A_B}),\parity(V_B),G[\Sigma_B])\\
    &=\lpr{G[V_B]\mid \parity(\mathcal{P}),\parity(\mathcal{Q}_0),\parity(\mathcal{Q}_1),\parity(\mathcal{Q}_2), G[A_B],\parity(\mathcal{J}_{A_B}),\parity(V_B),G[\Sigma_B]}.
\end{align*}
as desired.
\end{proof}

The following claim shows that up to $e^{-\Theta(n^{1/2-\alpha})}$-closeness we can remove the influence of conditioning on $\parity(\cP)$ and $\parity(\cQ_1)$ from the right-hand side of \cref{eq:claim-1}.

\begin{claim}\label{claim:drop-P-Q1} 
\begin{multline*}
    (G[V_B]\mid \parity(\mathcal{P}),\parity(\mathcal{Q}_0),\parity(\mathcal{Q}_1),\parity(\mathcal{Q}_2), G[A_B],\parity(\mathcal{J}_{A_B}),\parity(V_B),G[\Sigma_B]) \\
    \text{is $e^{-\Theta(n^{1/2-\alpha})}$-close to}\\
    (G[V_B]\mid \parity(\mathcal{Q}_0),\parity(\mathcal{Q}_2), G[A_B],\parity(\mathcal{J}_{A_B}),\parity(V_B),G[\Sigma_B]).
\end{multline*}
\end{claim}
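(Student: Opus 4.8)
The plan is to show that conditioning on $\parity(\mathcal{P})$ and $\parity(\mathcal{Q}_1)$ is, up to $e^{-\Theta(n^{1/2-\alpha})}$, the same as conditioning only on their total parity, and that this total parity is already determined by the conditioning on the right-hand side. Throughout, recall that under the conventions of this subsection $G$ is a fresh $p$-random graph on $V$ and all the vertex sets $A_B,B^P,B^Q,T_B^P,T_B^Q,\Sigma_B$ are deterministic. Write $\mathcal{R}:=\bigl(\parity(\mathcal{Q}_0),\parity(\mathcal{Q}_2),G[A_B],\parity(\mathcal{J}_{A_B}),\parity(V_B),G[\Sigma_B]\bigr)$ for the conditioning on the right-hand side, and set $\vec Y:=(\parity(\mathcal{P}),\parity(\mathcal{Q}_1))$. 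The first step is the bookkeeping claim that none of the potential-edge sets occurring in $\mathcal{R}$ meets $\Sigma_1=S(B^P,T_B^P)$. This is checked from \cref{def:sigma012,def:PQQQ} and the definitions of $B^P,T_B^P,\Sigma_B$: each set of $\mathcal{J}_{A_B}$ and $\parity(V_B)$ lies inside $\binom{V_B}{2}$, each set of $\mathcal{Q}_0$ and $\mathcal{Q}_2$ lies inside $\Sigma_0\cup\Sigma_2$, and $G[A_B]$ as well as $G[\Sigma_B]$ reveal only potential edges outside $\Sigma_0\cup\Sigma_1\cup\Sigma_2$; none of these meets $\Sigma_1$ because every edge of $\Sigma_1$ has an endpoint in $B^P$, which is disjoint from $V_B$ and from $B^Q$. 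Consequently, after conditioning on $\mathcal{R}$ the potential edges of $\Sigma_1$ are still independent $p$-coins, independent of all the edges of $\Sigma_0\cup\Sigma_2$.

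Now I would work inside the probability space conditioned on $\mathcal{R}$ and apply \cref{lem:evensumepsUniform} with $\vec X$ the indicator vector of the edges in $\Sigma_0\cup\Sigma_2$, $\vec X'$ the indicator vector of the edges in $\Sigma_1$ (independent of $\vec X$ by the previous paragraph), and $\vec Y$ as above. Restricting $\vec Y$ to the coordinates lying in $\Sigma_1$: for $\beta\in B^P$ the whole star $S(\beta,T_B^P)$ lies in $\Sigma_1$, and for $b\in T_B^P$ one has $P(b)\cap\Sigma_1=S(b,B^P)$, since every vertex of $B^P\subseteq D_B^P$ carries a larger index than $b\in T_B^P\subseteq B\setminus(D_B^P\cup B_\#)$. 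Hence the vector $\vec Z$ of \cref{lem:evensumepsUniform} is precisely the list of all $|B^P|+|T_B^P|$ degree parities of the random bipartite graph $G[S(B^P,T_B^P)]$ with parts $B^P$ (of size $\eta=\Theta(n^{1/2-\alpha})$) and $T_B^P$ (of size $n/4\pm o(n)$). As $\eta$ is large and $n/4+o(n)\le e^{\eta(p^*)^2/10}$ for $n$ large, \cref{lem:bipartite} shows that $\vec Z$ is fix-parity $e^{-\Theta(n^{1/2-\alpha})}$-uniform, with parity sum $s=0$ (the degree parities of any bipartite graph sum to an even number). Therefore \cref{lem:evensumepsUniform} yields that, conditionally on $\mathcal{R}$, $(\vec X\mid\vec Y)$ is $e^{-\Theta(n^{1/2-\alpha})}$-close to $(\vec X\mid W)$, where $W$ is the mod-$2$ sum of all the parities listed in $\vec Y$.

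It remains to show that $W$ is determined by $\mathcal{R}$, so that $(\vec X\mid W)=(\vec X\mid\varnothing)=\vec X$ in the space conditioned on $\mathcal{R}$. The $\Sigma_1$-contributions to $W$ cancel: $\sum_{\beta\in B^P}\parity(S(\beta,T_B^P))=\parity(\Sigma_1)=\sum_{b\in T_B^P}\parity(S(b,B^P))$, because the stars $\{S(\beta,T_B^P)\}_{\beta\in B^P}$ and the stars $\{S(b,B^P)\}_{b\in T_B^P}$ each partition $\Sigma_1$. Thus $W\equiv\sum_{b\in T_B^P}\parity\bigl(P(b)\cap\Sigma_0\bigr)+\sum_{b\in T_B^P}\parity\bigl(P(b)\cap\Sigma_2\bigr)\pmod 2$. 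For each $i\in\{0,2\}$ the sets $\{P(b)\cap\Sigma_i\}_{b\in T_B^P}$ are pairwise disjoint and cover $\Sigma_i$ — one verifies that every edge of $\Sigma_0$ has its lower-indexed endpoint in $T_B^P$ (using $V_B\setminus A_B\subseteq B\setminus(D_B^P\cup B_\#)$ and that $D_B^P\cup B_\#$ carries the largest indices, so that any edge of $\binom{V_B}{2}$ with its lower-indexed endpoint outside $T_B^P$ lies in $\binom{A_B}{2}$), and likewise every edge of $\Sigma_2=S(B^Q,T_B^Q)$ has an endpoint in $T_B^P$ (the non-generic case $D_B^Q\cap(D_B^P\cup B_\#)\neq\varnothing$ being handled by the convention that $T_B^P$ omits that intersection). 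Hence the two sums equal $\parity(\Sigma_0)$ and $\parity(\Sigma_2)$ respectively; and $\parity(\Sigma_0)=\parity\bigl(\binom{V_B}{2}\bigr)+\parity\bigl(\binom{A_B}{2}\bigr)$ is determined by $\parity(V_B)$ and $G[A_B]\in\mathcal{R}$, while $\parity(\Sigma_2)=\sum_{\beta\in B^Q}\parity(S(\beta,T_B^Q))$ is determined by $\parity(\mathcal{Q}_2)\in\mathcal{R}$. So $W$ is a fixed value given $\mathcal{R}$; translating back, $(G[V_B]\mid\vec Y,\mathcal{R})$ is $e^{-\Theta(n^{1/2-\alpha})}$-close to $(G[V_B]\mid\mathcal{R})$, which is the claim (here we use $G[V_B]=G[A_B]\sqcup G[\Sigma_0]$ with $G[A_B]$ part of $\mathcal{R}$, $G[\Sigma_0]$ a coordinate projection of $\vec X$, and that $\eps$-closeness is preserved under pushforward). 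The main work — everything conceptual being a direct application of \cref{lem:evensumepsUniform,lem:bipartite} — lies in the bookkeeping of the first and last paragraphs: confirming that $\mathcal{R}$ avoids $\Sigma_1$, and that the families $\{P(b)\cap\Sigma_i\}_{b\in T_B^P}$ genuinely partition $\Sigma_i$, including the degenerate overlap case.
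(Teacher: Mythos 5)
Your proof is correct and follows essentially the same approach as the paper: you establish that $G[\Sigma_1]$ is $p$-random after conditioning on $\mathcal{R}$, identify $\vec Z$ as the degree-parity vector of the bipartite graph on $B^P\sqcup T_B^P$ so that \cref{lem:bipartite} applies, invoke \cref{lem:evensumepsUniform} with $\vec X = \Sigma_0\cup\Sigma_2$, $\vec X' = \Sigma_1$, and finally argue that the residual parity $W$ is already determined by $\mathcal{R}$. The only cosmetic difference is in the last step: the paper computes $\sum_{S\in\mathcal P}\parity(S)=\parity(\Sigma_0\cup\Sigma_1\cup\Sigma_2)$ at one stroke by noting that $\{P(b)\}_{b\in B}$ partitions $\binom{B}{2}$ and the stars with $b\notin T_B^P$ miss $\Sigma_0\cup\Sigma_1\cup\Sigma_2$ entirely, whereas you verify separately for each $i\in\{0,2\}$ that $\{P(b)\cap\Sigma_i\}_{b\in T_B^P}$ partitions $\Sigma_i$ by tracking lower-indexed endpoints; both routes land on $W\equiv\parity(\Sigma_0)+\parity(\Sigma_2)\pmod 2$ and are equally valid.
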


\begin{proof}
Recall that $\Sigma_1=S(B^P,T_B^P)$. We start by showing that the bipartite graph 
\[
(G[\Sigma_1]\mid \parity(\mathcal{Q}_0),\parity(\mathcal{Q}_2), G[A_B],\parity(\mathcal{J}_{A_B}),\parity(V_B),G[\Sigma_B])
\]
is $p$-random. To show this, we verify that the random graph $G[\Sigma_1]$ is independent from all the random variables $\parity(\mathcal{Q}_0)$, $\parity(\mathcal{Q}_2), G[A_B]$, $\parity(\mathcal{J}_{A_B})$, $\parity(V_B)$, $G[\Sigma_B]$.
\begin{itemize}
    \item Edges in $G[A_B]$, $\mathcal{J}_{A_B}$, $\binom{V_B}{2}$ all lie in $\binom{V_B}{2}$, which is disjoint from $\Sigma_1=S(B^P,T_B^P)$.
    \item We know from \cref{def:sigma012} that $\Sigma_1$ is disjoint from $\Sigma_B$.
    \item Consider any edge in some star in $\mathcal Q_0$, which is of the form $\beta u$ with $\beta\in T_B^Q$ and $u\in V_B\cup B^Q$ (see \cref{eq:def-Q0}). This implies that $\beta u\in\binom{V_B\cup B^Q}{2}$, which is disjoint from $\Sigma_1$ (as $V_B\cup B^Q$ and $B^P$ are disjoint).
    \item Consider any edge in some star in $\mathcal Q_2$, which is of the form $\beta u$ with $\beta\in B^Q$ and $u\in T_B^Q$ (see \cref{eq:def-Q2}). This also implies that $\beta u\in\binom{V_B\cup B^Q}{2}$, so we know that $\beta u$ does not lie in $\Sigma_1$.
\end{itemize}
Thus, we know that $(G[\Sigma_1]\mid \parity(\mathcal{Q}_0),\parity(\mathcal{Q}_2), G[A_B],\parity(\mathcal{J}_{A_B}),\parity(V_B),G[\Sigma_B])$ is a $p$-random graph. Note that $P(b)\cap \Sigma_1=S(b,B^P)$ if $b\in T_B^P$. This is because the index of $b$ is smaller than the index of any $b'\in B^P$ in the ordering $b_1,\dots,b_{\abs{B}}$. Thus, we have 
\[\mathcal{P}|_{\Sigma_1}=\{S(b,B^P)\mid b\in T_B^P\},\,\mathcal{Q}_1|_{\Sigma_1}=\{S(\beta,T_B^P)\mid \beta\in B^P\}.\] 
Therefore, we may apply \cref{lem:bipartite} and conclude that $\parity(\mathcal{P}|_{\Sigma_1})\cup \parity(\mathcal{Q}_1|_{\Sigma_1})$ is fix-parity $e^{-\Theta(n^{1/2-\alpha})}$-uniform.

Applying \cref{lem:evensumepsUniform} with the random variables $\vec{X}=\Sigma_0\cup\Sigma_2,\vec{X}'=\Sigma_1$, $\vec{Y}=\parity(\mathcal{P})\cup \parity(\mathcal{Q}_1)$, and $\vec{Z}=\parity(\mathcal{P}|_{\Sigma_1})\cup \parity(\cQ_1|_{\Sigma_1})$, we get that 
\begin{multline}
    (G[V_B]\mid \parity(\mathcal{P}),\parity(\mathcal{Q}_0),\parity(\mathcal{Q}_1),\parity(\mathcal{Q}_2), G[A_B],\parity(\mathcal{J}_{A_B}),\parity(V_B),G[\Sigma_B]) \\
    \text{is $e^{-\Theta(n^{1/2-\alpha})}$-close to} \tag{$*$}\\
    (G[V_B]\mid \sum_{S\in \mathcal{P}\cup \mathcal{Q}_1}\parity(S) \text{ (mod } 2), \parity(\mathcal{Q}_0),\parity(\mathcal{Q}_2), G[A_B],\parity(\mathcal{J}_{A_B}),\parity(V_B),G[\Sigma_B]).
\end{multline}

To deduce \cref{claim:drop-P-Q1}, we further remove the parity sum information $\sum_{S\in \mathcal{P}\cup \mathcal{Q}_1}\parity(S)\pmod 2$ via the following observation.

\begin{observation}\label{obs:parity-P-Q1}
    The parity $\sum_{S\in \mathcal{P}\cup \mathcal{Q}_1}\parity(S)\pmod{2}$ is the same as $\parity(\Sigma_0\cup \Sigma_2)$, which is determined by $\parity(\mathcal{Q}_2),\parity(V_B),G[A_B]$.
\end{observation}
\begin{proof*}
    Recall that $\mathcal{P}$ is obtained from $\mathcal{I}_B^P|_{\binom{B}{2}}$ by removing the stars whose parities are revealed, and removing the potential edges that are revealed from the remaining stars. Thus, 
    \[\sum_{S\in \mathcal{P}}\parity(S)\equiv \sum_{S\in \mathcal{I}_B^P|_{\binom{B}{2}}}\parity(S\cap \lpr{\Sigma_0\cup\Sigma_1\cup\Sigma_2})\pmod{2}.\]
    Note that the stars in $\mathcal{I}_B^P|_{\binom{B}{2}}$ are all disjoint, and their union is $\binom{B}{2}$. Thus, 
    \[\sum_{S\in \mathcal{P}}\parity(S)\equiv\parity(\Sigma_0\cup\Sigma_1\cup\Sigma_2)\pmod{2}.\]
    On the other hand, we know that
    \[\sum_{S\in \mathcal{Q}}\parity(S)\equiv\parity(\Sigma_1)\pmod{2},\]
    and hence
    \[\sum_{S\in \mathcal{P}\cup\mathcal{Q}}\parity(S)\equiv\parity(\Sigma_0\cup\Sigma_2)\pmod{2}.\]
    Note that 
    \[\parity(\Sigma_2)\equiv\sum_{S\in\mathcal{Q}_2}\parity(S)\pmod{2} ,\]
    \[\parity(\Sigma_0)\equiv\parity(V_B)-\parity(A_B)\pmod{2}.\]
    Hence, $\parity(\Sigma_0\cup \Sigma_2)\equiv\parity(\Sigma_0)+\parity(\Sigma_2)\pmod{2}$ is determined by $\parity(\mathcal{Q}_2),\parity(V_B),G[A_B]$.
\end{proof*}

Combining \cref{obs:parity-P-Q1} with $(*)$ gives the claim.
\end{proof}

Then, the following claim shows that we can further remove up to $e^{-\Theta(n^{1/2-\alpha})}$-closeness the influence of conditioning on $\parity(\cQ_0)$ and $\parity(\cQ_2)$. The argument is similar to the proof of \cref{claim:drop-P-Q1}.

\begin{claim}\label{claim:drop-Q0-Q2}
\begin{multline*}
(G[V_B]\mid \parity(\mathcal{Q}_0),\parity(\mathcal{Q}_2), G[A_B],\parity(\mathcal{J}_{A_B}),\parity(V_B),G[\Sigma_B])\\
\text{is $e^{-\Theta(n^{1/2-\alpha})}$-close to}\\
(G[V_B]\mid  G[A_B],\parity(\mathcal{J}_{A_B}),\parity(V_B),G[\Sigma_B]).
\end{multline*}
\end{claim}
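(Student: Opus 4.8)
The plan is to follow the template of the proof of \cref{claim:drop-P-Q1}, this time using the potential edges of $\Sigma_2 = S(B^Q, T_B^Q)$ as the reservoir of fresh randomness that ``hides'' the conditioning on $\parity(\mathcal{Q}_0)$ and $\parity(\mathcal{Q}_2)$. Write $\Lambda$ for the tuple $\bigl(G[A_B],\, \parity(\mathcal{J}_{A_B}),\, \parity(V_B),\, G[\Sigma_B]\bigr)$ of conditions we want to keep. First I would observe that $(G[\Sigma_2] \mid \Lambda)$ is $p$-random and independent of $G[\Sigma_0]$: every potential edge set recorded in $\Lambda$ lies in $\binom{V_B}{2}$ (for $G[A_B]$, $\parity(\mathcal{J}_{A_B})$, $\parity(V_B)$) or in $\Sigma_B$ (for $G[\Sigma_B]$), and $\Sigma_2$ is disjoint from $\binom{V_B}{2}$ (its edges have an endpoint in $B^Q$, which is disjoint from $V_B$) and from $\Sigma_B$ (which by \cref{eq:sigma-B} explicitly excludes $S(B^Q, T_B^Q)$). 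Since $\Sigma_0 \subseteq \binom{V_B}{2}$, this also gives the independence of $\vec X := G[\Sigma_0]$ and $\vec X' := G[\Sigma_2]$ conditioned on $\Lambda$.

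Next I would unwind \cref{def:PQQQ}: using $B^Q \cap T_B^Q = \varnothing$ one gets $\mathcal{Q}_0|_{\Sigma_2} = \{S(\beta, B^Q) : \beta \in T_B^Q\}$ and $\mathcal{Q}_2|_{\Sigma_2} = \mathcal{Q}_2 = \{S(\beta, T_B^Q) : \beta \in B^Q\}$, so that $\parity(\mathcal{Q}_0|_{\Sigma_2}) \cup \parity(\mathcal{Q}_2|_{\Sigma_2})$ is exactly the vector of vertex-degree parities of the bipartite graph $G[\Sigma_2]$ with parts $T_B^Q$ and $B^Q$. These parts have sizes $\abs{T_B^Q} = \abs{V_B} - \abs{A_B} = \tfrac n4 - o(n)$ and $\abs{B^Q} = \eta = \Theta(n^{1/2-\alpha})$, so since $\eta \le \abs{T_B^Q} \le e^{\Theta(n^{1/2-\alpha})}$ for large $n$, \cref{lem:bipartite} (applied with its parameter equal to $\eta$) shows this vector is fix-parity $e^{-\Theta(n^{1/2-\alpha})}$-uniform. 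I would also note that $\mathcal{Q}_0$ and $\mathcal{Q}_2$ have all their potential edges inside $\Sigma_0 \sqcup \Sigma_2$: for $\beta \in T_B^Q$ one has $S(\beta, V_B) \subseteq \Sigma_0$ and $S(\beta, B^Q) \subseteq \Sigma_2$, for $\beta \in B^Q$ one has $S(\beta, T_B^Q) \subseteq \Sigma_2$, and in particular no edge of $\mathcal{Q}_0 \cup \mathcal{Q}_2$ meets $\Sigma_1 = S(B^P, T_B^P)$ since $B^P$ is disjoint from $V_B \cup B^Q$ and from $T_B^Q$. Hence $\vec Y := \parity(\mathcal{Q}_0) \cup \parity(\mathcal{Q}_2)$ is a function of $(\vec X, \vec X')$. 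Then \cref{lem:evensumepsUniform}, applied in the space conditioned on $\Lambda$ with $\vec X, \vec X', \vec Y$ as above and $\vec Z = \parity(\mathcal{Q}_0|_{\Sigma_2}) \cup \parity(\mathcal{Q}_2|_{\Sigma_2})$, shows that $(\vec X \mid \vec Y, \Lambda)$ is $e^{-\Theta(n^{1/2-\alpha})}$-close to $(\vec X \mid W, \Lambda)$, where $W := \sum_{S \in \mathcal{Q}_0 \cup \mathcal{Q}_2} \parity(S) \pmod 2$. Since $G[V_B]$ is a deterministic function of $G[A_B]$ and $\vec X$, the same closeness holds with $G[V_B]$ in place of $\vec X$.

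It remains to prove the analogue of \cref{obs:parity-P-Q1}: that $W$ is already determined by $\Lambda$. The stars $S(\beta, T_B^Q)$, $\beta \in B^Q$, are pairwise disjoint with union $S(B^Q, T_B^Q) = \Sigma_2$, so $\sum_{\beta \in B^Q} \parity(S(\beta, T_B^Q)) \equiv \parity(\Sigma_2) \pmod 2$; and in $\sum_{\beta \in T_B^Q} \parity(S(\beta, V_B \cup B^Q))$ each edge of $\binom{T_B^Q}{2}$ is counted twice while each edge of $S(A_B, T_B^Q)$ and each edge of $S(B^Q, T_B^Q) = \Sigma_2$ is counted once, so this sum is $\equiv \parity(S(A_B, T_B^Q)) + \parity(\Sigma_2) \pmod 2$. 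Adding, $W \equiv \parity(S(A_B, T_B^Q))$. Finally $\parity(S(A_B, T_B^Q)) \equiv \sum_{a \in A_B} \parity(S(a, V_B)) \pmod 2$ (edges inside $A_B$ are counted twice, and $V_B \setminus A_B = T_B^Q$), so $W$ is determined by $\parity(\mathcal{J}_{A_B})$, hence by $\Lambda$. Thus $(G[V_B] \mid W, \Lambda) = (G[V_B] \mid \Lambda)$, and chaining with the previous paragraph yields the claim.

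The step I expect to be the main obstacle is the careful bookkeeping underlying the first two paragraphs: verifying the disjointness of $\Sigma_2$ from the conditioned edge sets, and --- more delicately --- confirming that $\mathcal{Q}_0$ and $\mathcal{Q}_2$ restrict on $\Sigma_2$ to give exactly the bipartite degree-parity vector and avoid $\Sigma_1$. This requires chasing the definitions of $m(\beta)$, $B^P$, $B^Q$, $D_B^P$, $D_B^Q$ from \cref{subsec:intro,subsec:revealed-info} and using $i_B - \eta + 1 > 2s$ together with $s \gg \eta$ to pin down which removed vertices lie in $\{\beta_1, \dots, \beta_{m(\beta)-1}\}$; once this is settled, the probabilistic content is a single application of \cref{lem:bipartite} and \cref{lem:evensumepsUniform}, exactly as in \cref{claim:drop-P-Q1}.
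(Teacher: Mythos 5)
Your proof is correct and follows essentially the same route as the paper: condition on $\Lambda$, observe that $G[\Sigma_2]$ is $p$-random and independent of $G[\Sigma_0]$, apply \cref{lem:bipartite} to get that $\parity(\mathcal Q_0|_{\Sigma_2})\cup\parity(\mathcal Q_2|_{\Sigma_2})$ is fix-parity $e^{-\Theta(n^{1/2-\alpha})}$-uniform, apply \cref{lem:evensumepsUniform} with $\vec X=G[\Sigma_0]$, $\vec X'=G[\Sigma_2]$, and then show that the leftover parity sum $W$ is already determined by $\Lambda$. Your verification that $\mathcal Q_0\cup\mathcal Q_2$ lives in $\Sigma_0\sqcup\Sigma_2$ (so that $\vec Y$ is a function of $(\vec X,\vec X')$) is a check the paper applies implicitly, and your derivation that $W\equiv\parity(S(A_B,T_B^Q))\equiv\sum_{a\in A_B}\parity(S(a,V_B))\pmod 2$ is a slightly more streamlined version of the paper's \cref{obs:parity-Q0-Q2}, which routes through $\parity(a,T_B^Q)=\parity(a,V_B)-\parity(a,A_B)$; both yield the same conclusion.
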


\begin{proof}
Recall that $\Sigma_2=S(B^Q,T_B^Q)$.  Again, we start by showing that the bipartite graph 
\[
(G[\Sigma_2]\mid G[A_B], \parity(\mathcal{J}_{A_B}), \parity(V_B),G[\Sigma_B])
\]
is $p$-random. To show this, we verify that the random graph $G[\Sigma_2]$ is independent from all the random variables $G[A_B], \parity(\mathcal{J}_{A_B}), \parity(V_B),G[\Sigma_B]$.

\begin{itemize}
    \item Edges in $G[A_B]$, $\mathcal{J}_{A_B}$, $\binom{V_B}{2}$ all lie in $\binom{V_B}{2}$, which is disjoint from $\Sigma_2=S(B^Q,T_B^Q)$.
    \item We know from \cref{def:sigma012} that $\Sigma_2$ is disjoint from $\Sigma_B$.
\end{itemize}
Thus, we know that $(G[\Sigma_2]\mid G[A_B], \parity(\mathcal{J}_{A_B}), \parity(V_B),G[\Sigma_B])$ is a $p$-random graph. 
Note that
\[\mathcal{Q}_0|_{\Sigma_2}=\{S(\beta,B^Q)\mid \beta\in T_B^Q\},\,\mathcal{Q}_2|_{\Sigma_2}=\{S(\beta,T_B^Q)\mid \beta\in B^Q\}.\]
Therefore, we may apply \cref{lem:bipartite} and conclude that $\parity(\mathcal{Q}_0|_{\Sigma_2})\cup \parity(\mathcal{Q}_2|_{\Sigma_2})$ is fix-parity $e^{-\Theta(n^{1/2-\alpha})}$-uniform.

Applying \cref{lem:evensumepsUniform} with the random variables $\vec{X}=\Sigma_0,\vec{X}'=\Sigma_2$, $\vec{Y}=\parity(\mathcal{Q}_0)\cup \parity(\mathcal{Q}_2)$, and $\vec{Z}=\parity(\mathcal{Q}_0|_{\Sigma_2})\cup \parity(\cQ_2|_{\Sigma_2})$, we get that 
\begin{multline}
    (G[V_B]\mid \parity(\mathcal{Q}_0),\parity(\mathcal{Q}_2), G[A_B],\parity(\mathcal{J}_{A_B}),\parity(V_B),G[\Sigma_B]) \\
    \text{is $e^{-\Theta(n^{1/2-\alpha})}$-close to} \tag{$**$}\\
    (G[V_B]\mid \sum_{S\in \mathcal{Q}_0\cup \mathcal{Q}_2}\parity(S) \text{ (mod } 2), G[A_B],\parity(\mathcal{J}_{A_B}),\parity(V_B),G[\Sigma_B]).
\end{multline}

To deduce \cref{claim:drop-Q0-Q2}, we further remove the parity sum information $\sum_{S\in \mathcal{Q}_0\cup \mathcal{Q}_2}\parity(S)\pmod 2$ via the following observation.

\begin{observation}\label{obs:parity-Q0-Q2}
    The parity $\sum_{S\in \mathcal{Q}_0\cup \mathcal{Q}_2}\parity(S)\pmod{2}$ is the same as $\parity(A_B,T_B^Q)$, which is determined by $\parity(\mathcal{J}_{A_B}),G[A_B]$.
\end{observation}
\begin{proof*}
    It follows directly from the definitions that every potential edge in $\Sigma_2$ appears exactly once in $\mathcal{Q}_0$ and exactly once in $\mathcal{Q}_2$. Thus, potential edges in $\Sigma_2$ do not contribute to the parity $\sum_{S\in \mathcal{Q}_0\cup \mathcal{Q}_2}\parity(S)\pmod{2}$. Furthermore, since every potential edge in $\Sigma_1$ is incident to $B^P$ which is disjoint from  $V_B, B^Q, T_B^Q$, we know that potential edges in $\Sigma_1$ do not appear in $\mathcal{Q}_0\cup \mathcal{Q}_2$ and thus do not contribute to the parity sum either.

    It remains to study the contribution of $\Sigma_0=\binom{V_B}{2}\setminus\binom{A_B}{2}=\binom{T_B^Q}{2}\cup S(A_B,T_B^Q)$ (recall that $V_B=A_B\cup T_B^Q$).
    Every potential edge  in $\binom{T_B^Q}{2}$ does not appear in $\mathcal Q_2$ (as $B^Q$ is disjoint from $T_B^Q\subseteq V_B$) and appears twice in $\mathcal{Q}_0$ (as $T_B^Q\subseteq V_B$). Thus, potential edges  in $\binom{T_B^Q}{2}$ do not contribute to $\sum_{S\in \mathcal{Q}_0\cup \mathcal{Q}_2}\parity(S)\pmod{2}$ either. 

    Finally, every potential edge in $S(A_B,T_B^Q)$ does not appear in $\mathcal Q_2$ (as $B^Q$ is disjoint from $A_B,T_B^Q\subseteq V_B$) and appears exactly once in $\mathcal Q_0$. Thus, we have
    \[\sum_{S\in \mathcal{Q}_0\cup \mathcal{Q}_2}\parity(S)=\parity(A_B,T_B^Q)=\sum_{a\in A_B}\parity(a,T_B^Q)\pmod{2}.\]
    
    For every $a\in A_B$, $\parity(a,T_B^Q)=\parity(a,V_B)-\parity(a,A_B)$ is determined by $\parity(\mathcal{J}_{A_B}) $  and $G[A_B]$. The second part of the observation follows.
\end{proof*}
Combining \cref{obs:parity-Q0-Q2} with $(**)$ gives the claim.
\end{proof}

Finally we prove \cref{lem:B_Affected}, which readily follows from combining the previous claims.
\begin{proof}[Proof of \cref{lem:B_Affected}]
    Since $\binom{V_B}{2}\cap \Sigma_B = \varnothing$ (recall \cref{eq:sigma-B}) and $G[A_B]$, $\parity(\mathcal{J}_{A_B})$, $\parity(V_B)$ only depend on $G[V_B]$, we have
\begin{align*}
    (G[V_B]\mid G[A_B],\parity(\mathcal{J}_{A_B}),\parity(V_B))=(G[V_B]\mid G[A_B],\parity(\mathcal{J}_{A_B}),\parity(V_B),G[\Sigma_B]).
\end{align*}
Thus, to show \cref{lem:B_Affected}, it suffices to show that
\begin{align*}
    &(G[V_B]\mid \Gamma_B)
    \text{ is $e^{-\Theta(n^{1/2-\alpha})}$-close to } 
    (G[V_B]\mid G[A_B],\parity(\mathcal{J}_{A_B}),\parity(V_B),G[\Sigma_B]).
\end{align*}
By \cref{obs:IBRtoQ,claim:stars,claim:drop-P-Q1,claim:drop-Q0-Q2}, we know that
\begin{align*}
    (G[V_B]\mid\Gamma_B)
    &=\lpr{G[V_B]\mid \parity(\mathcal{P}),\parity(\mathcal{Q}_0),\parity(\mathcal{Q}_1),\parity(\mathcal{Q}_2), G[A_B],\parity(\mathcal{J}_{A_B}),\parity(V_B),G[\Sigma_B]}
\end{align*}
is $e^{-\Theta(n^{1/2-\alpha})}$-close to
\[(G[V_B]\mid \parity(\mathcal{Q}_0),\parity(\mathcal{Q}_2), G[A_B],\parity(\mathcal{J}_{A_B}),\parity(V_B),G[\Sigma_B]).\]
and is thus $e^{-\Theta(n^{1/2-\alpha})}$-close to 
\[(G[V_B]\mid  G[A_B],\parity(\mathcal{J}_{A_B}),\parity(V_B),G[\Sigma_B]).\qedhere\]
\end{proof}

\section{Upper bound on $f_\alpha(n)$}\label{sec:upperbound-f}

In this section we show \cref{thm:main}. Consider any $p\in(0,1)$ and $0<\alpha<1/2$.
Recall that $f(n):=f_\alpha(n)$ is the maximum of the probability that $G$ is not even-degenerate over all $\alpha$-partially revealed $p$-graphs $G$ with $n$ vertices.

Before proving the exponential upper bound on $f(n)$, we first need to prove that $f(n)$ is bounded away from $1$. This is a technical issue due to the fact that we do not know whether $f(n)$ is monotone decreasing in $n$. Thus, we first prove \cref{lem:intervalBdd,lemma:constBdd}.

\begin{lemma}\label{lem:intervalBdd}
        There exists a constant $N_1\in \NN$ depending only on $p$ and $\alpha$ such $f(n)<1$ holds for all $n\geq N_1$.
    \end{lemma}

    \begin{proof}
        Pick $N_1\in\NN$ such that $3n^{1-2\alpha}+4\leq n$ holds for all $n\geq N_1$.
Suppose $n\geq N_1$.
        Let $G=(V,E)$ be a partially revealed $p$-graph on $n$ vertices with revealed part $A$ with $|A|\leq n^{1-2\alpha}$. It is sufficient to show that, given any instance $H\subseteq \binom{A}{2}$, $s\in\{0,1\}$, and $(s_a)_{a\in A}\in\{0,1\}^{\abs{A}}$, we can construct a specific graph $G_0$ on $V$ that is even-degenerate and $G_0[A]=H$, $\abs{E(G_0)}\equiv s\pmod{2}$, and $\deg_{G_0}(a)\equiv s_a\pmod{2}$ for all $a\in A$. 
        
        By our choice of $N_1$, we have $3|A|+4\leq |V|$ so that there will be enough vertices in the following construction. We construct the graph $G_0$ on $V$ as follows:
        \begin{enumerate}
            \item Start with the prescribed graph $G_0[A]=H$ and $|V|-|A|$ isolated vertices in $V\setminus A$.
            \item Take any $2|A|+4$ vertices from $V\setminus A$ and label them as 
        \[
        \bigcup_{a\in A}\{x_a,y_a\}\cup \{w,t_1,t_2,b\}.
        \]
        \item For every $a\in A$, if $\deg_H(a)\equiv s_a\pmod{2}$, add the edges $ax_a$, $ay_a$; if $\deg_H(a)\not\equiv s_a\pmod{2}$, only add the edge $ax_a$.
        \item For every $a\in A$, add the edge $wx_a$. Add edges $wt_1,wt_2,t_1t_2$. 
        \item If the edge number of the current graph does not agree with the prescribed parity $s$, add the edge $bw$.
        \end{enumerate}
        Let $G_0$ be the graph obtained after (1)--(5). It is clear that $G_0$ agrees with the prescribed information $G_0[A]=H$, $\abs{E(G_0)}\equiv s\pmod{2}$, and $\deg_{G_0}(a)\equiv s_a\pmod{2}$ for all $a\in A$.

        To show that $G_0$ is even-degenerate, take any ordering $A=\{a_1,\dots,a_{|A|}\}$. We sequentially remove $a_1,\dots,a_{|A|}$ from $G_0$ as follows. For $i=1,\dots,|A|$, if the parity of $\deg(a_i)$ in the current graph is odd, then we first remove $x_{a_i}$ (as it has exactly two neighbors $a_i$, $w$) and then remove $a_i$; otherwise, we simply remove $a_i$.

        After this procedure, the remaining edges in the graph form a star centered at $w$ (with possible leaves $b$ and some $x_{a_i}$'s), and a triangle on $w,t_1,t_2$. Now if $w$ currently has even degree, we can remove $w$ and end at a single edge $t_1t_2$; if $w$ currently has odd degree, we can remove $t_1$ and then remove $w$, and end at an empty graph.
    \end{proof}
    
\begin{lemma}\label{lemma:constBdd}
    There exist constants $N_0\in \NN$ and $\delta\in (0,1)$ depending only on $p$ and $\alpha$, such that $f(n) <1- \delta$ for all $n > N_0$.
\end{lemma}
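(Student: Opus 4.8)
The plan is to bootstrap the recursive inequality of \cref{thm:recurrence} starting from the base case supplied by \cref{lem:intervalBdd}, by unwinding the recursion all the way down the recursion tree. The one genuinely delicate point is that we do \emph{not} know $f$ is monotone, so the bound $f(n)\le 1-\delta$ cannot be propagated ``upward'' directly; instead, for a given large $n$ we follow the recursion down to the constant-size regime, keeping careful track of the accumulated $1-e^{-\Theta(n^{1/2-\alpha})}$ factors and showing their product stays bounded away from $0$.

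\textbf{Setup.} First fix a constant $c>0$ (depending only on $p,\alpha$) such that the factor in \cref{thm:recurrence} satisfies $1-e^{-\Theta(n^{1/2-\alpha})}\ge 1-e^{-cn^{1/2-\alpha}}$ for all $n\ge N_0'$, where $N_0'$ is the threshold in that theorem; let $N_1'$ be the threshold in \cref{lem:intervalBdd}. The next step is to choose one more constant $N_2$, depending only on $p$ and $\alpha$, large enough that: (i) $N_2\ge\max(N_0',\,5N_1')$; (ii) for every $n>N_2$ the $o(n)$ slack in \cref{thm:recurrence} is at most $n/20$, so that the set of indices $n'$ over which the maximum is taken is contained in $[n/5,\,n/3]$; and (iii) $\sum_{j\ge 0}e^{-c\,3^{j(1/2-\alpha)}N_2^{1/2-\alpha}}\le \tfrac12$, which is achievable because this sum is finite and tends to $0$ as $N_2\to\infty$ (here we use $1/2-\alpha>0$). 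Applying \cref{lem:intervalBdd} with $N=N_2$ then yields $\delta_1:=\delta_1(N_2)>0$ with $f(n)\le 1-\delta_1$ for all $N_1'\le n\le N_2$; this already handles every $n\in(N_1',N_2]$.

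\textbf{Unwinding the recursion.} Now take any $n>N_2$. Set $n_0=n$, and as long as $n_i>N_2$ let $n_{i+1}$ be a value of $n'$ in $[n_i/4-o(n_i),\,n_i/4+o(n_i)]$ achieving the maximum of $f$ there; by (ii), $n_i/5\le n_{i+1}\le n_i/3$, so the sequence strictly decreases by a factor at least $3$ and reaches the first index $k$ with $n_k\le N_2$, at which point $n_k>N_2/5\ge N_1'$, so $n_k\in[N_1',N_2]$. For each $i<k$ we have $n_i>N_2\ge N_0'$, so \cref{thm:recurrence} applies, and using $1-f(n_{i+1})^2=(1-f(n_{i+1}))(1+f(n_{i+1}))\ge 1-f(n_{i+1})$ it gives $1-f(n_i)\ge(1-e^{-cn_i^{1/2-\alpha}})(1-f(n_{i+1}))$. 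Chaining these and using $1-f(n_k)\ge\delta_1$,
\[
1-f(n)\ \ge\ \Bigl(\prod_{i=0}^{k-1}\bigl(1-e^{-cn_i^{1/2-\alpha}}\bigr)\Bigr)\,\delta_1 .
\]
Since $n_{k-1-j}\ge 3^{j}n_{k-1}>3^{j}N_2$ for $0\le j\le k-1$, condition (iii) bounds $\sum_{i=0}^{k-1}e^{-cn_i^{1/2-\alpha}}\le\tfrac12$, so the product is at least $\tfrac12$ and $1-f(n)\ge\delta_1/2$. Combining with the base case, $f(n)\le 1-\delta_1/2$ for every $n>N_1'$, so the lemma holds with $N_0=N_1'$ and $\delta=\delta_1/4$. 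The main obstacle, as flagged above, is exactly the uniform control of the product of the $1-e^{-\Theta(\cdot)}$ factors across the $\Theta(\log n)$ levels of the recursion; this is what the geometric spacing of the $n_i$ together with choice (iii) of $N_2$ takes care of, and everything else is routine bookkeeping.
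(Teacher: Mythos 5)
Your proof is correct and takes essentially the same route as the paper: drop the square to linearize the recursion, unwind it along the (geometrically decaying) sequence of argmax indices until the constant-size regime of \cref{lem:intervalBdd} is reached, and bound the accumulated product of $(1-e^{-\Theta(\cdot)})$ factors. The only cosmetic difference is how the tail product is controlled---the paper compares it to the convergent infinite product $\prod_{n\ge N}(1-e^{-Kn^{1/2-\alpha}})$ using that the indices are distinct integers $\ge N$, whereas you exploit the explicit factor-of-$3$ decay to bound $\sum_i e^{-cn_i^{1/2-\alpha}}$ by a geometric series and then use $\prod(1-x_i)\ge 1-\sum x_i$; both yield the same uniform lower bound on the product.
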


\begin{proof}
We know from \cref{eq:main-recursion} that there exist $K>0$, $N_2\in\NN$ and $0<\delta_2<1/4$ such that for all $n\geq N_2$, we have
    \begin{align}\label{eqn:recursion}
       1-f(n)\geq \lpr{1-e^{-Kn^{1/2-\alpha}}} \lpr{1-\max_{n'\in [n/4-\delta_2 n ,n/4+\delta_2 n]}f(n')}.
    \end{align}
Note that we have $f(n')$ instead of $f(n')^2$ in the last term, which follows from $f(n')\leq 1$. We set $N_0:=\max\{N_1,N_2\}$ (with $N_1$ as in \cref{lem:intervalBdd}) and pick $N\in\NN$ such that
\begin{itemize}
    \item $\prod_{n=N}^\infty (1-e^{-Kn^{1/2-\alpha}})>1/2$;
    \item $N_0<N/4-\delta_2 N$.
\end{itemize}
The existence of $N$ follows from the fact that $\sum_{n=1}^\infty e^{-Kn^{1/2-\alpha}}$ converges.
By \cref{lem:intervalBdd}, we have $\delta_1:=\min_{n\in [N_0,N]}(1-f(n))>0$.

Define $g:\{N,N+1,\dots\}\to\NN$ by $$g(n)=\text{argmax}_{[n/4-\delta_2 n,n/4+\delta_2 n]}f.$$
For all $n\geq N$, pick the minimum $k\in\NN$ such that $ g^{(k)}(n)< N$. Note that  we have $g^{(k)}\geq N_0$ as $g^{(k-1)}\geq N$.
Thus by \cref{eqn:recursion}, we have
\begin{align*}
    1-f(n)&\geq (1-e^{-Kn^{1/2-\alpha}})(1-f(g(n)))\\
    &\geq (1-e^{-Kn^{1/2-\alpha}})(1-e^{-Kg(n)^{1/2-\alpha}})(1-f(g^{(2)}(n)))\\
    &\geq \dots \\
    &\geq (1-f(g^{(k)}(n)))\prod_{i=0}^{k-1}(1-e^{-K(g^{(i)}(n))^{1/2-\alpha}})\\
    &\geq \delta_1\prod_{n=N}^\infty(1-e^{-Kn^{1/2-\alpha}})> \delta_1/2.
\end{align*}
Here the second last inequality follows from our choice of $N$ and the fact that $N_0\leq g^{(k)}<N$. Setting $\delta=\delta_1/2$ gives the result.
\end{proof}

We can now prove an exponential upper bound on $f(n)$, which immediately implies \cref{thm:main}.

\begin{theorem}
  $f(n) \le e^{-\Omega(n^{1/2-\alpha})}$.
\end{theorem}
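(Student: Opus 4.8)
The plan is to bootstrap the recursion in \cref{eqn:recursion} using the fact that $f(n)$ is now known to be bounded away from $1$ for all large $n$ (\cref{lemma:constBdd}). Fix $\delta\in(0,1)$ and $N_0$ as in \cref{lemma:constBdd}, and recall from \cref{eq:main-recursion} that there are constants $K>0$, $N_2\in\NN$, and $0<\delta_2<1/4$ such that for all $n\geq N_2$,
\[
1-f(n)\ \geq\ \bigl(1-e^{-Kn^{1/2-\alpha}}\bigr)\bigl(1-\max_{n'\in[n/4-\delta_2 n,\,n/4+\delta_2 n]}f(n')^2\bigr).
\]
The key point is the \emph{quadratic} term $f(n')^2$: if $f(n')\le \beta$ on an interval around $n/4$, then $1-f(n)\ge (1-e^{-Kn^{1/2-\alpha}})(1-\beta^2)$, so roughly $f(n)\lesssim \beta^2 + e^{-Kn^{1/2-\alpha}}$. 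Iterating this squaring step drives the bound doubly-exponentially fast toward the ``error floor'' set by the $e^{-K(\cdot)^{1/2-\alpha}}$ terms, which is exactly $e^{-\Theta(n^{1/2-\alpha})}$.

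Concretely, I would first choose a threshold $M\ge \max\{N_0,N_2\}$ large enough that $\delta^2 + e^{-KM^{1/2-\alpha}}\le \delta$ (possible since $\delta<1$ forces $\delta^2<\delta$), so that the bound $f(n)\le\delta$ is ``self-improving'': if $f\le\delta$ on $[m/4-\delta_2 m, m/4+\delta_2 m]$ for some $m\ge M$ then $f(m)\le \delta^2+e^{-Km^{1/2-\alpha}}\le\delta$. Then I would prove by strong induction on $n$ that for all $n\ge M$,
\[
f(n)\ \le\ C\,e^{-c\,n^{1/2-\alpha}}
\]
for suitable constants $c,C>0$ — this requires $c$ small enough (so that $c(n/4)^{1/2-\alpha} = c\,4^{-(1/2-\alpha)} n^{1/2-\alpha}$ and the squaring, which contributes $2c(n/4)^{1/2-\alpha}$, together with $e^{-Kn^{1/2-\alpha}}$, still sits below $Ce^{-cn^{1/2-\alpha}}$) and $C$ large enough to absorb the contribution of base cases $n\in[M, 4M]$ (where $f\le 1$ trivially, or $f\le\delta<1$, which forces $C\ge e^{cn^{1/2-\alpha}}$ on that bounded range, fine since it's a bounded range). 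The induction step: for $n>4M$, every $n'\in[n/4-\delta_2 n, n/4+\delta_2 n]$ satisfies $n'\ge n/4-\delta_2 n\ge M$ and $n'\le n/2 < n$, so the inductive hypothesis gives $f(n')\le C e^{-c(n/4-\delta_2 n)^{1/2-\alpha}}$; plugging into the recursion,
\[
f(n)\ \le\ e^{-Kn^{1/2-\alpha}} + \Bigl(C e^{-c(1/4-\delta_2)^{1/2-\alpha} n^{1/2-\alpha}}\Bigr)^2
\ =\ e^{-Kn^{1/2-\alpha}} + C^2 e^{-2c(1/4-\delta_2)^{1/2-\alpha} n^{1/2-\alpha}},
\]
and one checks this is $\le C e^{-cn^{1/2-\alpha}}$ provided $c$ was chosen with $c \le 2c(1/4-\delta_2)^{1/2-\alpha}$ — wait, that fails since $(1/4-\delta_2)^{1/2-\alpha}<1$ — so the correct constraint is to pick $c$ small so that $2c(1/4-\delta_2)^{1/2-\alpha}\ge c$ is... also false. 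The right move is instead: since $2(1/4-\delta_2)^{1/2-\alpha}$ may be less than $1$, one does not expect a clean single-exponent bound with the same $c$ at every scale directly; rather, one uses that after $O(\log\log)$ iterations of squaring starting from $\delta<1$ the value is below $e^{-n^{1/2-\alpha}}$ — but more simply, one picks $c$ so small that $c \le K$ and $c\le 2c(1/4-\delta_2)^{1/2-\alpha} + (\text{slack from }e^{-Kn^{1/2-\alpha}})$; the cleanest formalization is the two-phase argument below.

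The cleanest write-up, and the one I'd actually carry out: \textbf{Phase 1} (getting below a fixed small constant at every scale): show there is $M_1$ with $f(n)\le 1/4$ for all $n\ge M_1$. Indeed, starting from $f\le\delta$ everywhere past $M$, the recursion gives $f(n)\le \delta^2+e^{-Kn^{1/2-\alpha}}$; iterating $r$ times (always landing in valid ranges, since the interval $[n/4-\delta_2 n, n/4+\delta_2 n]$ stays above $M$ as long as $n\ge 8M$, say) yields $f(n)\le \delta^{2^r} + \sum_{i} e^{-K(\cdot)^{1/2-\alpha}}$; taking $r$ a fixed constant makes $\delta^{2^r}$ tiny, and the geometric-type sum of the error terms is at most a constant times $e^{-K'n^{1/2-\alpha}}$ for a smaller $K'$, which is $<1/8$ once $n$ is large. \textbf{Phase 2}: once $f(n)\le 1/8$ for all $n\ge M_1$, run the induction $f(n)\le e^{-c n^{1/2-\alpha}}$ with $c$ chosen small enough that for $n$ large, $(e^{-c(n/4-\delta_2 n)^{1/2-\alpha}})^2 + e^{-Kn^{1/2-\alpha}}\le e^{-cn^{1/2-\alpha}}$: here one genuinely needs $c$ small so that $c\,n^{1/2-\alpha}$ is dominated; since squaring \emph{doubles} the exponent coefficient ($2c(1/4-\delta_2)^{1/2-\alpha}$ vs.\ $c$), and $2(1/4-\delta_2)^{1/2-\alpha} > 1$ exactly when $\delta_2 < 1/4 - 2^{-1/(1/2-\alpha)}$ which we may arrange by taking $\delta_2$ smaller (we are free to shrink $\delta_2$ in the recursion, as it only weakens the statement), the squaring term is genuinely smaller than $e^{-cn^{1/2-\alpha}}$, and adding the $e^{-Kn^{1/2-\alpha}}$ term is fine for $c<K$. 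Finally, for $n\le M_1$ (or $n\le 8M$, the finitely many base cases not covered), $f(n)\le 1$ is absorbed by choosing the implied constant in $e^{-\Omega(n^{1/2-\alpha})}$ appropriately, i.e.\ writing the bound as $C e^{-cn^{1/2-\alpha}}$ with $C\ge e^{c\cdot 8M}$. The main obstacle — and the only place requiring care — is verifying that the interval $[n/4-\delta_2 n,\,n/4+\delta_2 n]$ produced by the recursion always lies in the range where the inductive hypothesis applies (i.e.\ in $[M_1, n-1]$), which holds for $n$ above a suitable constant, and that shrinking $\delta_2$ (legitimate, since \cref{eq:main-recursion} holds a fortiori for smaller $\delta_2$) makes $2(1/4-\delta_2)^{1/2-\alpha}>1$ so the doubly-exponential contraction beats the additive error term; everything else is routine bookkeeping. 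By \cref{thm:main}'s phrasing this gives $\PP_{G\sim G(n,p)}[G\text{ not even-degenerate}]\le f(n)\le e^{-\Omega(n^{1/2-\alpha})}$ for every $\alpha\in(0,1/2)$, hence (absorbing the range $\alpha\ge 1/2$ trivially) for every $\alpha>0$.
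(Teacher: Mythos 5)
Your proposal is essentially the same as the paper's own proof of this theorem: bootstrap the recursion from \cref{thm:recurrence} using the fact (\cref{lemma:constBdd}) that $f$ is bounded away from $1$; crucially observe that the interval half-width $\delta_2$ (the paper's $c$) must be chosen small enough that $2(\tfrac14-\delta_2)^{1/2-\alpha}>1$, i.e.\ $\delta_2<\tfrac14-(\tfrac14)^{1/(1-2\alpha)}$, so that the squaring step genuinely doubles the exponent and beats the additive $e^{-Kn^{1/2-\alpha}}$ error; and then run an induction $f(n)\le e^{-cn^{1/2-\alpha}}$. Your condition on $\delta_2$ is algebraically identical to the paper's condition on $c$, and the freedom to pick $\delta_2$ as small as you like is exactly the freedom afforded by the $o(n)$ interval in \cref{thm:recurrence} (applied at a high enough threshold). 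The only cosmetic differences are: the paper establishes the intermediate fact $f(n)=o(1)$ by the clean one-line $\limsup$ argument $x\le x^2$, $x<1\Rightarrow x=0$, where you instead iterate the squaring a fixed number of times; and the paper then fixes explicit constants $\veps$, $M$, $K_0=\ln(1/\veps)/M^{1/2-\alpha}$ that make the induction close in one step, whereas you argue more informally about choosing $c$ small and $C$ large. One phrasing to tighten: you justify shrinking $\delta_2$ by saying the recursion ``holds a fortiori for smaller $\delta_2$'' because it ``only weakens the statement''; strictly speaking the recursion with a smaller interval is the \emph{stronger} statement. The correct (and equally easy) justification is that \cref{thm:recurrence} gives the interval $[n/4-o(n),n/4+o(n)]$, so for any constant $\delta_2>0$ and $n$ past a $\delta_2$-dependent threshold the $o(n)$ interval is contained in $[n/4-\delta_2 n,n/4+\delta_2 n]$, whence the $\delta_2$-version follows by monotonicity of the max. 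This is a fix of wording, not of substance; the approach is sound.
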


\begin{proof}
With hindsight, we fix a real number $c$ such that $0<c<\frac{1}{4}-(\frac{1}{4})^{1/(1-2\alpha)}$. Note that since $1/(1-2\alpha)>1$, we have $\frac{1}{4}>(\frac{1}{4})^{1/(1-2\alpha)}$, and hence $c$ exists. Equivalently, we pick $c$ such that $\zeta:=2\cdot(\frac{1}{4}-c)^{\frac{1}{2}-\alpha}-1>0$.
From \cref{thm:recurrence}, we get that there exist $K>0$ and $N_3\in\NN$ such that for all $n\geq N_3$, we have
\begin{align}\label{eqn:recursion-2}
    f(n)\le e^{-Kn^{1/2-\alpha}} + \max_{n'\in [n/4-c n,n/4+c n]}f(n')^2.
\end{align}

We first show that $f(n)=o(1)$. 
\begin{claim}\label{claim:o1}
    For any $\eps>0$, there exists $M_\eps$ such that $f(n)<\eps$ for all $n>M_\eps$.
\end{claim}
\begin{proof*}
Let $x:=\limsup_{n\rightarrow\infty}f(n)$. From \cref{eqn:recursion-2}, we know that $x\leq 0+x^2$. By \cref{lemma:constBdd}, we know that $x<1$. Hence $x=0$.
\end{proof*}

We can now prove the theorem. With hindsight, pick $\veps>0$ such that
\[\veps+\veps^{\zeta}<1.\]
From \cref{claim:o1}, there exists $M_{\veps}$ such that $f(n)<\veps$ for all $n>M_{\veps}$. Now, we pick $M$ large enough such that 
\begin{itemize}
    \item $\ln(\frac{1}{\veps})/M^{1/2-\alpha}<K/2$;
    \item $(1/4-c)M>M_{\veps}$.
\end{itemize}
Let $K_0:=\ln(\frac{1}{\veps})/M^{1/2-\alpha}$, so we have $K_0<K/2$ and $e^{-K_0M^{1/2-\alpha}}=\veps$. We show by induction that 
\begin{align}\label{eq:exp-bound}
    f(n)\leq e^{-K_0n^{1/2-\alpha}}\qquad \text{for all $n\geq M_{\veps}$}.
\end{align}
For the base case, if $M_{\veps}\leq n\leq M$, then we have
\[f(n)< \veps= e^{-K_0M^{1/2-\alpha}}\leq e^{-K_0n^{1/2-\alpha}}.\]
Inductively, suppose $n>M$ and \cref{eq:exp-bound} holds for all $M_{\veps}\leq n'< n-1$. From \cref{eqn:recursion-2}, we have
\begin{align*}
    f(n)&\leq  e^{-Kn^{1/2-\alpha}} + \max_{n'\in [n/4-c n,n/4+c n]}f(n')^2\\
    &\leq e^{-Kn^{1/2-\alpha}}+e^{-2K_0(n/4-cn)^{1/2-\alpha}}\\
    &=e^{-K_0n^{1/2-\alpha}}\lpr{e^{-(K-K_0)n^{1/2-\alpha}}+e^{-\lpr{2(1/4-c)^{1/2-\alpha}-1}K_0n^{1/2-\alpha}}}\\
    &=e^{-K_0n^{1/2-\alpha}}\lpr{e^{-(K-K_0)n^{1/2-\alpha}}+e^{-\zeta K_0n^{1/2-\alpha}}}\\
    &\leq e^{-K_0n^{1/2-\alpha}}\lpr{e^{-K_0M^{1/2-\alpha}}+e^{-\zeta K_0M^{1/2-\alpha}}}\\
    &=e^{-K_0n^{1/2-\alpha}}\lpr{\veps+\veps^{\zeta}}<e^{-K_0n^{1/2-\alpha}}.
\end{align*}
Here, the second inequality follows from the inductive hypothesis and the last inequality follows from $K_0<K/2$. The inductive hypothesis applies because we have $n'>n/4-cn\geq (1/4-c)M>M_{\veps}$. 
This finishes the proof.
\end{proof}

\section*{Acknowledgment}
We would like to thank Lisa Sauermann and Hung-Hsun Hans Yu for helpful discussions. We would also like to thank anonymous referees for carefully reading our manuscript and providing detailed suggestions on improving the paper.

\bibliographystyle{plain}
\bibliography{bib.bib}
\end{document}